\documentclass[a4paper,11pt]{article} 
\setlength{\textwidth}{15.00 cm}		
\usepackage[italian,english]{babel}			
\usepackage{amsmath,amssymb,amsthm}
\usepackage{esint}
\usepackage{amsfonts} 
\usepackage{graphicx} 
\usepackage{braket}
\usepackage{times}				
\usepackage{ifthen}
\usepackage{xspace}
\usepackage{fancyhdr}
\usepackage{mathrsfs}
\usepackage{enumerate}
\usepackage{verbatim}
\usepackage{hyperref}
\usepackage[latin1]{inputenc}

\setlength{\textwidth}{16cm}
\setlength{\textheight}{23cm}
\setlength{\topmargin}{-1cm}
\setlength{\oddsidemargin}{-1mm}
\setlength{\evensidemargin}{-1mm}
\raggedbottom
\usepackage{color} 
\allowdisplaybreaks 
\definecolor{Red}{cmyk}{0,1,1,0.2}

\def\d{b_\Omega}

\def\L{{\rm Lip}}

\def \lo{\mathcal{P}_{m_0}^{\rm Lip}(\Gamma)}

\def \v{V}

\def \dg{\dot{\gamma}}
\def \g{\gamma}
\def \c{\lambda_p}
\def \cp{\lambda_+}
 \def\R{\mathbb R}
 \def \pl{p_\lambda}
 \def \plt{p_{2,\lambda}^{\tau}}
 \def \pln {p_{2,\lambda}^\nu}
 \def\pn{p^\nu}
 
 \def \pt{p^{\tau}}
 \def\pttx{D^\tau_xu(t,x)}
  
  \def\pnd{p^\nu_2}
  \def \qnd{q^\nu_2}
  \def \ptd{p^{\tau}_2}
  \def \qtd{q^{\tau}_2}
 \def \ho{H^{\tau}}

 \def\minimarc{\Gamma^*}
\usepackage{fancyhdr}

\usepackage{fancyhdr}
\newtheorem{definition}{Definition}[section] 
\theoremstyle{definition}

\theoremstyle{remark}
\newtheorem{remark}{Remark}[section]
\theoremstyle{plain}
\newtheorem{theorem}{Theorem}[section]
\newtheorem{lemma}{Lemma}[section]
\newtheorem{proposition}{Proposition}[section]
\newtheorem{corollario}{Corollary}[section]

\numberwithin{equation}{section}

\title{Mean Field Games with state constraints: from mild to pointwise solutions of the PDE system\footnote{This work was partly supported by the University of Rome Tor Vergata (Consolidate the Foundations 2015) and by the Istituto Nazionale di Alta Matematica ``F. Severi'' (GNAMPA 2016 Research Projects). The authors acknowledge the MIUR Excellence Department Project awarded to the Department of Mathematics, University of Rome Tor Vergata, CUP E83C18000100006. The second author is grateful to the Università Italo Francese (Vinci Project 2015). The work was also partly supported by the ANR (Agence Nationale de la Recherche) project ANR-16-CE40-0015-01.}}
\author{{\sc Piermarco Cannarsa}\thanks{%
		Dipartimento di Matematica, Universit\`{a} di Roma ``Tor Vergata" - {\tt cannarsa@mat.uniroma2.it}}\\
	{\sc Rossana Capuani}\thanks{%
		Dipartimento di Matematica, Universit\`{a} di Roma ``Tor Vergata" and CEREMADE, Universit\'e Paris~Dauphine - {\tt capuani@mat.uniroma2.it}}\\ {\sc Pierre Cardaliaguet}\thanks{%
		CEREMADE, Universit\'e Paris Dauphine - {\tt cardaliaguet@ceremade.dauphine.fr}}
	\\}
\date{}
\begin{document}
\maketitle
\begin{abstract}
Mean Field Games with state constraints are differential games with infinitely many agents, each agent facing a  constraint on his state. 
The aim of this paper is to provide a meaning of the PDE system associated with these games, the so-called Mean Field Game system with state constraints. For this, we show a global semiconvavity property of the value function associated with optimal control problems with state constraints. 
\end{abstract}
 \noindent \textit{Keywords}: semiconcave functions, state constraints, mean field games, viscosity solutions.\\
 \noindent \textbf{MSC Subject classifications}: 49L25, 49N60, 49K40,49N90. \\
\section{Introduction}
The theory of Mean Field Games (MFG)  has been developed simultaneously by Lasry and Lions (\cite{8}, \cite{9}, \cite{10}) and by Huang, Malham\'{e} and Caines (\cite{h1}, \cite{h2}) in order to study  differential games with an infinite number of rational players in competition. The simplest MFG models lead to systems of partial differential equations involving two unknown functions: the value function $u=u(t,x)$ of the optimal control problem that a typical player seeks to solve, and the time-dependent density $m=(m(t))$ of the population of players:
\begin{equation}\label{mfg}
(MFG)\qquad
\begin{cases}
-\partial_t u +H(x,Du)=F(x,m) \\
\partial_t m-div(mD_pH(x,Du))=0\\
m(0)=m_0 \ \ \ \ u(x,T)=G(x,m(T)).
\end{cases}
\end{equation}
In the largest part of the literature, this system is 
studied  in the full space $(0,T)\times \R^n$ (or with space periodic data) assuming the time horizon $T$ to be finite. The system can also be associated with Neumann boundary conditions for the $u$-component, which corresponds to reflected dynamics for the players. 

The aim of this paper is to study  system \eqref{mfg} when players are confined in the closure of a bounded domain $\Omega\subset\R^n$---a set-up that arises naturally in applications. For instance,  MFG  appearing in macroeconomic models (the so-called heterogeneous agent models) almost always involve  constraints on the state variables. Indeed, one could even claim that state constraints play  a central role in the analysis since they explain heterogeneity in the economy: see, for instance, Huggett's model of income and wealth distribution~(\cite{abll, ahll}). It is also very natural to introduce constraints in pedestrians MFG models, although this variant of the problem has so far   been discussed just in an informal way. Here again,  constraints are important to explain the behavior of  crowds and it is largely believed that the  analysis of constrained problems should help regulating traffic: see, for instance, \cite{cfsw, cpt} on related issues. However, despite their relevance to applications, a general approach to MFG with state constraints seems to be missing up to now. To the best of our knowledge, the first reference on this subject is the  analysis of Huggett's model in \cite{ahll}. Other contributions are \cite{cc} and \cite{ccc},  on which we comment below. 

One of the main issues in the analysis of MFG models with state constraints is the interpretation of  system \eqref{mfg}. If, on the one hand, the meaning of the Hamilton-Jacobi equation associated with the underlying optimal control problem is well understood (see \cite{So1, So2} and \cite{cdl, ik}), on the other hand this is not the case for the continuity equation. This fact is due to several reasons: first, in contrast with  unconstrained problems, one cannot expect  $m(t)$ to be absolutely continuous with respect to the Lebesgue measure, in general. In fact, for Huggett's model~( \cite{ahll})  measure $m$ {\it always} develops a singular part at the boundary of $\Omega$. Moreover, solutions of Hamilton-Jacobi equations fail to  be of class $C^1$, in general. Thus, the gradient $Du(t,x)$---even when it exists---may develop discontinuities. In addition, the meaning of $Du(t,x)$, when the point $x$ belongs to the boundary of the domain, is totally unclear. For all these reasons, the interpretation of the continuity equation is problematic. 

To overcome the above difficulties,  the first two authors of this paper introduced in \cite{cc} the notion of {\em relaxed MFG equilibrium}. Such an equilibrium is not defined in terms of the MFG system, but follows instead the so-called Lagrangian formulation (see \cite{bb}, \cite{bc}, \cite{bcs}, \cite{b}, \cite{pc}, \cite{cms}). The main result of \cite{cc} is the existence of  MFG equilibria, which holds under fairly general assumptions. In the same paper,  the uniqueness of the solution is also derived  under an adapted version of the Lasry-Lions monotonicity condition~(\cite{10}). 
Once the existence of relaxed equilibria is ensured, the next issue to address should be regularity.  In \cite{cc}, the notion of solution was very general and yields a value function $u$ and a flow of measures $m=(m(t))$ which are merely continuous. However, in \cite{ccc}, we have shown how to improve the construction in \cite{cc} to obtain more regular solutions, that is, pairs $(u,m)$ such $u$ is Lipschitz  on $[0,T]\times \overline{\Omega}$ and the flow of measures $m:[0,T]\to \mathcal{P}(\overline{\Omega})$ is Lipschitz with respect to the Kantorovich-Rubinstein metric on $\mathcal{P}(\overline{\Omega})$, the space of probability measures on $\overline{\Omega}$.  

 However, \cite{cc} and \cite{ccc} leave the open problem of establishing a suitable sense in which the MFG system is satisfied. Such a necessity justifies the search for further regularity properties of generalized solutions. Indeed, despite its importance in nonsmooth analysis, Lipschitz regularity does not suffice to give the MFG system---in particular,   the continuity equation---a satisfactory interpretation.  
A more useful regularity property, in connection with Hamilton-Jacobi equations, is known to be semiconcavity (see, for instance, \cite{cs}). However, even for problems in the calculus of variations or optimal control, very few semiconcavity results are available in the presence of state constraints and, in fact, it was  so far known that global {\em  linear} semiconcavity should not be expected~(\cite{cm}). 

Surprisingly, in this paper we show that global semiconcavity---with a {\em fractional} modulus---does hold true. 
More precisely, denoting by  $u$ the value function of a  constrained problem in the calculus of variation (see Subsection \ref{subsec.NC}),  
we prove that 
\begin{equation*}
\frac{1}{2}u(t+\sigma,x+h)+\frac{1}{2}u(t-\sigma,x-h)-u(t,x)
\leq c(|h|+\sigma)^{\frac{3}{2}}
\end{equation*}
for allt $(t,x)\in [0,T)\times\partial\Omega$ and $h$, $\sigma$ small enough 
	(see Corollary \ref{cor1}). Actually, the above semiconcavity estimate is obtained as a corollary of a sensitivity relation (Theorem~\ref{t1}),
for the proof of which 
 key tools are provided by necessary optimality  conditions in the formulation that was introduced in \cite{ccc}.  

Using the above property,  in this paper we give---for the first time---an interpretation of  system \eqref{mfg} in the presence of state constraints, which goes as follows: 
if  $(u,m)$ is a mild solution of the constrained MFG problem (see Definition \ref{def.mildsol} below), then---as expected---$u$ is a constrained viscosity solution of 
\begin{align*}
\begin{cases}
-\partial_t u+ H(x,Du)=F(x,m(t)) \ \ \ \ \text{in} \ (0,T)\times \overline{\Omega},\\
u(x,T)=G(x,m(T)) \ \ \ \text{in} \ \overline{\Omega},
\end{cases}
\end{align*}
(in the sense of \cite{So1, So2}). Moreover---and this is our main result---there exists a  bounded continuous   vector field $\v:(0,T)\times\overline{\Omega}\rightarrow \R^n$ such that $m$ satisfies the continuity equation
	\begin{align}
	\begin{cases}
	\partial_t m+ div(\v \ m)=0, \ \ &\mbox{in} \ (0,T)\times \overline{\Omega},\\
	m(0,x)=m_0(x), \ \ \ &\mbox{in} \ \overline{\Omega}
	\end{cases}
	\end{align}
 in the sense of distributions. The vector field $\v$ is  related to $u$ in the following way: on the one hand, at any point $(t,x)$ such that $x$ is an interior point belonging to the support of $m(t)$, 
$u$ is differentiable  and  $$V(t,x)=-D_pH(x, Du(t,x)).$$ 
On the other hand, if $x$ is a boundary point on the support of $m(t)$, then one has that
\begin{equation*}
V(t,x)=-D_pH\Bigl(x,\pttx+\lambda_+(t,x)\nu(x)\Bigr),
\end{equation*}
where $\pttx$ is the tangential  component of all elements of the superdifferential of $u$ and   $\lambda_+(t,x)$ is the unique real number $\lambda$ for which $-D_pH(x, \pttx+\lambda \nu(x))$ is tangential to $\Omega$ at $x$ (see Remark~\ref{rem.jeazl}). We also prove that $u$ has time derivative at $(t,x)$ and  $\pttx+\lambda_+(t,x)\nu(x)$ can be interpreted as the correct space derivative of $u$ at $(t,x)$. For instance, we show that the Hamilton-Jacobi equation holds with an equality at any such point, that is,
$$
-\partial_t u(t,x)+ H(x,\pttx+\lambda_+(t,x)\nu(x))=F(x,m(t)),
$$
as is  the case for points of differentiability of the solution in the interior. The continuity of the vector field $\v$ is directly related to the semiconcavity of $u$. Such a rigidity  result  is reminiscent of the reformulation of the notion of viscosity solution of Hamilton-Jacobi equation with state-constraints in terms of flux-limited solutions, as described in the recent papers by Imbert and Monneau~\cite{IM} and   Guerand~\cite{Gu}. 

This paper is organized as follows. In Section 2, we introduce the notation and some preliminary results. In Section 3, under suitable assumptions, we deduce the local fractional semiconcavity of the value function associated to a variational problem with state constraints. In Section 4, we apply our semiconcavity result to constrained MFG problems. In particular, we give a new interpretation of the MFG system in the presence of state constraints. Finally, in the Appendix, we prove a technical result on directional derivatives.

\section{Preliminaries}
Throughout this paper we denote by $|\cdot|$ and $\langle  \cdot  \rangle$ , respectively, the Euclidean norm and scalar product in $\mathbb{R}^n$. We denote by $B_R$ the ball of radius $R>0$ and center $0$. Let $A\in\mathbb{R}^{n\times n}$ be a matrix. We denote by $||\cdot||$ the norm of $A$ defined as follows
$$
||A||=\max_{|x|=1} |Ax| \ \ \ \ (x\in \mathbb{R}^n).
$$
For any subset $S \subset \mathbb{R}^n$, $\overline{S}$ stands for its closure, $\partial S$ for its boundary, and $S^c$ for $\mathbb{R}^n\setminus S$. We denote by $\mathbf{1}_{S}:\mathbb{R}^n\rightarrow \{0,1\}$ the characteristic function of $S$, i.e.,
\begin{align*}
\mathbf{1}_{S}(x)=
\begin{cases}
1  \ \ \ &x\in S,\\
0 &x\in S^c.
\end{cases}
\end{align*}
We write $AC(0,T;\mathbb{R}^n)$ for the space of all absolutely continuous $\mathbb{R}^n$-valued functions on $[0,T]$, equipped with the uniform norm $||\gamma||_\infty ={\rm sup}_{[0,T]}\ |\gamma(t)|$. We observe that $AC(0,T;\mathbb{R}^n)$ is not a Banach space.\\
Let $U$ be an open subset of $\mathbb{R}^n$. $C(U)$ is the space of all continuous functions on $U$ and $C_b(U)$ is the space of all bounded continuous functions on $U$. $C^k(U)$ is the space of all functions $\phi:U\rightarrow\mathbb{R}$ that are k-times continuously differentiable. Let $\phi\in C^1(U)$. The gradient vector of $\phi$ is denoted by $D\phi=(D_{x_1}\phi,\cdots , D_{x_n}\phi)$, where $D_{x_i}\phi =\frac{\partial \phi}{\partial x_i}$. Let $\phi \in C^k(U)$ and let $\alpha=(\alpha_1,\cdots,\alpha_n) \in \mathbb{N}^n$ be a multiindex. We define $D^\alpha \phi=D^{\alpha_1}_{x_1}\cdots D^{\alpha_n}_{x_n}\phi$.
$C^k_b(U)$ is the space of all function $\phi\in C^k(U)$ and such that
$$
\|\phi\|_{k,\infty}:=\sup_{\tiny\begin{array}{c}
	x\in U\\ 
	|\alpha|\leq k
	\end{array}} |D^\alpha\phi(x)|<\infty
$$
Throughout the paper,  $\Omega$ is a bounded open subset of $\mathbb{R}^n$ with $C^2$ boundary. $C^{1,1}(\overline{\Omega})$ is the space of all the functions $C^1$ in a neighborhood $U$ of $\Omega$ and with locally Lipschitz continuous first order derivatives.\\
The distance function from  $\overline{\Omega}$ is the function $d_\Omega :\mathbb{R}^n \rightarrow [0,+ \infty[$ defined by
\begin{equation*}
d_\Omega(x):= \inf_{y \in \overline{\Omega}} |x-y| \ \ \ \ \ (x\in \mathbb{R}^n).
\end{equation*}
We define the oriented boundary distance from $\partial \Omega$ by
\begin{equation*}
\d(x)=d_\Omega(x) -d_{\Omega^c}(x) \ \ \ \ (x\in\mathbb{R}^n).
\end{equation*}
We recall that, since the boundary of $\Omega$ is of class $C^2$, there exists $\rho_0>0$ such that
\begin{equation}\label{dn}
\d(\cdot)\in C^2_b \ \ \text{on} \ \ \Sigma_{\rho_0}=\Big\{y\in B(x,\rho_0): x\in \partial\Omega\Big\}.
\end{equation}
Throughout the paper, we suppose that $\rho_0$ is fixed so that \eqref{dn} holds.\\
Let $X$ be a separable metric space. $C_b(X)$ is the space of all bounded continuous functions on $X$. We denote by $\mathscr{B}(X)$ the family of the Borel subset of $X$ and by $\mathcal{P}(X)$ the family of all Borel probability measures on $X$. The support of $\eta \in \mathcal{P}(X)$, $supp(\eta)$, is the closed set defined by
 \begin{equation*}
 supp (\eta) := \Big \{x \in X: \eta(V)>0\ \mbox{for each neighborhood V of $x$}\Big\}.
 \end{equation*}
 We say that a sequence $(\eta_i)\subset \mathcal{P}(X)$ is narrowly convergent to $\eta \in \mathcal{P}(X)$ if
 \begin{equation*}
 \lim_{i\rightarrow \infty} \int_X f(x)\,d\eta_i(x)=\int_X f(x) \,d\eta \ \ \ \ \forall f \in C_b(X).
 \end{equation*}
 We denote by $d_1$ the Kantorovich-Rubinstein distance on $X$, which---when $X$ is compact---can be characterized as follows 
 \begin{equation}\label{2.7}
 d_1(m,m')=sup\Big\{\int_X f(x)\,dm(x)-\int_X f(x)\,dm'(x)\ \Big|\ f:X\rightarrow\mathbb{R}\ \ \mbox{is 1-Lipschitz} \Big\}, 
 \end{equation}
 for all $m, m'\in\mathcal{P}(X)$.\\
 We write $\L(0,T;\mathcal{P}(\overline{\Omega}))$ for the space of all maps $m:[0,T]\rightarrow \mathcal{P}(\overline{\Omega})$ that are Lipschitz continuous with respect to $d_1$, i.e.,
 \begin{equation}\label{lipm}
 d_1(m(t),m(s))\leq C|t-s|, \ \ \ \ \forall t,s\in[0,T],
 \end{equation}
 for some constant $C\geq 0$. We denote by $\L(m)$ the smallest constant that verifies \eqref{lipm}.
 \subsection{ Semiconcave functions and generalized gradients}
 \begin{definition}
 	We say that $\omega:\R_+\rightarrow \R_+$ is a modulus if it is a nondecreasing upper semicontinuous function such that $\displaystyle\lim_{r\rightarrow 0^+} \omega(r)=0$.
 \end{definition}
 \begin{definition}
Let $\omega:\R_+\rightarrow \R_+$ be a modulus.
We say that a function $u: \overline{\Omega}\rightarrow \R$ is semiconcave with modulus $\omega$ if 
\begin{align}\label{semicon}
 	\lambda u(x)+(1-\lambda)u(y)-u(\lambda x+(1-\lambda)y)\leq \lambda(1-\lambda)|x-y| \omega(|x-y|)
 	\end{align}
 	for any pair $x$, $y\in \overline{\Omega}$, such that the segment $[x,y]$ is contained in $\overline{\Omega}$ and for any $\lambda\in[0,1]$. We call $\omega$ a modulus of semiconcavity for $u$ in $\overline{\Omega}$.
 \end{definition}
 \noindent
 A function $u$ is called semiconvex in $\overline{\Omega}$ if $-u$ is semiconcave.\\
 When the right-side of \eqref{semicon} is replaced by a term of form $C|x-y|^2$ we say that $u$ is semiconcave with linear modulus.\\
 For any $x\in \overline{\Omega}$, the sets
 \begin{eqnarray}
 D^{-}u(x) &=& \Big\{ p\in \mathbb{R}^n: \liminf_{\tiny\begin{array}{c}
 	y\rightarrow x\\
 	y\in\overline{\Omega}
 	\end{array} } \frac{u(y)-u(x)-\langle p,y-x\rangle}{|y-x|}\geq 0\Big\}\\
 D^{+}u(x) &=& \Big\{p\in \mathbb{R}^n:\limsup_{\tiny\begin{array}{c}
 	y\rightarrow x\\
 	y\in\overline{\Omega}
 	\end{array} } \frac{u(y)-u(x)-\langle p,y-x\rangle}{|y-x|}\leq 0\Big\}
 \end{eqnarray}
 are called, respectively, the (Fr\'{e}chet) subdifferential and superdifferential of $u$ at $x$.\\
 We note that if $x\in \Omega$ then, $D^+u(x)$, $D^-u(x)$ are both nonempty if and only if $u$ is differentiable in $x$. In this case we have that 
 $$
 D^+u(x)=D^-u(x)=\{Du(x)\}.
 $$
 \begin{proposition}\label{p5}
Let $u$ be a real-valued function defined on $\overline{\Omega}$. Let $x\in\partial \Omega$ and let $\nu(x)$ be the outward unit normal vector to $\partial \Omega$ in $x$. If $p \in D^+u(x)$, $\lambda\leq 0$ then $p+\lambda\nu(x)$ belongs to $D^+u(x)$ for all $\lambda\leq 0$.
 \end{proposition}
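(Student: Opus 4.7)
The plan is to use the smoothness of the boundary distance function $b_\Omega$, together with the fact that for $y \in \overline{\Omega}$ near a boundary point $x$ the inner product $\langle \nu(x), y - x\rangle$ is controlled by a higher-order term, so that adding $\lambda \nu(x)$ with $\lambda \le 0$ to a supergradient does not destroy the defining $\limsup$ inequality.

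More concretely, I would first recall from \eqref{dn} that $b_\Omega \in C^2_b$ in a neighborhood $\Sigma_{\rho_0}$ of $\partial\Omega$, that $b_\Omega(x) = 0$ for $x \in \partial\Omega$, and that $\nabla b_\Omega(x) = \nu(x)$ at boundary points. A first-order Taylor expansion then gives, for $y \in \overline{\Omega}$ sufficiently close to $x$,
\begin{equation*}
\langle \nu(x), y - x\rangle = b_\Omega(y) - b_\Omega(x) + O(|y-x|^2) = b_\Omega(y) + O(|y-x|^2) \leq C|y-x|^2,
\end{equation*}
where the last inequality uses $b_\Omega(y) \le 0$ for $y \in \overline{\Omega}$ and $C$ depends only on $\|b_\Omega\|_{2,\infty}$ on $\Sigma_{\rho_0}$.

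Next, I would write out the definition of the superdifferential applied to $p + \lambda \nu(x)$ and split the quotient:
\begin{equation*}
\frac{u(y) - u(x) - \langle p + \lambda\nu(x), y - x\rangle}{|y - x|}
= \frac{u(y) - u(x) - \langle p, y - x\rangle}{|y - x|}
\; + \; (-\lambda)\,\frac{\langle \nu(x), y - x\rangle}{|y - x|}.
\end{equation*}
Taking $\limsup$ as $y \to x$ with $y \in \overline{\Omega}$, the first term is $\le 0$ since $p \in D^+u(x)$, and for the second term the estimate above combined with $-\lambda \ge 0$ yields
\begin{equation*}
(-\lambda)\,\frac{\langle \nu(x), y - x\rangle}{|y - x|} \leq (-\lambda)\, C\, |y - x| \;\longrightarrow\; 0.
\end{equation*}
Hence the whole $\limsup$ is $\le 0$, which is precisely the definition of $p + \lambda \nu(x) \in D^+u(x)$.

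There is no real obstacle here; the only point one must be careful about is the direction of the inequality $\langle \nu(x), y-x\rangle \le C|y-x|^2$, which crucially uses that $\nu$ is the \emph{outward} normal and that $y$ ranges in $\overline{\Omega}$ (so $b_\Omega(y) \le 0$). The sign of $\lambda$ then makes the extra term in the quotient nonpositive up to a vanishing error, and the conclusion follows.
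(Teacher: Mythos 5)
Your proposal is correct and takes essentially the same approach as the paper: the same decomposition of the difference quotient into the $D^+u(x)$ term plus the $(-\lambda)\langle\nu(x),y-x\rangle$ term, followed by taking $\limsup$. The only difference is that you make explicit, via a Taylor expansion of $b_\Omega$ near $\partial\Omega$, the estimate $\langle\nu(x),y-x\rangle\le C|y-x|^2$ for $y\in\overline{\Omega}$, which the paper's one-line proof leaves implicit; this is a worthwhile clarification since $\langle\nu(x),y-x\rangle$ need not be nonpositive for nonconvex $\Omega$.
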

 \begin{proof}
 	Let $x\in \partial \Omega$ and let $\nu(x)$ be the outward unit normal vector to $\partial \Omega$ in $x$. Let $p \in D^+u(x)$. Let us take $\lambda\leq0$ and $y\in \overline{\Omega}$. Since $p \in D^+u(x)$ and $\lambda\leq0$, one has that
 	\begin{align*}
 	u(y)-u(x)-\langle p+\lambda\nu(x), y-x\rangle= u(y)-u(x)-\langle p, y-x\rangle-\lambda \langle \nu(x),y-x\rangle\leq o(|y-x|).
 	\end{align*}
 	Hence, $p+\lambda\nu(x)$ belongs to $D^+u(x)$.
 \end{proof}
 \noindent
 $D^+u(x)$, $D^-u(x)$ can be described in terms of test functions as shown in the next lemma.
 \begin{proposition}\label{17}
 	Let $u\in C(\overline{\Omega})$, $p\in \R^n$, and $x\in \overline{\Omega}$. Then the following properties are equivalent:
 	\begin{enumerate}
 		\item[(a)] $p \in D^+ u(x)$ (resp. $p\in D^-u(x)$);
 		\item[(b)] $p=D\phi(x)$ for some function $\phi\in C^1(\R^n)$ touching $u$ from above (resp. below);
 		\item[(c)] $p=D\phi(x)$ for some function $\phi\in C^1(\R^n)$ such that $f-\phi$ attains a local maximum (resp. minimum) at $x$.
 	\end{enumerate}
 \end{proposition}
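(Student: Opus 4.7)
The plan is to prove the cyclic chain (b)$\Rightarrow$(c)$\Rightarrow$(a)$\Rightarrow$(b) for the superdifferential statement; the subdifferential version then follows by applying the superdifferential one to $-u$ and $-p$. The implication (b)$\Rightarrow$(c) is immediate, since a test function $\phi$ touching $u$ from above at $x$ means by definition $\phi(x)=u(x)$ and $\phi\ge u$ near $x$, so $u-\phi$ attains a (local) maximum at $x$. For (c)$\Rightarrow$(a), if $u-\phi$ has a local maximum at $x$, then for $y\in\overline{\Omega}$ near $x$ the first-order expansion of $\phi$ gives
$$
u(y)-u(x)\le \phi(y)-\phi(x)=\langle p,y-x\rangle+o(|y-x|);
$$
dividing by $|y-x|$ and taking $\limsup$ as $y\to x$ in $\overline{\Omega}$ yields $p\in D^+u(x)$.

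The substance is (a)$\Rightarrow$(b). Starting from $p\in D^+u(x)$, I would first quantify the rate in the defining $\limsup$ by setting
$$
\sigma(r)=\sup\left\{\frac{\bigl[u(y)-u(x)-\langle p,y-x\rangle\bigr]^+}{|y-x|}\ :\ y\in\overline{\Omega},\ 0<|y-x|\le r\right\}
$$
for $r>0$, with $\sigma(0)=0$. Continuity of $u$ on the compact set $\overline{\Omega}$ makes $\sigma$ finite, it is nondecreasing by construction, and the defining condition of $D^+u(x)$ forces $\sigma(r)\to 0$ as $r\to 0^+$. The key step is to produce a $C^1$ function $\rho:[0,\infty)\to[0,\infty)$ with $\rho(0)=\rho'(0)=0$ and $\rho(r)\ge r\sigma(r)$ for all sufficiently small $r$. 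A standard device is to first majorize $\sigma$ by a continuous nondecreasing modulus $\omega$ (for instance by averaging, $\omega(r)=\frac{1}{r}\int_r^{2r}\sigma(s)\,ds+r$ for $r>0$ and $\omega(0)=0$, which inherits monotonicity from $\sigma$ and satisfies $\omega\ge\sigma$) and then integrate, e.g.\ $\rho(r)=\int_0^{2r}\omega(s)\,ds$. This $\rho$ is $C^1$ by continuity of $\omega$, satisfies $\rho'(0)=2\omega(0)=0$, and the monotonicity of $\omega$ gives $\rho(r)\ge\int_r^{2r}\omega(s)\,ds\ge r\omega(r)\ge r\sigma(r)$.

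With such $\rho$ in hand, set $\phi(y)=u(x)+\langle p,y-x\rangle+\rho(|y-x|)$. Then $\phi\in C^1(\R^n)$: away from $x$ this is clear, while at $x$ the condition $\rho'(0)=0$ guarantees both $D\phi(x)=p$ and continuity at $x$ of the gradient $p+\rho'(|y-x|)(y-x)/|y-x|$. Moreover $\phi(x)=u(x)$ and $\phi\ge u$ in a neighborhood of $x$ follow from the bound $\rho(r)\ge r\sigma(r)$, which is exactly (b) in the local sense. For a globally dominating test function, one can multiply the remainder $\rho(|y-x|)$ by a smooth cutoff equal to $1$ near $x$ and add a sufficiently large constant outside a compact set, preserving $C^1$ regularity as well as the identities $\phi(x)=u(x)$ and $D\phi(x)=p$. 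The main technical point---and the step that could trip a reader---is precisely this $C^1$ regularization: turning a generic nondecreasing modulus $\sigma$ into a $C^1$ remainder that simultaneously satisfies $\rho'(0)=0$ and the majorization $\rho(r)\ge r\sigma(r)$. The remaining implications amount to unpacking the definitions.
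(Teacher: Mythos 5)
Your proof is correct and follows essentially the same route as the paper: reduce everything to the implication (a)$\Rightarrow$(b), quantify the definition of $D^+u(x)$ by a modulus $\sigma$, regularize $\sigma$ by iterated averaging to build a $C^1$ remainder $\rho$ with $\rho(0)=\rho'(0)=0$ and $\rho(r)\ge r\sigma(r)$, and take $\phi(y)=u(x)+\langle p,y-x\rangle+\rho(|y-x|)$. The paper packages the regularization in a separate lemma for a generic upper semicontinuous modulus $w$, which requires a preliminary monotone envelope $\overline w(r)=\max_{\rho\in(0,r]}w(\rho)$ before the two averaging steps $w_0=\frac1r\int_r^{2r}\overline w$ and $w_1=\frac1r\int_r^{2r}w_0$, whereas your $\sigma$ is already nondecreasing by construction (supremum over balls of radius $\le r$), so you skip the envelope step; you also replace the form $\xi(r)=rw_1(r)=\int_r^{2r}w_0$ by the primitive $\rho(r)=\int_0^{2r}\omega$, which gives the same $C^1$ regularity with $\rho'(r)=2\omega(2r)$. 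Both arrangements are equivalent, and the added ``$+r$'' in your $\omega$ and the concluding cutoff remark are harmless extras not needed for the local statement proved in the paper.
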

 \noindent
 In the proof of Proposition \ref{17} it is possible to follow the same method of \cite[Proposition 3.1.7]{cs}. The following statements are straightforward extensions to the constrained case of classical results: we refer again to \cite{cs} for a proof.
 \begin{proposition}\label{pp1}
 	Let $u:\overline{\Omega}\rightarrow \R$ be semiconcave with modulus $\omega$ and let $x\in\overline{\Omega}$. Then, a vector $p\in\R^n$ belongs to $D^+u(x)$ if and only if 
 	\begin{equation}\label{pp}
 	u(y)-u(x)-\langle p,y-x\rangle\leq |y-x|\omega(|y-x|)
 	\end{equation} 
 	for any point $y\in\overline{\Omega}$ such that $[y,x]\subset \overline{\Omega}$.
 \end{proposition}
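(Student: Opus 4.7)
The plan is to prove the two implications separately. The ``if'' direction is essentially immediate: since $\omega$ is a modulus, the bound in \eqref{pp} yields $u(y)-u(x)-\langle p,y-x\rangle\leq o(|y-x|)$ as $y\to x$ along points $y\in\overline{\Omega}$ with $[x,y]\subset\overline{\Omega}$, which matches the $\limsup$ condition defining $D^+u(x)$. To extend this to arbitrary approaches $y\to x$ in $\overline{\Omega}$ (relevant at boundary points), one uses the $C^2$ regularity of $\partial\Omega$: given $y\in\overline{\Omega}$ close to $x$, a nearby $y'\in\overline{\Omega}$ with $[x,y']\subset\overline{\Omega}$ can be chosen at distance $O(|y-x|^2)$ from $y$, and the Lipschitz character of the semiconcave $u$ absorbs this higher-order correction into the remainder.

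The substantive direction is ``only if''. The strategy is to transport the infinitesimal first-order information at $x$ (encoded by $p\in D^+u(x)$) along the segment $[x,y]$, using the semiconcavity inequality to pay for the displacement. Fix $y\in\overline{\Omega}$ with $[x,y]\subset\overline{\Omega}$ and, for $t\in(0,1]$, introduce the interpolant $z_t=(1-t)x+ty\in\overline{\Omega}$. Applying the semiconcavity inequality with $\lambda=1-t$ to the pair $(x,y)$ and rearranging the resulting estimate gives, for every such $t$,
\begin{equation*}
u(y)-u(x)\;\leq\;\frac{u(z_t)-u(x)}{t}\;+\;(1-t)\,|y-x|\,\omega(|y-x|).
\end{equation*}

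The final step is to let $t\to 0^+$ and control the difference quotient through the hypothesis $p\in D^+u(x)$. Since $|z_t-x|=t|y-x|$ and $z_t\to x$ within $\overline{\Omega}$, the definition of the superdifferential yields
\begin{equation*}
\limsup_{t\to 0^+}\frac{u(z_t)-u(x)-t\langle p,y-x\rangle}{t\,|y-x|}\;\leq\;0,
\end{equation*}
hence $\limsup_{t\to 0^+}(u(z_t)-u(x))/t\leq\langle p,y-x\rangle$. Passing to the $\limsup$ as $t\to 0^+$ in the previous display and noting that $(1-t)|y-x|\omega(|y-x|)\to|y-x|\omega(|y-x|)$ then produces exactly \eqref{pp}.

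The only feature specific to the constrained setting is the requirement $[x,y]\subset\overline{\Omega}$, which is used twice in the ``only if'' direction: to ensure that the interpolants $z_t$ lie in $\overline{\Omega}$ so that the semiconcavity inequality applies to them, and so that the sequence $z_t\to x$ is admissible in the $\limsup$ defining the constrained superdifferential at $x$. This mild obstacle is already built into the statement, and the remainder of the argument is a routine combination of semiconcavity with the first-order upper bound encoded by $D^+u(x)$, just as in the classical interior case.
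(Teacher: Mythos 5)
Your proof is correct and follows the same route as the classical argument in Cannarsa--Sinestrari, to which the paper defers for this proposition. The ``only if'' direction is exactly the standard one: apply the semiconcavity inequality to the interpolant $z_t=(1-t)x+ty$, divide by $t$, and pass to $\limsup$ using the definition of $D^+u(x)$ along the admissible sequence $z_t\to x$. The ``if'' direction is where the constrained setting genuinely differs from the open case (where $[x,y]\subset A$ automatically for $y$ near $x$), and you correctly identify the needed geometric step at boundary points: perturb $y$ by $O(|y-x|^2)$ in the inward normal direction so that $[x,y']\subset\overline{\Omega}$, using $C^2$ regularity of $\partial\Omega$. One small point worth making explicit is that you invoke the ``Lipschitz character of the semiconcave $u$'' to absorb that $O(|y-x|^2)$ correction; the statement does not assume $u$ Lipschitz, so this relies on the (true, but unstated) fact that a function semiconcave on $\overline{\Omega}$ in the constrained sense is locally Lipschitz near any point of $\overline{\Omega}$ --- a straightforward extension of \cite[Theorem 2.1.7]{cs} using the $C^2$ boundary, but one that should be flagged rather than taken silently.
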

A direct consequence of Proposition \ref{pp1} is the following result.
 \begin{proposition}\label{pp2}
 	Let $u:\overline{\Omega}\rightarrow \R$ be a semiconcave function with modulus $\omega$ and let $x \in \overline{\Omega}$. Let $\{x_k\}\subset \overline{\Omega}$ be a sequence converging to $x$ and let $p_k\in D^+u(x_k)$. If $p_k$ converges to a vector $p\in\R^n$, then $p\in D^+u(x)$.
 \end{proposition}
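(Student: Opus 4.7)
The plan is to combine the semiconcavity characterization of the superdifferential given by Proposition \ref{pp1} with a routine passage to the limit, using only the continuity of $u$ (which follows from the local Lipschitz regularity of semiconcave functions, cf.\ \cite{cs}). So first I would invoke Proposition \ref{pp1} at each $x_k$: the assumption $p_k\in D^+u(x_k)$ combined with semiconcavity yields
$$
u(z)-u(x_k)-\langle p_k,z-x_k\rangle \leq |z-x_k|\,\omega(|z-x_k|)
$$
for every $z\in\overline{\Omega}$ such that $[z,x_k]\subset\overline{\Omega}$. The goal is then to show that the analogous inequality holds at $(x,p)$ for an arbitrary $y\in\overline{\Omega}$ with $[y,x]\subset\overline{\Omega}$, since by the converse direction of Proposition \ref{pp1} this will force $p\in D^+u(x)$.

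Second, fix such a $y$. I would construct a sequence $y_k\to y$ with $[y_k,x_k]\subset\overline{\Omega}$, apply the inequality above with $z=y_k$, and pass to the limit as $k\to\infty$. Since $u$ is continuous, $p_k\to p$, $x_k\to x$, and $y_k\to y$, and $\omega$ is upper semicontinuous, the limit reads
$$
u(y)-u(x)-\langle p,y-x\rangle \leq |y-x|\,\omega(|y-x|),
$$
which is exactly the condition required by Proposition \ref{pp1} for $p\in D^+u(x)$.

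The one delicate point, which I expect to be the main obstacle, is the construction of admissible test points $y_k$: when $x\in\partial\Omega$, the segment $[y,x_k]$ need not lie in $\overline{\Omega}$ even though $[y,x]$ does, because the condition $[\,\cdot\,,\,\cdot\,]\subset\overline{\Omega}$ is not stable under perturbation of the endpoints. I would handle this by perturbing $y$ slightly toward the interior using the $C^2$ regularity of $\partial\Omega$: the signed distance $\d$ is $C^2$ in the tubular neighborhood $\Sigma_{\rho_0}$ of \eqref{dn}, so one may set $y_k = y - \varepsilon_k\, D\d(y')$ for a suitable boundary projection $y'$ of $y$ (when $y\in\partial\Omega$, otherwise keep $y_k=y$), choosing a sequence $\varepsilon_k\downarrow 0$ slowly enough that the segment $[y_k,x_k]$ is contained in $\overline{\Omega}$ for all large $k$. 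A diagonal argument then recovers $y$ in the limit and preserves the inequality, thanks to the continuity of $u$ and the fact that $|y_k-x_k|\to|y-x|$.
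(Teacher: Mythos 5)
Your overall strategy is the one the paper has in mind: Proposition~\ref{pp2} is stated as a ``direct consequence of Proposition~\ref{pp1},'' and indeed the intended argument is precisely to apply the inequality \eqref{pp} at each $x_k$ with $p_k$ and pass to the limit using continuity of $u$, the convergence $p_k\to p$, and upper semicontinuity of $\omega$. You have also correctly put your finger on the only genuine subtlety in the constrained setting, namely that the condition $[z,x_k]\subset\overline\Omega$ is not stable under perturbation of $x_k$.

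The specific fix you propose, however, is incomplete. You perturb $y$ only when $y\in\partial\Omega$, but the segment $[y,x_k]$ can exit $\overline\Omega$ even when $y$ is an interior point: this happens whenever $[y,x]$ is tangent to $\partial\Omega$ at some intermediate point or at the endpoint $x$ itself, because a small displacement of $x$ to $x_k$ then pushes the middle of the segment across the boundary. A concrete instance: $\Omega$ an annulus $\{1<|z|<2\}$ in $\R^2$, $x=(1,0)\in\partial\Omega$, $y=(1,0.1)\in\Omega$, and $x_k=(\cos(1/k),-\sin(1/k))$; one checks that $[y,x_k]$ dips inside the unit disk although $[y,x]\subset\overline\Omega$. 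Your scheme leaves $y_k=y$ in this case, so the inequality of Proposition~\ref{pp1} cannot be invoked at $x_k$ with $z=y$. The remedy is to perturb $y$ \emph{unconditionally}, e.g.\ replacing it by $y_k = y - \varepsilon_k\,\nu(x)$ (pushing along the inward normal at $x$, which for $y$ near $x$ and the segment contained in $\Sigma_{\rho_0}$ drives the whole segment strictly into $\Omega$ since $\langle D\d(z),\nu(x)\rangle$ is bounded below near $x$), with $\varepsilon_k$ chosen to dominate the $O(|x_k-x|)$ perturbation. It also helps to observe that, since $D^+u(x)$ is a local notion, it suffices to verify the inequality of Proposition~\ref{pp1} for $y$ in a small neighborhood of $x$, which keeps the whole segment inside $\Sigma_{\rho_0}$ where $\d$ is $C^2$ and the above estimate is uniform. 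With that adjustment the limit passage you describe goes through and establishes $p\in D^+u(x)$ as claimed.
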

 \begin{remark}
 	If the function $u$ depends on $(t,x)\in (0,T)\times \overline{\Omega}$, for some $T>0$, it is natural to consider the generalized partial differentials with respect to $x$ as follows
 	\begin{equation*}
 	D^+_x u(t,x):=\left\{\eta\in\mathbb{R}^n: \limsup_{h\rightarrow 0} \frac{u(t, x + h) - u(t, x) - \langle \eta,h\rangle}{h}\leq 0\right\}.
 	\end{equation*}
 \end{remark}
 \subsubsection{Directional derivatives}\label{derivd} 
 Let $\Omega$ be a bounded open subset of $\R^n$ with $C^2$ boundary. Let us first recall the definition of contingent cone.
 \begin{definition}
 	Let $x\in \overline{\Omega}$ be given. The contingent cone (or Bouligand's tangent cone) to $\Omega$ at $x$ is the set
 	\begin{equation*}
 	T_{\overline{\Omega}}(x)=\Big\{\lim_{i\rightarrow \infty} \frac{x_i-x}{t_i}: x_i\in \Omega, x_i\rightarrow x, \ t_i\in \R_+, \ t_i\downarrow 0\Big\}.
 	\end{equation*}
 \end{definition}
 \begin{remark}
 	Since $\Omega$ is a bounded open subset of $\R^n$ with $C^2$ boundary, then
 	\begin{align*}
 	\mbox{if} \ \ \ x\in \Omega \ \ &\Rightarrow \ \ T_{\overline{\Omega}}(x)=\R^n,\\
 	\mbox{if} \ \ \ x\in\partial \Omega &\Rightarrow \ \ T_{\overline{\Omega}}(x)=\Big\{\theta\in \R^n: \langle \theta,\nu(x)\rangle \leq 0\Big\},
 	\end{align*}
 	where $\nu(x)$ is the outward unit normal vector to $\partial\Omega$ in $x$.
 \end{remark}
 \begin{definition}
 	Let $x\in \overline \Omega$ and $\theta\in T_{\overline{\Omega}}(x)$. The upper and lower Dini derivatives of $u$ at $x$ in direction $\theta$ are defined as
 	\begin{equation}
 	\partial^{\uparrow}u(x;\theta)=\limsup_{\tiny\begin{array}{c}
 		h\rightarrow 0^+\\
 		\theta'\rightarrow \theta\\
 		x+h\theta'\in \overline{\Omega}
 		\end{array}} \frac{u(x+h\theta')-u(x)}{h}	
 	\end{equation}
 	and
 	\begin{equation}
 	\partial^{\downarrow}u(x;\theta)=\liminf_{\tiny\begin{array}{c}
 		h\rightarrow 0^+\\
 		\theta'\rightarrow \theta\\
 		x+h\theta'\in \overline{\Omega}
 		\end{array}} \frac{u(x+h\theta')-u(x)}{h},	
 	\end{equation}
 	respectively.\\
 	The one-sided derivative of $u$ at $x$ in direction $\theta$ is defined as
 	\begin{equation}
 	\partial^+_\theta u(x)=\lim_{\tiny\begin{array}{c}
 		h\rightarrow 0^+\\
 		\theta'\rightarrow \theta\\
 		x+h\theta'\in \overline{\Omega}
 		\end{array}} \frac{u(x+h\theta')-u(x)}{h}
 	\end{equation}
 \end{definition}
\noindent
 Let $x\in \partial \Omega$ and let $\nu(x)$ be the outward unit normal vector to $\partial\Omega$ in $x$. In the next result, we show that any semiconcave function admits one-sided derivative in all $\theta$ such that $\langle \theta,\nu(x)\rangle \leq 0$.
 \begin{lemma}\label{ax5}
	Let $u:\overline{\Omega} \rightarrow \R$ be Lipschitz continuous and semiconcave with modulus $\omega$ in $\overline{\Omega}$. Let $x\in\partial\Omega$ and let $\nu(x)$ be the outward unit normal vector to $\partial\Omega$ in $x$. Then, for any $\theta\in \R^n$ such that $\langle\theta, \nu(x)\rangle\leq0$ one has that
 	\begin{equation}\label{a25}
 	\partial^{\uparrow}u(x;\theta)=\min_{p\in D^+u(x)}\langle p,\theta\rangle=\partial^{\downarrow}u(x;\theta).
 	\end{equation}
  \end{lemma}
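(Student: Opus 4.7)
The plan is to establish the chain
\[
\min_{p\in D^+u(x)}\langle p,\theta\rangle \;\leq\; \partial^{\downarrow}u(x;\theta) \;\leq\; \partial^{\uparrow}u(x;\theta) \;\leq\; \min_{p\in D^+u(x)}\langle p,\theta\rangle,
\]
in which the middle inequality is automatic; only the two outer bounds demand work.

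For the upper bound on $\partial^{\uparrow}u(x;\theta)$ I would invoke Proposition~\ref{17}: for any $p\in D^+u(x)$, pick $\phi\in C^1(\R^n)$ with $D\phi(x)=p$ touching $u$ from above at $x$. Then for every admissible sequence $h_n\to 0^+$, $\theta_n'\to\theta$, $x+h_n\theta_n'\in\overline{\Omega}$,
\[
\frac{u(x+h_n\theta_n')-u(x)}{h_n} \;\leq\; \frac{\phi(x+h_n\theta_n')-\phi(x)}{h_n} \;=\; \langle p,\theta_n'\rangle + o(1),
\]
which upon taking $\limsup$ yields $\partial^{\uparrow}u(x;\theta)\leq\langle p,\theta\rangle$; minimizing over $p$ closes this side. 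This step needs no segment to remain in $\overline{\Omega}$ because $\phi$ is a test function defined on all of $\R^n$.

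For the lower bound on $\partial^{\downarrow}u(x;\theta)$ I would first handle the case of strictly inward vectors and then approximate. Fix $\epsilon>0$ and set $\theta^\epsilon:=\theta-\epsilon\nu(x)$, so that $\langle\theta^\epsilon,\nu(x)\rangle\leq -\epsilon<0$. The $C^2$ regularity of $\partial\Omega$ ensures $[x,x+h\theta^\epsilon]\subset\overline{\Omega}$ for $h$ small, so $\psi(h):=u(x+h\theta^\epsilon)$ is a one-dimensional Lipschitz semiconcave function and $\psi'_+(0)$ exists by classical theory. To identify its value, I would take any $h_n\to 0^+$, pick $p_n\in D^+u(x+h_n\theta^\epsilon)$ (nonempty by semiconcavity), and apply Proposition~\ref{pp1} at $x+h_n\theta^\epsilon$ toward $x$, obtaining
\[
\langle p_n,\theta^\epsilon\rangle \;\leq\; \frac{u(x+h_n\theta^\epsilon)-u(x)}{h_n} + |\theta^\epsilon|\,\omega(h_n|\theta^\epsilon|).
\]
The Lipschitz constant of $u$ bounds the $p_n$, so a subsequence converges to some $p^\star\in D^+u(x)$ by Proposition~\ref{pp2}; in the limit, $\min_{p\in D^+u(x)}\langle p,\theta^\epsilon\rangle \leq \psi'_+(0)$. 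The reverse inequality comes from Proposition~\ref{pp1} used at $x$ with $y=x+h\theta^\epsilon$, giving $\psi'_+(0)=\min_{p\in D^+u(x)}\langle p,\theta^\epsilon\rangle$.

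To return to a general admissible sequence $(h_n,\theta_n)$ converging to $(0^+,\theta)$, I would use the Lipschitz bound $L$ on $u$ to write
\[
\frac{u(x+h_n\theta_n)-u(x)}{h_n} \;\geq\; \frac{u(x+h_n\theta^\epsilon)-u(x)}{h_n} - L\,|\theta_n-\theta^\epsilon|,
\]
and let $n\to\infty$: the first term on the right converges to $\min_{p}\langle p,\theta^\epsilon\rangle$ by the previous step, while $|\theta_n-\theta^\epsilon|\to\epsilon$. Hence every admissible sequence has $\liminf$ bounded below by $\min_{p}\langle p,\theta^\epsilon\rangle - L\epsilon$, so $\partial^{\downarrow}u(x;\theta)\geq \min_{p}\langle p,\theta^\epsilon\rangle - L\epsilon$. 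Letting $\epsilon\to 0^+$, together with the continuity in $\theta$ of the support function of the bounded set $D^+u(x)$, completes the proof. The main obstacle is precisely the \emph{tangent} case $\langle\theta,\nu(x)\rangle=0$: no segment emanating from $x$ in direction $\theta$ stays inside $\overline{\Omega}$, so Proposition~\ref{pp1} cannot be applied radially; the detour through the strictly inward vectors $\theta^\epsilon$, paid for by a vanishing Lipschitz error $L\epsilon$, is the device that bypasses this difficulty.
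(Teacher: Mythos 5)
Your argument is correct in substance, but it takes a genuinely different route from the paper. The paper proves $\partial^{\uparrow}u(x;\theta)\le M(\theta,x)\le\partial^{\downarrow}u(x;\theta)$ by constructing, for each admissible sequence $(h_k,\theta_k)$, points $x_k$ of differentiability of $u$ inside a parabolic neighbourhood $\mathcal{Q}(x,\theta_k)$ of the ray $x+\R_+\theta_k$, controlling $|s_k-h_k|$ and using Proposition~\ref{pp2} to pass to a limit $\overline p\in D^+u(x)$. You instead perturb the direction to $\theta^\epsilon=\theta-\epsilon\nu(x)$, which enters $\Omega$ transversally so that the segment $[x,x+h\theta^\epsilon]$ lies in $\overline\Omega$ for small $h$; you then apply Proposition~\ref{pp1} radially on that segment (both at $x$ and at $x+h_n\theta^\epsilon$) to identify the one-sided derivative of $\psi(h)=u(x+h\theta^\epsilon)$ with $\min_{p\in D^+u(x)}\langle p,\theta^\epsilon\rangle$, and finally absorb the tangential case by Lipschitz estimates as $\epsilon\to0^+$. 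Your route is shorter and avoids the explicit selection of differentiability points; the paper's route avoids the detour through perturbed directions and the $\epsilon$-limit.

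One point should be fixed. You invoke the ``continuity in $\theta$ of the support function of the bounded set $D^+u(x)$''. For $x\in\partial\Omega$ the superdifferential is \emph{not} bounded: by Proposition~\ref{p5} it contains the half-line $\{p+\lambda\nu(x):\lambda\le 0\}$. The conclusion you need, namely
$\min_{p}\langle p,\theta^\epsilon\rangle-L\epsilon\ \ge\ \min_p\langle p,\theta\rangle-2L\epsilon$,
holds nonetheless, but its justification is one-sided and relies on the Lipschitz bound in the \emph{inward} normal direction: applying Proposition~\ref{pp1} at $x$ with $y=x-h\nu(x)\in\overline\Omega$ and letting $h\to0^+$ gives $\langle p,\nu(x)\rangle\le L$ for every $p\in D^+u(x)$, hence $\langle p,\theta^\epsilon\rangle=\langle p,\theta\rangle-\epsilon\langle p,\nu(x)\rangle\ge\langle p,\theta\rangle-L\epsilon$ and the passage $\epsilon\to0^+$ goes through. (For the same reason, $\min_p\langle p,\theta\rangle$ is finite even though $D^+u(x)$ is unbounded.) With this substituted for the appeal to a bounded superdifferential, the proof is complete.
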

 \noindent
 For reader's convenience the proof is given in Appendix.
 \begin{remark}
 	We observe that Lemma \ref{ax5} also holds when $x\in \Omega$. In this case, \eqref{a25} is a direct consequence of \cite[Theorem 4.5]{7}. 
 \end{remark}
 \noindent
  Fix $x\in \partial \Omega$ and let $\nu(x)$ be the outward unit normal vector to $\partial \Omega$ in $x$. All $p\in D^+_x u(x)$ can be written as
 $$
 p=\pt+\pn
 $$
 where $\pn$ is the normal component of $p$, i.e.,
 $$
 \pn=\langle p,\nu(x)\rangle \nu(x),
 $$
 and $\pt$ is the tangential component of $p$ which satisfies 
 $$
 \langle \pt,\nu(x)\rangle=0.
 $$
 \begin{proposition}\label{prop2.3}
 	Let $x\in \partial \Omega$ and let $\nu(x)$ be the outward unit normal vector to $\partial \Omega$ in $x$. Let $u:\overline{\Omega}\rightarrow \R$ be Lipschitz continuous and semiconcave with modulus $\omega$. Then,
 	\begin{align}
 	 - \partial_{-\nu}^+u(x)=\cp(x):=\max\{\c(x):p\in D^+u(x)\},
 	\end{align}
 	where
 	\begin{equation}
 	\c(x):=\max\{\lambda \in \R: \pt+\lambda \nu(x)\in D^+u(x) \},\ \  \ \ \ \forall p\in D^+u(x).
 	\end{equation}
 \end{proposition}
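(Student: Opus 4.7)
The plan is to reduce the identity to a direct computation of the one-sided derivative in direction $-\nu(x)$ using Lemma \ref{ax5}, then to identify that quantity with $\cp(x)$ via a short two-sided argument that exploits the invariance property of Proposition \ref{p5}.

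First I would note that $-\nu(x)\in T_{\overline{\Omega}}(x)$ since $\langle -\nu(x),\nu(x)\rangle=-1\leq 0$. Therefore Lemma \ref{ax5} applies with $\theta=-\nu(x)$ and yields that the upper and lower Dini derivatives coincide, so the one-sided derivative $\partial^+_{-\nu}u(x)$ exists and is equal to
\begin{equation*}
\partial^+_{-\nu}u(x)\;=\;\min_{p\in D^+u(x)}\langle p,-\nu(x)\rangle\;=\;-\max_{p\in D^+u(x)}\langle p,\nu(x)\rangle .
\end{equation*}
Thus it suffices to prove the identity
\begin{equation*}
\cp(x)\;=\;\max_{p\in D^+u(x)}\langle p,\nu(x)\rangle ,
\end{equation*}
after which the statement follows by flipping signs.

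Next I would establish this identity by two inequalities. For any $p\in D^+u(x)$, the decomposition $p=p^\tau+\langle p,\nu(x)\rangle\nu(x)$ shows that $\lambda=\langle p,\nu(x)\rangle$ is an admissible value in the definition of $\c(x)$, so $\c(x)\geq\langle p,\nu(x)\rangle$; taking the maximum over $p$ gives $\cp(x)\geq\max_p\langle p,\nu(x)\rangle$. Conversely, for any $p\in D^+u(x)$ and any admissible $\lambda$ with $p^\tau+\lambda\nu(x)\in D^+u(x)$, the vector $q:=p^\tau+\lambda\nu(x)$ belongs to $D^+u(x)$ and has normal component $\langle q,\nu(x)\rangle=\lambda$, so $\lambda\leq\max_{q\in D^+u(x)}\langle q,\nu(x)\rangle$; consequently $\c(x)\leq\max_q\langle q,\nu(x)\rangle$ for every $p$, hence $\cp(x)\leq\max_q\langle q,\nu(x)\rangle$. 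Combining the two inequalities gives the required equality.

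The only conceptual point that needs care is making sure that the maximum in $\c(x)$ is well-defined; here Proposition \ref{p5} guarantees that the set $\{\lambda:p^\tau+\lambda\nu(x)\in D^+u(x)\}$ contains the whole half-line $(-\infty,\langle p,\nu(x)\rangle]$, and the upper semicontinuity/closedness of $D^+u(x)$ (Proposition \ref{pp2}) ensures that the supremum in the definition of $\c(x)$ is attained, so $\c(x)$ is indeed a maximum. This is the only spot where something beyond bookkeeping is required; the rest is an algebraic manipulation of the normal/tangential decomposition. With the chain $-\partial^+_{-\nu}u(x)=\max_{p\in D^+u(x)}\langle p,\nu(x)\rangle=\cp(x)$ in hand, the proof is complete.
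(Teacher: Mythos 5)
Your proof is correct and follows essentially the same route as the paper: apply Lemma~\ref{ax5} with $\theta=-\nu(x)$, flip signs, and identify the resulting quantity with $\cp(x)$. The only difference is that you spell out the two-sided inequality showing $\max_{p\in D^+u(x)}\langle p,\nu(x)\rangle=\cp(x)$, and you justify attainment of the maximum via Propositions~\ref{p5} and~\ref{pp2}, whereas the paper simply asserts this final equality.
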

 \begin{proof}
Let $x\in \partial \Omega$ and let $\nu(x)$ be the outward unit normal vector to $\partial \Omega$ in $x$. By Lemma \ref{ax5} we obtain that
 	\begin{align*}
 	&-\partial_{-\nu}^+ u(x)=-\min_{p\in D^+u(x)}\{-\langle p, \nu(x)\rangle\}=\max_{p\in D^+u(x)} \{\langle p, \nu(x)\rangle\}\\
 	&=\max \{\c(x) : p\in D^+u(x)\}=:\cp(x).
 	\end{align*}
This completes the proof.
 \end{proof}
 \subsection{Necessary conditions}\label{subsec.NC}
Let $\Omega \subset \mathbb{R}^n$ be a bounded open set with $C^2$ boundary. Let $\Gamma$ be the metric subspace of $AC(0,T;\mathbb{R}^n)$ defined by
	\begin{equation*}
	\Gamma=\Big\{\gamma\in AC(0,T;\mathbb{R}^n):\ \gamma(t)\in\overline{\Omega},\ \ \forall t\in[0,T]\Big\}.
	\end{equation*}
	For any $(t,x)\in [0,T]\times\overline{\Omega}$, we set
	\begin{equation*}
	\Gamma_t[x]=\left\{\gamma\in\Gamma: \gamma(t)=x\right\}.
	\end{equation*}
	Given $ (t,x)\in [0,T]\times\overline \Omega$, we consider the constrained minimization problem 
	\begin{equation}\label{M}
	\inf_{\gamma\in\Gamma_t[ x]}J_t[\gamma],  \ \ \ \ \mbox{where}  \ \ \ \ \
	J_t[\gamma]=\Big\{\int_{t}^T f(s,\gamma(s),\dot{\gamma}(s)) \,ds + g(\gamma(T))\Big\}.
	\end{equation}
	We denote by $\minimarc_t [x]$ the set of solutions of \eqref{M}, that is
	\begin{equation*}
	\minimarc_t[x]=\Big\{\gamma\in \Gamma_t[x]: J_t[\gamma]=\inf_{\Gamma_t[x]}J_t[\gamma]\Big\}.
	\end{equation*}
	Let $U\subset \mathbb{R}^n$ be an open set such that $\overline{\Omega}\subset U$.
	We assume that $f: [0,T]\times U\times \mathbb{R}^n\rightarrow \mathbb{R}$ and $g:U\rightarrow \mathbb{R}$ satisfy the following conditions.
	\begin{enumerate}
		\item[(g1)] $g\in C^1_b(U)$
		\item[(f0)] $f\in C\big([0,T]\times U\times \mathbb{R}^n\big)$ and for all $t\in[0,T]$ the function  $(x,v)\longmapsto f(t,x,v)$ is differentiable. Moreover, $D_xf$, $D_vf$ are continuous on $[0,T]\times U\times \mathbb{R}^n$ 
		and there exists a constant $M\geq 0$ such that
		\begin{equation}
		|f(t,x,0)|+|D_xf(t,x,0)|+|D_vf(t,x,0)|\leq M \ \ \ \ \forall\ (t,x)\in [0,T]\times U.\label{bm}
		\end{equation}
		\item[(f1)] For all $t\in[0,T]$ the map $(x,v)\longmapsto D_{v}f(t,x,v)$ is continuously differentiable and there exists a constant $\mu\geq 1$ such that
		\begin{align}
		&\frac{I}{\mu} \leq D^2_{vv}f(t,x,v)\leq I\mu,\label{f2}\\
		&||D_{vx}^2f(t,x,v)||\leq \mu(1+|v|), \label{fvx}
		\end{align}
		for all $(t,x,v)\in [0,T]\times U\times \mathbb{R}^n$, where $I$ denotes the identity matrix.
		\item[(f2)] For all $(x,v)\in U\times\mathbb{R}^n$ the function $t\longmapsto f(t,x,v)$ and the map $t\longmapsto D_vf(t,x,v)$ are Lipschitz continuous. Moreover there exists a constant $\kappa\geq 0$ such that
		\begin{align}
		&|f(t,x,v)-f(s,x,v)|\leq \kappa(1+|v|^2)|t-s|,\label{lf1}\\
		&|D_vf(t,x,v)-D_vf(s,x,v)|\leq \kappa(1+|v|)|t-s|,\label{fvt}
		\end{align} 
		for all $t$, $s\in [0,T]$, $x\in  U$, $v\in\mathbb{R}^n$.
	\end{enumerate}
\begin{remark}
By classical results in the calculus of variation (see, e.g., \cite[Theorem 11.1i]{c}), there exists at least one minimizer of \eqref{M} in $\Gamma$ for any fixed point $x\in\overline{\Omega}$.
\end{remark}
We denote by $H:[0,T]\times U\times\mathbb{R}^n \rightarrow \mathbb{R}$ the Hamiltonian 
		\begin{equation*}
		H(t,x,p)=\sup_{v\in \mathbb{R}^n} \Big\{ -\langle p,v\rangle - f(t,x,v)\Big\},\qquad \forall \ (t,x,p)\in [0,T]\times U\times \mathbb{R}^n.
		\end{equation*}
\noindent
In the next result we show the necessary conditions for our problem (for a proof see \cite{ccc}).
\begin{theorem}\label{51}
	Suppose that (g1), (f0)-(f2) hold. For any $x\in\overline{\Omega}$ and any $\gamma \in\minimarc_t[x]$ the following holds true.
	\begin{enumerate}
		\item[(i)] $\gamma$ is of class $C^{1,1}([t,T];\overline{\Omega})$.  
		\item[(ii)] There exist:
		\begin{enumerate}
			\item[(a)] a Lipschitz continuous arc $p:[t,T]\rightarrow \mathbb{R}^n$, 
			\item[(b)]a constant $\nu\in\mathbb{R}$ such that
			\begin{equation*}
			0\leq\nu\leq \max\left\{1,2\mu \ \sup_{x\in U}\Big|D_pH(T,x,Dg(x))\Big|\right\},
			\end{equation*}
		\end{enumerate}
		which satisfy the adjoint system
		\begin{align}\label{sr}
		\begin{cases}
		\dot p(s)=-D_xf(s,\gamma(s),p(s))-\Lambda(s,\gamma,p) D\d(\gamma(s)) &\mbox{for a.e.}\ s\in[t,T],\\
		p(T)= D g(\gamma(T))+ \nu D\d(\gamma(T))\mathbf{1}_{\partial\Omega}(\gamma(T))
		\end{cases}
		\end{align}
		and 
		\begin{equation}\label{cw}
	-\langle p(t),\dot{\gamma}(t)\rangle - f(t,\gamma(t),p(t))=\sup_{v\in\mathbb{R}^n} \{-\langle p(t),v\rangle-f(t,x,v)\},
		\end{equation}
where $\Lambda:[t,T]\times \Sigma_{\rho_0}\times \R^n\rightarrow \R$ is a bounded continuous function independent of $\gamma$ and $p$.
	\end{enumerate}
	Moreover, 
\begin{enumerate}
\item[(iii)] the following estimate holds
\begin{equation}\label{lstar}
||\dot{\gamma}||_\infty\leq L^\star, \ \ \ \forall \gamma\in \minimarc_t[x],
\end{equation}
where $L^\star=L^\star(\mu,M',M,\kappa,T,||Dg||_\infty,||g||_\infty)$.
\end{enumerate}
\end{theorem}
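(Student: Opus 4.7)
The proof (worked out in \cite{ccc}) follows a penalization-and-limit scheme. My plan is as follows.

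\emph{Penalization and unconstrained PMP.} For each $\epsilon>0$, I would minimize
\begin{equation*}
J_t^\epsilon[\gamma] := J_t[\gamma] + \tfrac{1}{\epsilon}\int_t^T \Psi(\d(\gamma(s)))\,ds
\end{equation*}
over all $\gamma\in AC(t,T;\R^n)$ with $\gamma(t)=x$, where $\Psi\in C^\infty(\R)$ vanishes on $(-\infty,0]$ and is strictly convex on $(0,\infty)$. Coercivity from (f1) yields a minimizer $\gamma_\epsilon$; comparison with $\gamma$ (for which the penalty vanishes) plus lower semicontinuity gives $\gamma_\epsilon\to\gamma$ uniformly with $\tfrac{1}{\epsilon}\int_t^T \Psi(\d(\gamma_\epsilon))\,ds\to 0$. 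The classical unconstrained Pontryagin Maximum Principle then supplies a Lipschitz adjoint $p_\epsilon$ satisfying
\begin{equation*}
\dot p_\epsilon = -D_x f(s,\gamma_\epsilon,p_\epsilon) - \Lambda_\epsilon(s)\,D\d(\gamma_\epsilon),\qquad \Lambda_\epsilon(s):=\tfrac{1}{\epsilon}\Psi'(\d(\gamma_\epsilon(s)))\geq 0,
\end{equation*}
together with $p_\epsilon(T)=Dg(\gamma_\epsilon(T))$ and the Hamiltonian maximum principle.

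\emph{Uniform estimates and limit.} The two-sided Hessian bounds in (f1) combined with the maximum principle give $\dot\gamma_\epsilon=-D_pH(s,\gamma_\epsilon,p_\epsilon)$ and uniform $L^\infty$ control on $\gamma_\epsilon$, $\dot\gamma_\epsilon$, $p_\epsilon$. Testing the adjoint equation against $D\d$ (well-defined via \eqref{dn}) yields the crucial uniform bound on $\int_t^T \Lambda_\epsilon(s)\,ds$. Extracting a subsequence, $\gamma_\epsilon\to\gamma$ in $C^1$, $p_\epsilon\to p$ uniformly, and $\Lambda_\epsilon(s)\,ds\rightharpoonup^\ast \mu$ weakly-$\ast$ for a nonnegative Radon measure $\mu$ supported on $\{s:\gamma(s)\in\partial\Omega\}$. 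The rigidity step, restricted to $[t,T)$, is this: at boundary contact the curve must satisfy $\langle\dot\gamma(s),D\d(\gamma(s))\rangle\leq 0$; since $\dot\gamma=-D_pH(s,\gamma,p)$, the uniform strict convexity of $f$ in $v$ together with the $C^2$-regularity of $\d$ on $\Sigma_{\rho_0}$ forces the normal component of $p$ to be uniquely determined by $(s,\gamma,\pt)$. This identifies the density of $\mu|_{[t,T)}$ with $\Lambda(s,\gamma(s),p(s))$ for an explicit bounded continuous $\Lambda:[t,T]\times\Sigma_{\rho_0}\times\R^n\to\R$ built from $f$ and $\d$ alone. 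Any atom of $\mu$ at $s=T$ produces the term $\nu\,D\d(\gamma(T))\mathbf{1}_{\partial\Omega}(\gamma(T))$ in $p(T)$; the explicit bound on $\nu$ in (ii)(b) comes from a feasible endpoint variation along $-D\d(\gamma(T))$ combined with the quadratic lower bound on $f$ from (f1).

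\emph{Regularity and velocity bound.} Item (i) follows from the limiting identity $\dot\gamma(s)=-D_pH(s,\gamma(s),p(s))$: since (f1) makes $H$ of class $C^{1,1}$ in $p$ locally uniformly in $(s,x)$ and $\gamma,p$ are Lipschitz, $\dot\gamma$ admits a Lipschitz representative on $[t,T]$. For item (iii), the quadratic lower bound $f(s,x,v)\geq \tfrac{|v|^2}{2\mu}-M'$ supplied by (f0)--(f1) turns a comparison of $J_t[\gamma]$ with the cost of a fixed admissible benchmark into an $L^2$ bound on $\dot\gamma$; item (i) then upgrades it to the stated $L^\infty$ bound $L^\star$. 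The principal obstacle in the whole argument is the rigidity step in the passage to the limit: ruling out concentration of the multipliers on $[t,T)$ and identifying $\Lambda$ as a bounded continuous function of $(s,x,p)$, independent of the particular minimizer---this is exactly where the $C^2$-regularity of $\partial\Omega$ and the uniform convexity of $f$ in $v$ are both indispensable.
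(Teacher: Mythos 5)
Your plan reproduces the classical penalization scheme for state-constrained problems, and it can in principle be made to work, but as written it leaves gaps exactly where the theorem is difficult. First, $\gamma_\epsilon$ need not converge to the particular minimizer $\gamma$ you fixed: in general it only clusters to \emph{some} minimizer of $J_t$. The standard fix is to stabilize the penalized functional, e.g.\ by adding $\int_t^T|\tilde\gamma(s)-\gamma(s)|^2\,ds$, whose unique minimizer does converge to $\gamma$; without this, the PMP you obtain in the limit is attached to the wrong trajectory.

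Second, the ``rigidity step'' as stated does not establish the central point, namely that the limiting multiplier measure is absolutely continuous with a bounded \emph{continuous} density. At a contact time $s_0\in(t,T)$ you only record $\langle\dot\gamma(s_0),D\d(\gamma(s_0))\rangle\leq 0$; but $\d(\gamma(\cdot))\leq 0$ attains a maximum at $s_0$, which gives the two-sided information $\langle\dot\gamma(s_0^+),\nu\rangle\leq 0\leq\langle\dot\gamma(s_0^-),\nu\rangle$, hence the equality $\langle\dot\gamma(s_0),\nu\rangle=0$. It is this equality, together with the strict monotonicity of $\lambda\mapsto\langle D_pH(s,\gamma,p^\tau+\lambda\nu),\nu\rangle$ coming from (f1), that pins down $p^\nu$; and the same two-sidedness is what rules out atoms: if the multiplier measure had an atom of mass $a>0$ at $s_0$, $p$ would jump by $-a\,\nu$, and then $\dot\gamma=-D_pH(s,\gamma,p)$ with the uniform lower bound on $D_{pp}H$ forces $\langle\nu,\dot\gamma(s_0^+)-\dot\gamma(s_0^-)\rangle>0$, a contradiction. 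Even once atoms are excluded, ``unique determination of $p^\nu$ by $(s,\gamma,p^\tau)$'' does not by itself rule out a singular-continuous part; you still need to observe that $D\d$ is normal so the tangential projection of the adjoint inclusion sees no measure, whence $p^\tau$ is Lipschitz, and that on the contact set $p^\nu$ is the implicit solution of the relation above (a smooth function of Lipschitz data), while off it the ODE is regular; only then is $p$ Lipschitz, the measure absolutely continuous with an $L^\infty$ density, and differentiating $\d(\gamma(s))=0$ twice gives the explicit feedback $\Lambda(s,x,p)$ claimed in the statement. Finally, note that the present paper does not prove Theorem~\ref{51} at all; it defers to \cite{ccc}, which may organize the argument differently (for instance by invoking a state-constrained maximum principle directly rather than penalizing), although the no-atom and absolute-continuity steps above are the crux in either route.
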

\begin{remark}
The (feedback) function $\Lambda$ in \eqref{sr} can be computed explicitly, see \cite[Remark 3.4]{ccc}.
\end{remark}
\noindent
Following the terminology of control theory, given an optimal trajectory $\gamma$, any arc $p$ satisfying \eqref{sr} and \eqref{cw} is called a \textit{dual arc} associated with $\gamma$.
\begin{remark}\label{rem.dotgamma} Following \eqref{cw} and the regularity of $H$, the derivative of the optimal trajectory $\gamma$ can be expressed in function of the dual arc:  
$$
\dot \gamma(s)= -D_pH(s,\gamma(s),p(s)), \ \ \ \ \forall  s\in[t,T].
$$
\end{remark}
\section{Sensitivity relations and fractional semiconcavity}
In this section, we investigate further the optimal control problem with state constraints introduced in Subsection \ref{subsec.NC} and show our main semiconcavity result of the value function. For this, we have to enforce the assumptions on the data. \\ 
Suppose that $f:[0,T]\times U \times \mathbb{R}^n\rightarrow\mathbb{R}$ satisfies the assumptions (f0)-(f2) and
\begin{enumerate}
	\item[(f3)] for all $s\in[0,T]$, for all $x\in U$ and for all $v$, $w \in B_R$, there exists a constant $C(R)\geq 0$ such that
			\begin{equation}
			|D_x f(s,x,v)-D_xf(s,x,w)|\leq C(R) |v-w|;
			\end{equation}
			\item[(f4)] for any $R>0$ the map $x\longmapsto f(t,x,v)$ is semiconcave with linear modulus $\omega_R$, i.e., for any $(t,v)\in[0,T]\times B_R$ one has that
			\begin{equation*}
			\lambda f(t,y,v)+(1-\lambda)f(t,x,v)-f(t,\lambda y+(1-\lambda)x,v)\leq \lambda(1-\lambda)|x-y|\omega_R(|x-y|),
			\end{equation*}
			for any pair $x$, $y\in U$ such that the segment $[x,y]$ is contained in $U$ and for any $\lambda\in[0,1]$.
\end{enumerate}
Moreover, we assume that $g:U\rightarrow \mathbb{R}$ satisfies (g1).
\noindent
Define $u:[0,T]\times \overline{\Omega}\rightarrow\mathbb{R}$ as the value function of the minimization problem \eqref{M}, i.e.,
\begin{equation}\label{vf}
u(t,x)=\inf_{\gamma\in \Gamma_t[x]} \int_{t}^T f(s,\gamma(s),\dot{\gamma}(s)) \,ds + g(\gamma(T)). 
\end{equation}
\begin{remark}
We observe that the value function $u$ is Lipschitz continuous in $[0,T]\times\overline{\Omega}$ (see \cite[Proposition 4.1]{ccc}).
\end{remark}
\noindent
Under the above assumptions on $\Omega$, $f$ and $g$ the sensitivity relations for our problem can be stated as follows.
\begin{theorem}\label{t1}
For any $\varepsilon > 0$ there exists a constant $c_\varepsilon \geq 1$ such that for any $(t,x)\in [0,T-\varepsilon]\times \overline{\Omega}$ and for any $\gamma\in\minimarc_t[x]$, denoting by $p\in\L(t,T,\R^n)$ a dual arc associated with $\gamma$, one has that 
\begin{equation*}
u(t+\sigma,x+h)-u(t,x)\leq \sigma H(t,x,p(t))+\langle p(t),h\rangle +c_\varepsilon(|h|+|\sigma|)^\frac{3}{2},
\end{equation*}
for all $h\in \R^n$ such that $x+h\in\overline{\Omega}$, and for all $\sigma\in \R$ such that $0\leq t+\sigma\leq T-\varepsilon$.
\end{theorem}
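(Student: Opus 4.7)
My plan is the classical sensitivity-relation strategy: build an admissible competitor $\tilde\gamma\in\Gamma_{t+\sigma}[x+h]$ out of $\gamma\in\minimarc_t[x]$, bound its cost, and conclude via the sub-optimality inequality $u(t+\sigma,x+h)\leq J_{t+\sigma}[\tilde\gamma]$. The target expression will arise from three sources: (i) the missing initial piece $-\int_t^{t+\sigma}f(s,\gamma,\dot\gamma)\,ds\approx-\sigma f(t,x,\dot\gamma(t))$, combined with (ii) the initial boundary term $-\sigma\langle p(t),\dot\gamma(t)\rangle$ produced by integration by parts against the adjoint equation \eqref{sr} (together yielding $\sigma H(t,x,p(t))$ by Remark \ref{rem.dotgamma} and \eqref{cw}); and (iii) the remaining initial boundary term $\langle p(t),h\rangle$. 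The fractional exponent $3/2$ comes from the \emph{inward push} needed to keep $\tilde\gamma$ inside $\overline\Omega$ when $\gamma$ touches $\partial\Omega$.

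Set $\delta:=|h|+|\sigma|$ and $\tau:=\sqrt{\delta}$. Define $h_0:=x+h-\gamma(t+\sigma)$, so that $|h_0|\leq|h|+L^\star|\sigma|\leq(L^\star+1)\delta$ by Theorem \ref{51}(iii); the adjusted increment $h_0$ is exactly what must be added to $\gamma(t+\sigma)$ to reach the required initial value $x+h$. Using \eqref{dn}, extend $-D\d$ to a bounded field $\eta\in C^1_b(\R^n;\R^n)$ pointing strictly inward near $\partial\Omega$. Choose $C^1$ cutoffs so that $\tilde\gamma(s):=\gamma(s)+\beta(s)$, with $\beta$ of the schematic form
\begin{equation*}
\beta(s)=\varphi_1(s)\,h_0+\rho\,\varphi_2(s)\,\eta(\gamma(s)),\qquad \rho:=C_1\delta,
\end{equation*}
satisfies $\beta(t+\sigma)=h_0$, $\beta(T)=0$, and $\tilde\gamma(s)\in\overline\Omega$ for all $s$. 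The essential property is that on any time interval where $\gamma$ is close to $\partial\Omega$ (within distance $\sim\delta$), the inward push term dominates the outward drift from $\varphi_1 h_0$, restoring containment. The cutoffs are chosen with slopes $|\dot\varphi_i|\lesssim1/\tau$ concentrated on transition layers of width $\tau$; the hypothesis $t+\sigma\leq T-\varepsilon$ is used to fit these layers inside $[t+\sigma,T]$ and accounts for the $\varepsilon$-dependence in $c_\varepsilon$.

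The cost comparison splits as
\begin{equation*}
J_{t+\sigma}[\tilde\gamma]-J_t[\gamma]=\int_{t+\sigma}^T[f(s,\gamma+\beta,\dot\gamma+\dot\beta)-f(s,\gamma,\dot\gamma)]\,ds-\int_t^{t+\sigma}f(s,\gamma,\dot\gamma)\,ds,
\end{equation*}
with the second piece equal to $-\sigma f(t,x,\dot\gamma(t))+O(\sigma^2)$. The bracketed integrand is expanded by first-order Taylor to $\langle D_xf,\beta\rangle+\langle D_vf,\dot\beta\rangle+r(s)$ with $|r(s)|\leq C(|\beta|^2+|\dot\beta|^2)$ via (f1), (f3), (f4). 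On the transition layers $|\beta|\lesssim\delta$ and $|\dot\beta|\lesssim\delta/\tau$, giving a total remainder of order $\delta^2/\tau+\delta^2=O(\delta^{3/2})$ by $\tau=\sqrt\delta$. Integration by parts, using the Pontryagin identities $D_vf(s,\gamma,\dot\gamma)=-p(s)$ and $D_xf=-\dot p-\Lambda D\d(\gamma)$, converts the first-order integral into $\langle p(t+\sigma),h_0\rangle-\int\Lambda\langle D\d(\gamma),\beta\rangle\,ds+O(\rho)$. Since $p$ is Lipschitz and $h_0=h-\sigma\dot\gamma(t)+O(\sigma^2)$, one gets $\langle p(t+\sigma),h_0\rangle=\langle p(t),h\rangle-\sigma\langle p(t),\dot\gamma(t)\rangle+O(\sigma\delta)$; combined with $-\sigma f(t,x,\dot\gamma(t))$ and \eqref{cw} this yields exactly $\sigma H(t,x,p(t))+\langle p(t),h\rangle$ modulo absorbable $O(\delta)$ terms. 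The main obstacle is the construction of $\varphi_1,\varphi_2$ that simultaneously guarantees $\tilde\gamma\in\overline\Omega$ and respects the tight scaling $\rho\sim\delta$, $\tau\sim\sqrt\delta$: the inward push must kick in precisely as $\gamma$ nears $\partial\Omega$, and the timing constraint imposed by the endpoint values $\beta(t+\sigma)=h_0$, $\beta(T)=0$ makes this a delicate geometric exercise. Any deviation from this balance would either violate the state constraint or degrade the remainder bound, which is exactly why the $3/2$ exponent---and not $2$---is forced.
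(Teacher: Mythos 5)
Your high-level strategy---build an admissible competitor from $\gamma$, compare costs using the Pontryagin system \eqref{sr}--\eqref{cw}, integrate by parts, and use the scaling $r\sim\sqrt{|h|+|\sigma|}$---matches the paper's, and you correctly identify how the Hamiltonian and the exponent $3/2$ arise. But there are genuine gaps. You never actually construct $\varphi_1,\varphi_2$ guaranteeing $\tilde\gamma(s)\in\overline\Omega$, which you yourself flag as ``a delicate geometric exercise''; this is the heart of the argument. The ansatz $\beta=\varphi_1 h_0+\rho\varphi_2\eta(\gamma)$ with $\beta(t+\sigma)=h_0$ forces $\varphi_2(t+\sigma)=0$ and $\varphi_1(t+\sigma)=1$, so immediately after $t+\sigma$ the inward push is negligible while the outward component of $\beta$ is of order $\delta$; keeping $\tilde\gamma$ inside $\overline\Omega$ there relies on a second-order cancellation that is not automatic when $\gamma$ hugs $\partial\Omega$. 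Moreover, if $\varphi_2$ extends over a fixed portion of $[t+\sigma,T]$, the multiplier term $\int\Lambda\langle Db_\Omega(\gamma),\beta\rangle\,ds$ is $O(\rho)=O(\delta)$, which is \emph{not} absorbable into $c_\varepsilon\delta^{3/2}$ (for small $\delta$ one has $\delta>\delta^{3/2}$); the ``absorbable $O(\delta)$ terms'' you invoke at the end cannot stand. The paper avoids all of this by adding the simple linear ramp $\gamma_h(s)=\gamma(s)+\big(1+(t-s)/r\big)_+h$ on a layer of width $r$ and then \emph{projecting} onto $\overline\Omega$ via $\widehat\gamma_h=\gamma_h-d_\Omega(\gamma_h)Db_\Omega(\gamma_h)$; admissibility is automatic, \cite[Lemma 3.1]{cc} gives the explicit formula \eqref{d2} for $\dot{\widehat\gamma}_h$ from which all remainders are controlled, and the $\Lambda$-term is $\le r|h|=|h|^{3/2}$ because the perturbation is localized.

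A second gap is that your plan only covers $\sigma\ge0$: the split $J_{t+\sigma}[\tilde\gamma]-J_t[\gamma]=\int_{t+\sigma}^T[\ldots]-\int_t^{t+\sigma}f(s,\gamma,\dot\gamma)\,ds$ with $\tilde\gamma=\gamma+\beta$ needs $\gamma$ on $[t+\sigma,T]$, but $\gamma$ is only defined on $[t,T]$, so for $\sigma<0$ the construction is undefined. The theorem requires $\sigma$ of both signs (Corollary \ref{cor1} uses $t\pm\sigma$), and the paper treats $\sigma<0$ separately in Lemma \ref{semiv} by time-translating $\widehat\gamma_h$ by $\sigma$ and freezing it at its endpoint value on $[T-\sigma,T]$---a device your plan does not contain. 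More generally the paper proceeds modularly: it first proves the purely spatial estimate $u(t,x+h)-u(t,x)-\langle p(t),h\rangle\le c_\varepsilon|h|^{3/2}$ (Lemma \ref{teoremap}), then reduces both time-shifted cases (Lemmas \ref{semiv2} and \ref{semiv}) to that spatial lemma via the dynamic programming principle applied at $t\pm\sigma$ together with the Lipschitz continuity of $p$ and of $s\mapsto H(s,\gamma(s),p(s))$. Collapsing everything into one construction, as you propose, is strictly more demanding than splitting the argument and ends up requiring precisely the delicate geometry you leave unresolved.
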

\noindent
\begin{corollario}\label{coro.ukeblsrnd}
	Let $\Omega\subset \mathbb{R}^n$ be a bounded open set with $C^2$ boundary. Let $(t,x)\in[0,T)\times \overline{\Omega}$. Let $\gamma\in \minimarc_t[x]$ and let $p\in \L(t,T;\R^n)$ be a dual arc associated with $\gamma$. Then,
	\begin{equation}
	\Big(H(s,\gamma(s),p(s)),p(s)\Big)\in D^+u(s,\gamma(s)) \ \ \ \forall \ s \in [t,T].
	\end{equation}
\end{corollario}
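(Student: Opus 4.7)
The plan is to reduce the corollary to Theorem \ref{t1} via dynamic programming. I first observe that for every $s\in[t,T)$ the pair $(\gamma|_{[s,T]},p|_{[s,T]})$ inherits the optimality structure from $(\gamma,p)$: by a standard concatenation argument, if some competitor in $\Gamma_s[\gamma(s)]$ had a strictly smaller cost $J_s$, splicing it with $\gamma|_{[t,s]}$ would produce an admissible curve in $\Gamma_t[x]$ with $J_t$-cost smaller than $J_t[\gamma]$, contradicting $\gamma\in\minimarc_t[x]$. Hence $\gamma|_{[s,T]}\in\minimarc_s[\gamma(s)]$. Moreover, the restriction $p|_{[s,T]}$ still satisfies the adjoint system \eqref{sr} and the Hamiltonian maximization \eqref{cw} on $[s,T]$ with the same terminal condition and the same admissible scalar $\nu$, so it is a dual arc associated with $\gamma|_{[s,T]}$ in the sense of Subsection \ref{subsec.NC}.

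Next, I apply Theorem \ref{t1} at the point $(s,\gamma(s))$ with $\varepsilon:=(T-s)/2>0$, so that $s\in[0,T-\varepsilon]$. This yields
\begin{equation*}
u(s+\sigma,\gamma(s)+h)-u(s,\gamma(s))\leq \sigma\,H(s,\gamma(s),p(s))+\langle p(s),h\rangle+c_\varepsilon(|h|+|\sigma|)^{3/2}
\end{equation*}
for every sufficiently small $(h,\sigma)$ with $\gamma(s)+h\in\overline{\Omega}$ and $0\leq s+\sigma\leq T-\varepsilon$. Since $(|h|+|\sigma|)^{3/2}=o\bigl(\sqrt{|h|^2+\sigma^2}\bigr)$ as $(h,\sigma)\to 0$, this inequality is precisely the defining property of the Fr\'{e}chet superdifferential of $u$ in the joint variables $(t,x)$ at $(s,\gamma(s))$, yielding
\begin{equation*}
\bigl(H(s,\gamma(s),p(s)),\,p(s)\bigr)\in D^+u(s,\gamma(s))\qquad\forall\, s\in[t,T).
\end{equation*}

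It remains to handle the endpoint $s=T$. I would pick a sequence $s_k\nearrow T$; by the Lipschitz regularity of $\gamma$ and $p$ supplied by Theorem \ref{51} and the continuity of $H$, one has $(s_k,\gamma(s_k))\to(T,\gamma(T))$ and $\bigl(H(s_k,\gamma(s_k),p(s_k)),p(s_k)\bigr)\to\bigl(H(T,\gamma(T),p(T)),p(T)\bigr)$. Theorem \ref{t1} also provides local fractional semiconcavity of $u$ on $[0,T-\varepsilon]\times\overline{\Omega}$ for every $\varepsilon>0$, so the joint-variable analogue of Proposition \ref{pp2} (upper semicontinuity of $D^+u$ along sequences) transfers the inclusions already obtained at $s_k$ to the limit, giving the claim at $s=T$. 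I do not expect any serious obstacle here: once Theorem \ref{t1} is in hand, the corollary amounts to an unpacking of the definition of superdifferential combined with dynamic programming; the only mildly delicate step is the closed-graph passage at $s=T$, which is handled by the semiconcavity already encoded in Theorem \ref{t1}.
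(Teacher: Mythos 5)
Your core argument---using the dynamic programming principle to show that $\gamma|_{[s,T]}\in\minimarc_s[\gamma(s)]$ with $p|_{[s,T]}$ as its dual arc, and then applying Theorem~\ref{t1} at $(s,\gamma(s))$ with $\varepsilon=(T-s)/2$---is exactly the mechanism the paper intends (the corollary is stated without proof as an immediate consequence of Theorem~\ref{t1}), and it cleanly settles the case $s\in[t,T)$. The observation that $c_\varepsilon(|h|+|\sigma|)^{3/2}=o(|h|+|\sigma|)$ correctly reads Theorem~\ref{t1} as the defining inequality for the joint Fr\'echet superdifferential.

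Your treatment of the endpoint $s=T$, however, has a gap. You invoke the closed-graph property for $D^+u$ (the joint-variable analogue of Proposition~\ref{pp2}) along a sequence $(s_k,\gamma(s_k))\to(T,\gamma(T))$, but Proposition~\ref{pp2} requires $u$ to be semiconcave with a \emph{single} modulus $\omega$ on a set that contains the limit point together with (eventually) all terms of the sequence. Theorem~\ref{t1} and Corollary~\ref{cor1} only yield semiconcavity with modulus $c_\varepsilon r^{1/2}$ on $[0,T-\varepsilon]\times\overline\Omega$ for each $\varepsilon>0$, and the constant $c_\varepsilon$ is not controlled as $\varepsilon\to 0$: in the proof of Lemma~\ref{teoremap} the range of admissible perturbations is $|h|\leq(\varepsilon/2)^2$, which shrinks with $\varepsilon$, and the stated removal of this constraint is achieved precisely by allowing $c_\varepsilon$ to grow. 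Hence no uniform semiconcavity modulus is available in any neighbourhood of $\{T\}\times\overline\Omega$, the point $(T,\gamma(T))$ lies outside every set on which a fixed modulus applies, and the upper-semicontinuity passage cannot be closed as written. To actually cover $s=T$ one would need a separate direct verification of the superdifferential inclusion there, using the explicit transversality condition $p(T)=Dg(\gamma(T))+\nu Db_\Omega(\gamma(T))\mathbf{1}_{\partial\Omega}(\gamma(T))$ together with a carefully chosen competitor in the dynamic programming inequality (the construction of $\widehat\gamma_{h,\sigma}$ in Lemma~\ref{semiv} degenerates at $t=T$, so it cannot be reused). This gap is minor in practice---the paper only invokes the corollary at points $(t,x)\in Q_m\cup\partial Q_m\subset(0,T)\times\overline\Omega$, e.g.\ in Corollary~\ref{coro.hajebzfd}, where $s<T$---but it should be flagged rather than attributed to the semiconcavity of Theorem~\ref{t1}.
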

\noindent
A direct consequence of Theorem \ref{t1} is that $u$ is a semiconcave function. 
\begin{corollario}\label{cor1}
	Let $\Omega\subset \mathbb{R}^n$ be a bounded open set with $C^2$ boundary. The value function \eqref{vf} is locally semiconcave with modulus $\omega(r)=Cr^{\frac{1}{2}}$ in $(0,T)\times \overline{\Omega}$.
\end{corollario}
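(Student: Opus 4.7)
\textbf{Proof plan for Corollary \ref{cor1}.}

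The plan is to derive the semiconcavity estimate as a direct algebraic consequence of the sensitivity relation in Theorem~\ref{t1}, by applying it at the midpoint of a chord and exploiting the cancellation of the first-order terms in a convex combination. Fix $\varepsilon>0$ and take two points $(t_1,x_1),(t_2,x_2)\in[0,T-\varepsilon]\times\overline{\Omega}$ with $[x_1,x_2]\subset\overline{\Omega}$ and $|(t_1,x_1)-(t_2,x_2)|$ small, and let $\lambda\in[0,1]$. Set $(t_\lambda,x_\lambda)=\lambda(t_1,x_1)+(1-\lambda)(t_2,x_2)$, pick any $\gamma\in\Gamma^*_{t_\lambda}[x_\lambda]$ and an associated dual arc $p$.

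Writing $\sigma_i=t_i-t_\lambda$ and $h_i=x_i-x_\lambda$, I would apply Theorem~\ref{t1} at $(t_\lambda,x_\lambda)$ twice, with $(\sigma,h)=(\sigma_1,h_1)$ and $(\sigma,h)=(\sigma_2,h_2)$, obtaining
\begin{equation*}
u(t_i,x_i)-u(t_\lambda,x_\lambda)\leq \sigma_i\,H(t_\lambda,x_\lambda,p(t_\lambda))+\langle p(t_\lambda),h_i\rangle+c_\varepsilon(|h_i|+|\sigma_i|)^{3/2},\qquad i=1,2.
\end{equation*}
Then I would multiply the first inequality by $\lambda$, the second by $1-\lambda$, and add them. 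Since $\sigma_1=(1-\lambda)(t_1-t_2)$, $\sigma_2=-\lambda(t_1-t_2)$, $h_1=(1-\lambda)(x_1-x_2)$, $h_2=-\lambda(x_1-x_2)$, the terms involving $H$ and $\langle p(t_\lambda),\cdot\rangle$ cancel exactly:
\begin{equation*}
\lambda\sigma_1+(1-\lambda)\sigma_2=0,\qquad \lambda h_1+(1-\lambda)h_2=0.
\end{equation*}

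Setting $r=|t_1-t_2|+|x_1-x_2|$, we have $|h_i|+|\sigma_i|\leq\max(\lambda,1-\lambda)\,r$, so the remainder contribution becomes
\begin{equation*}
c_\varepsilon\bigl[\lambda(1-\lambda)^{3/2}+(1-\lambda)\lambda^{3/2}\bigr]r^{3/2}
=c_\varepsilon\,\lambda(1-\lambda)\bigl[\lambda^{1/2}+(1-\lambda)^{1/2}\bigr]r^{3/2}\leq 2c_\varepsilon\,\lambda(1-\lambda)\,r^{3/2}.
\end{equation*}
This yields
\begin{equation*}
\lambda u(t_1,x_1)+(1-\lambda)u(t_2,x_2)-u(t_\lambda,x_\lambda)\leq 2c_\varepsilon\,\lambda(1-\lambda)\,r\cdot r^{1/2},
\end{equation*}
which is exactly the semiconcavity inequality in $(0,T)\times\overline{\Omega}$ with modulus $\omega(r)=Cr^{1/2}$, where $C=2c_\varepsilon$ (up to an equivalence of norms to replace $r$ by the Euclidean norm of $(t_1-t_2,x_1-x_2)$).

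The fact that the estimate is only local in $t$ reflects the $\varepsilon$-dependence of the constant $c_\varepsilon$ in Theorem~\ref{t1}: the neighborhood on which we obtain semiconcavity must stay in $[0,T-\varepsilon]\times\overline{\Omega}$. Since no step beyond Theorem~\ref{t1} and elementary convex-combination algebra is required, I do not foresee a serious obstacle; the only delicate point is the routine verification that the first-order cancellation really leaves behind a remainder of order $r^{3/2}$ with the correct $\lambda(1-\lambda)$ prefactor, which is handled by the elementary bound $\lambda^{1/2}+(1-\lambda)^{1/2}\leq 2$.
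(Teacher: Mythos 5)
Your proof is correct, and it takes a genuinely (if mildly) different route from the paper's. The paper applies Theorem~\ref{t1} at a fixed center $(t,x)$ with increments $(\sigma,h)$ and $(-\sigma,-h)$, obtaining only the midpoint case $\lambda=\tfrac12$ of \eqref{semicon}; it then invokes a bootstrap result from Cannarsa--Sinestrari (Theorem 2.1.10 in \cite{cs}) to pass from the midpoint inequality to semiconcavity for all $\lambda\in[0,1]$, using the continuity of $u$. You instead apply Theorem~\ref{t1} at the convex-combination point $(t_\lambda,x_\lambda)$ with the two increments $(\sigma_i,h_i)=(t_i-t_\lambda,x_i-x_\lambda)$, and the affine identities $\lambda\sigma_1+(1-\lambda)\sigma_2=0$, $\lambda h_1+(1-\lambda)h_2=0$ kill the first-order terms, leaving the estimate for \emph{every} $\lambda$ with the explicit prefactor $\lambda(1-\lambda)[\lambda^{1/2}+(1-\lambda)^{1/2}]\le 2\lambda(1-\lambda)$. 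The net effect is the same, but your version is self-contained: it neither needs the continuity of $u$ as a separate input nor the external upgrade theorem, and it produces the $\lambda(1-\lambda)$ factor directly in the form required by Definition~2.2. The small bookkeeping points you flag --- using $r=|t_1-t_2|+|x_1-x_2|$ up to norm equivalence, and the $\varepsilon$-dependence of $c_\varepsilon$ restricting to $[0,T-\varepsilon]$ --- are handled correctly and match the ``locally semiconcave in $(0,T)\times\overline{\Omega}$'' conclusion of the statement.
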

\begin{proof}
	Let $\varepsilon> 0$ and let $(t,x)\in [0,T-\varepsilon]\times\partial\Omega$. Let $\gamma \in \minimarc_t[x]$ and let $p\in\L(t,T;\R^n)$ be a dual arc assosiated with $\gamma$.
	Let $h\in \R^n$ be such that $x+h$, $x-h \in \overline{\Omega}$. Let $\sigma>0$ be such that $0\leq t-\sigma\leq t\leq t+\sigma\leq T-\varepsilon$. By Theorem \ref{t1}, there exists a constant $c_\varepsilon\geq 1$ such that
	\begin{align}\label{sgr}
	&\frac{1}{2}u(t+\sigma,x+h)+\frac{1}{2}u(t-\sigma,x-h)-u(t,x)\leq \frac{1}{2}\Big[u(t,x)+\langle p(t),h\rangle +\sigma H(t,x,p(t))\Big]\nonumber\\
	&+\frac{1}{2}\Big[u(t,x)-\langle p(t),h\rangle -\sigma H(t,x,p(t))\Big]+ c_\varepsilon(|h|+\sigma)^{\frac{3}{2}}-u(t,x)\\
	&=c_\varepsilon(|h|+\sigma)^{\frac{3}{2}}.\nonumber
	\end{align}
	Inequality \eqref{sgr} yields \eqref{semicon} for $\lambda=\frac{1}{2}$. By \cite[Theorem 2.1.10]{cs} this is enough to conclude that $u$ is semiconcave, because $u$ is continuous on $(0,T)\times \overline{\Omega}$.
\end{proof}
\subsection{Proof of Theorem \ref{t1}}
It is convenient to divide the proof of Theorem \ref{t1} in several lemmas. First, we show that $u$ is semiconcave with modulus $\omega(r)=Cr^{\frac{1}{2}}$ in $\overline{\Omega}$.
\begin{lemma}\label{teoremap}
For any $\varepsilon>0$ there exists a constant $c_\varepsilon\geq 1$ such that for any $(t,x)\in [0,T-\varepsilon]\times \overline{\Omega}$ and for any $\gamma\in \minimarc_t[x]$, denoting by $p\in \L(t,T;\R^n)$ a dual arc associated with $\gamma$, one has that
\begin{equation}\label{true}
u(t,x+h)-u(t,x)-\langle p(t),h\rangle \leq c_\varepsilon |h|^{\frac{3}{2}},
\end{equation}
for all $h\in\R^n$ such that $x+h \in \overline{\Omega}$.
\end{lemma}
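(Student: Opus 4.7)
The plan is: given $\gamma\in\minimarc_t[x]$ with dual arc $p$, construct an admissible competitor $\tilde\gamma\in\Gamma_t[x+h]$ whose cost satisfies $J_t[\tilde\gamma]-J_t[\gamma]\le\langle p(t),h\rangle+c_\varepsilon|h|^{3/2}$; since $u(t,x+h)\le J_t[\tilde\gamma]$ and $u(t,x)=J_t[\gamma]$, \eqref{true} follows. Only the values of $\gamma$ on a short interval $[t,t+\delta]$ with $\delta\in(0,\varepsilon]$ will be altered (so $t+\delta\le T$, using $t\le T-\varepsilon$), and $\delta$ will be optimized at the end. It suffices to handle small $|h|$: whenever $|h|>\varepsilon^2$, Lipschitz continuity of $u$ together with boundedness of $p(t)$ gives $u(t,x+h)-u(t,x)-\langle p(t),h\rangle\le C|h|\le C\varepsilon^{-1}|h|^{3/2}$, which can be absorbed into $c_\varepsilon$.

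For the construction I take $\phi_\delta(s)=1-(s-t)/\delta$ on $[t,t+\delta]$. A pure translation $\gamma(s)+\phi_\delta(s)h$ already joins $\gamma$ at time $t+\delta$, but may leak out of $\overline\Omega$ near boundary passages of $\gamma$; I therefore augment it with an inward push:
\begin{equation*}
\tilde\gamma(s)=\gamma(s)+\phi_\delta(s)\bigl(h-\kappa\,D\d(\gamma(s))\bigr),\qquad\kappa=C_0|h|,
\end{equation*}
with $D\d$ smoothly extended from $\Sigma_{\rho_0}$ via \eqref{dn} and $C_0$ depending only on $\Omega$. A first-order Taylor expansion of $\d$ about $\gamma(s)$, combined with $\d(\gamma(s))\le0$, $|D\d|=1$ on $\partial\Omega$, and $C_0$ large enough, yields $\d(\tilde\gamma(s))\le0$ for all $s$ and all small $|h|$. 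Setting $\tilde\gamma=\gamma$ on $[t+\delta,T]$ gives $\tilde\gamma\in\Gamma_t[x+h]$. Writing $w=\tilde\gamma-\gamma$ one has $|w|_\infty\le C|h|$ and $|\dot w|_\infty\le C|h|/\delta$.

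The cost estimate now combines convexity of $f$ in $v$ from (f1), semiconcavity in $x$ from (f4), and the mixed Lipschitz control (f3) to give
\begin{equation*}
f(s,\tilde\gamma,\dot{\tilde\gamma})-f(s,\gamma,\dot\gamma)\le\langle D_xf,w\rangle+\langle D_vf,\dot w\rangle+C\bigl(|w|^2+|\dot w|^2+|w|\,|\dot w|\bigr),
\end{equation*}
where $D_xf,D_vf$ are evaluated at $(s,\gamma(s),\dot\gamma(s))$. The maximum principle \eqref{cw} gives $D_vf(s,\gamma,\dot\gamma)=-p(s)$, and the adjoint equation \eqref{sr} reads $\dot p=-D_xf-\Lambda\,D\d(\gamma)$. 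An integration by parts on $[t,t+\delta]$, using $w(t)=h$ and $w(t+\delta)=0$, converts the linear contributions into
\begin{equation*}
\int_t^{t+\delta}\bigl[\langle D_xf,w\rangle+\langle D_vf,\dot w\rangle\bigr]\,ds=\langle p(t),h\rangle-\int_t^{t+\delta}\Lambda(s,\gamma,p)\,\langle D\d(\gamma(s)),w(s)\rangle\,ds.
\end{equation*}
By construction $\langle D\d(\gamma(s)),w(s)\rangle\le0$ whenever $\gamma(s)\in\partial\Omega$, which is precisely where $\Lambda$ may be nonzero, so the multiplier integral is nonpositive (and in any event bounded in absolute value by $C\delta|h|$). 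The quadratic remainders total $O(|h|^2/\delta+|h|^2)$, leaving
\begin{equation*}
u(t,x+h)-u(t,x)\le\langle p(t),h\rangle+C\bigl(\delta|h|+|h|^2/\delta+|h|^2\bigr).
\end{equation*}
Choosing $\delta=|h|^{1/2}$ (admissible when $|h|\le\varepsilon^2$) yields the claimed bound.

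The hardest point is the feasibility of $\tilde\gamma$: the outward normal component of $h$ must be compensated by an inward push of magnitude $\kappa=O(|h|)$, and it is precisely this \emph{linear}-in-$|h|$ correction---entering the $D_xf$-bracket---that creates the $\delta|h|$ penalty whose balancing against the kinetic $|h|^2/\delta$ forces the fractional exponent $3/2$, in place of the classical $2$ available in the unconstrained case. One must also cooperate with the multiplier term in the adjoint equation, which the inward push does by ensuring $\langle D\d(\gamma),w\rangle\le0$ at boundary contacts.
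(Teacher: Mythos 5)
Your construction has a genuine gap at the endpoint. With the push applied uniformly, you get
\begin{equation*}
\tilde\gamma(t)=\gamma(t)+\phi_\delta(t)\bigl(h-\kappa\,D\d(\gamma(t))\bigr)=x+h-C_0|h|\,D\d(x),
\end{equation*}
which is \emph{not} equal to $x+h$, so $\tilde\gamma\notin\Gamma_t[x+h]$ and the inequality $u(t,x+h)\le J_t[\tilde\gamma]$ you rely on is not available. The same error propagates into the integration by parts: with $w=\tilde\gamma-\gamma$ one has $w(t)=h-\kappa\,D\d(x)$, so the boundary term is $\langle p(t),h\rangle-\kappa\langle p(t),D\d(x)\rangle$, and the extra term is of size $O(|h|)$, which dominates the target $O(|h|^{3/2})$. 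Trying to repair this with the Lipschitz continuity of $u$ (comparing $u(t,x+h)$ to $u(t,\tilde\gamma(t))$) again costs $O(\kappa)=O(|h|)$. Since the sign of $\langle p(t),D\d(x)\rangle$ is not controlled in general, this linear error cannot simply be absorbed.

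The idea of an inward correction is sound, but the push must vanish at $s=t$ so that the competitor truly starts from $x+h$. This requires a time-modulated push of the form $\kappa(s)\sim C_0\bigl(|h|(s-t)/\delta+|h|^2\bigr)$, and the feasibility estimate then has to exploit the fact that $x+h\in\overline{\Omega}$ gives $\langle D\d(x),h\rangle\le C|h|^2$; none of this is in your write-up, and it is not a cosmetic fix. The paper avoids the issue altogether by a different competitor: it first translates, $\gamma_h(s)=\gamma(s)+(1+(t-s)/r)_+h$, and then \emph{projects} onto $\overline\Omega$ via $\widehat\gamma_h(s)=\gamma_h(s)-d_\Omega(\gamma_h(s))D\d(\gamma_h(s))$. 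Since $x+h\in\overline{\Omega}$, $d_\Omega(\gamma_h(t))=0$ and the projection is the identity at $s=t$, so $\widehat\gamma_h(t)=x+h$ automatically. The price is that $\dot{\widehat\gamma}_h$ has characteristic-function factors $\mathbf{1}_{\Omega^c}(\gamma_h(s))$ whose quadratic contribution is controlled by an integration-by-parts argument on the excursion intervals; the error budget is the same $\delta|h|+|h|^2/\delta$ you found, optimized at $\delta=|h|^{1/2}$. The rest of your argument (convexity in $v$, semiconcavity in $x$, the sign of the multiplier term, and the final balancing) matches the paper's, but as written the proof fails before it begins because the competitor is not admissible.
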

\begin{proof} 
Let $\varepsilon> 0$ and let $(t,x)\in [0,T-\varepsilon]\times\overline{\Omega}$.
Let $\gamma \in \minimarc_t[x]$ and let $p\in\L(t,T;\R^n)$ be a dual arc associated with $\gamma$. 
	Let $h\in\mathbb{R}^n$ be such that $x+h\in\overline{\Omega}$. 
Let $r\in (0,\varepsilon/2]$. We denote by $\gamma_h$ the trajectory defined by
	\begin{align*}
	\gamma_h(s)= \gamma(s)+ \Big (1+\frac{t-s}{r}\Big)_+h, \ \ \ \ s\in[t,T].
	\end{align*}
We observe that, if $|h|$ is small enough, then $d_\Omega(\gamma_h(s))\leq \rho_0$ for all $s\in[t,t+r]$, where $\rho_0$ is defined in \eqref{dn}. Indeed,
	\begin{equation*}
	d_\Omega(\gamma_h(s))\leq |\gamma_h(s)-\gamma(s)|\leq \Big|\Big (1+\frac{t-s}{r}\Big)_+h \Big|\leq | h|.
	\end{equation*}
	Thus, we have that $d_\Omega(\gamma_h(s))\leq \rho_0$ for all $s\in [t,T]$ and for $| h|\leq \rho_0$.
	Denote by $\widehat{\gamma}_h$ the projection of $\gamma_h$  on $\overline{\Omega}$, i.e.,
	\begin{equation*}
	\widehat{\gamma}_h(s)={\gamma}_h(s)-d_{\Omega}({\gamma}_h(s))D\d({\gamma}_h(s)) \ \ \ \  \forall \ s \in[t,T].
	\end{equation*}
	By construction $\widehat{\gamma}_h\in AC(0,T;\R^n)$ and for $s=t$ one has that $\widehat{\gamma}_h(t)=x+h$. Moreover,
	\begin{equation}\label{s21}
	|\widehat{\gamma}_h(s)-\gamma(s)|\leq 2 |h|, \ \ \ \  \forall s\in[t,T].
	\end{equation}
	Indeed,
	\begin{align*}
	\big|\widehat{\gamma}_h(s)-\gamma(s)\big|&=\Big|{\gamma}_h(s)-d_{\Omega}({\gamma}_h(s))D \d({\gamma}_h(s))-\gamma(s)\Big|\leq |h| + d_{\Omega}({\gamma}_h(s))\\
	& \leq |h| + |{\gamma}_h(s)-\gamma(s)|\leq 2|h|,
	\end{align*}
	for all $s \in [t,T]$.	
	Furthermore, recalling \cite[Lemma 3.1]{cc}, we have that
	\begin{align}\label{d2}
	\dot{\widehat{\gamma}}_h(s)=&\dot{\gamma}(s) -\frac{h}{r}-\big\langle D\d({\gamma}_h(s)),\dot{\gamma}(s)-\frac{h}{r}\big\rangle D\d({\gamma}_h(s))\mathbf{1}_{\Omega^c}(\gamma_h(s))\\
	&-d_{\Omega}({\gamma}_h(s))D^2\d({\gamma}_h(s))\Big(\dot{\gamma}(s)-\frac{h}{r}\Big)\nonumber,
	\end{align}	
	for a.e. $s\in[t,t+r]$.
	Since $\gamma$ is an optimal trajectory for $u$ at $(t,x)$, by the dynamic programming principle, and by the definition of $\widehat{\gamma}_h$ we have that 
	\begin{align}\label{ke}
	&u(t,x+h)-u(t,x)-\langle p(t), h \rangle \leq \int_t^{t+r} f(s,\widehat\gamma_h(s),\dot{\widehat\gamma}_h(s))\,ds+ u(t+r,\overbrace{\widehat{\gamma}_h(t+r)}^{=\gamma(t+r)})\nonumber\\
	&-\int_t^{t+r}f(s,\gamma(s),\dot{\gamma}(s)) \,ds-u(t+r,\gamma(t+r))-\langle p(t),h\rangle\\
	&=\int_t^{t+r} \Big [f(s,\widehat\gamma_h(s),\dot{\widehat\gamma}_h(s))-f(s,\gamma(s),\dot{\gamma}(s))\Big ] \,ds -\langle p(t),h\rangle. \nonumber
	\end{align}
	Integrating by parts, $\langle p(t),h\rangle$ can be rewritten as
	\begin{align*}
	&-\langle p(t),h\rangle= -\big\langle p(t+r),\overbrace{\widehat{\gamma}_h(t+r)-\gamma(t+r)}^{=0}\big\rangle +\int_t^{t+r} \frac{d}{ds}\Big[\langle p(s),\widehat{\gamma}_h(s)-\gamma(s)\rangle\Big]\,ds\\
	&=\int_t^{t+r}\big\langle \dot{p}(s),\widehat{\gamma}_h(s)-\gamma(s)\big\rangle\,ds+\int_t^{t+r} \big\langle p(s),\dot{\widehat{\gamma}}_h(s)-\dot\gamma(s)\big\rangle\,ds.
	\end{align*}
Recalling that $p$ satisfies \eqref{sr} and \eqref{cw}, we deduce that
	\begin{align}\label{ph}
	-\langle p(t),h\rangle=&-\int_t^{t+r} \Big[\big\langle D_xf(s,\gamma(s),\dot{\gamma}(s)),\widehat{\gamma}_h(s)-\gamma(s) \big\rangle+\Lambda(s,\gamma,p)\big\langle D\d(\gamma(s)),\widehat{\gamma}_h(s)-\gamma(s)\big\rangle\Big]\,ds\nonumber\\
	&-\int_t^{t+r} \big\langle D_vf(s,\gamma(s),\dot{\gamma}(s)),\dot{\widehat{\gamma}}_h(s)-\dot{\gamma}(s)\big\rangle\,ds.
	\end{align}
	Therefore, using \eqref{ph}, \eqref{ke} can be rewritten as
	\begin{align}\label{u3}
	&	u(t,x+h)-u(t,x)-\langle p(t), h \rangle\leq \nonumber\\
	&\int_t^{t+r} \Big[f(s,\widehat{\gamma}_h(s),\dot{\widehat{\gamma}}_h(s))-f(s,\gamma(s),\dot{\widehat{\gamma}}_h(s))-\big\langle D_xf(s,\gamma(s),\dot{\widehat{\gamma}}_h(s)),\widehat{\gamma}_h(s)-\gamma(s) \big\rangle\Big] \,ds\nonumber\\
	&+\int_t^{t+r}\Big[ f(s,\gamma(s),\dot{\widehat\gamma}_h(s))-f(s,\gamma(s),\dot{\gamma}(s)) -\big\langle D_vf(s,\gamma(s),\dot{\gamma}(s)),\dot{\widehat{\gamma}}_h(s)-\dot{\gamma}(s)\big\rangle\Big]\,ds\nonumber\\
	&+\int_t^{t+r} \big\langle D_xf(s,\gamma(s),\dot{\widehat{\gamma}}_h(s))-D_xf(s,\gamma(s),\dot{\gamma}(s)),\widehat{\gamma}_h(s)-\gamma(s)\big\rangle \,ds\\
	& -\int_t^{t+r}\Lambda(s,\gamma,p)\big\langle D\d(\gamma(s)),\widehat{\gamma}_h(s)-\gamma(s)\big\rangle\,ds.\nonumber
	\end{align}
	Using the assumptions (f1), (f3) and (f4) in \eqref{u3} we have that
	\begin{align*}
	&u(t,x+h)-u(t,x)-\langle p(t), h \rangle\leq c\int_t^{t+r} \big|\widehat{\gamma}_h(s)-\gamma(s)\big|^2\,ds +c\int_t^{t+r} \big|\dot{\widehat{\gamma}}_h(s)-\dot{\gamma}(s)\big|^2\,ds\\
	& +C(R)\int_t^{t+r} \big|\dot{\widehat{\gamma}}_h(s)-\dot{\gamma}(s))\big|\big|\widehat{\gamma}_h(s)-\gamma(s)\big|\,ds
	-\int_t^{t+r}\Lambda(s,\gamma,p)\big\langle D\d(\gamma(s)),\widehat{\gamma}_h(s)-\gamma(s)\big\rangle\,ds,
	\end{align*}
	for some constant $c\geq 0$.
By \eqref{s21} we observe that
\begin{equation}\label{s22}
\int_t^{t+r} \big|\widehat{\gamma}_h(s)-\gamma(s)\big|^2\,ds\leq 2r |h|^2.
\end{equation}
Moreover, recalling \eqref{d2} one has that
	\begin{align*}
	&\int_t^{t+r} \big|\dot{\widehat{\gamma}}_h(s)-\dot{\gamma}(s)\big|^2\,ds\leq\frac{|h|^2}{r}+\int_t^{t+r}\Big\langle D\d(\gamma_h(s)),\dot{\gamma}(s)-\frac{h}{r}\Big\rangle^2\mathbf{1}_{\Omega^c}(\gamma_h(s))\,ds\\
	&+\int_t^{t+r}2\Big\langle D\d(\gamma_h(s)),\frac{h}{r}\Big\rangle\Big\langle D\d(\gamma_h(s)),\dot{\gamma}(s)-\frac{h}{r}\Big\rangle\mathbf{1}_{\Omega^c}(\gamma_h(s))\,ds\\
	&+\int_t^{t+r}\Big[\Big|d_{\Omega}(\gamma_h(s))D^2\d(\gamma_h(s))\Big(\dot{\gamma}(s)-\frac{h}{r}\Big)\Big|^2+\ 2d_{\Omega}(\gamma_h(s))\Big\langle D^2\d(\gamma_h(s))\Big(\dot{\gamma}(s)-\frac{h}{r}\Big),\frac{h}{r}\Big\rangle\ \Big]\,ds\\
	 &+2\int_t^{t+r} d_{\Omega}(\gamma_h(s))\Big\langle D^2\d(\gamma_h(s))\Big(\dot{\gamma}(s)-\frac{h}{r}\Big),D\d(\gamma_h(s))\Big\rangle\Big\langle D\d(\gamma_h(s)),\dot{\gamma}(s)-\frac{h}{r}\Big\rangle\mathbf{1}_{\Omega^c}(\gamma_h(s))\,ds.
	\end{align*}
	By \cite[Lemma 3.1]{cc} we obtain that
	\begin{align*}
	&\int_t^{t+r}\Big[ \Big\langle D\d(\gamma_h(s)),\dot{\gamma}(s)-\frac{h}{r}\Big\rangle^2\mathbf{1}_{\Omega^c}(\gamma_h(s))+2\Big\langle D\d(\gamma_h(s)),\frac{h}{r}\Big\rangle\Big\langle D\d(\gamma_h(s)),\dot{\gamma}(s)-\frac{h}{r}\Big\rangle\mathbf{1}_{\Omega^c}(\gamma_h(s))\Big]\,ds\\
	&=\int_t^{t+r}\Big\langle D\d(\gamma_h(s)),\dot{\gamma}(s)-\frac{h}{r}\Big\rangle\Big\langle D\d(\gamma_h(s)),\dot{\gamma}(s)+\frac{h}{r}\Big\rangle\mathbf{1}_{\Omega^c}(\gamma_h(s))\,ds\\
	&=\int_t^{t+r}\frac{d}{ds}\Big[d_{\Omega}(\gamma_h(s))\Big]\big\langle D\d(\gamma_h(s)),\dot{\gamma}(s)+\frac{h}{r}\big\rangle\mathbf{1}_{\Omega^c}(\gamma_h(s))\,ds.
	\end{align*}
	Recalling that $\gamma_h(t)$, $\gamma_h(t+r)\in \overline{\Omega}$, we observe that
	\begin{equation*}
\Big\{ s\in\ [t,t+r]:\gamma_h(s)\in \overline{\Omega}^c \Big\}=	\Big\{ s\in\ (t,t+r):\gamma_h(s)\in \overline{\Omega}^c \Big\}=\bigcup_{i\in\mathbb{N}}(s_i,t_i),
	\end{equation*}
	where $(s_i,t_i)\cap(s_j,t_j)=\emptyset$ for all $i\neq j$. Hence,
	\begin{align*}
	&\int_t^{t+r}\frac{d}{ds}\Big[d_{\Omega}(\gamma_h(s))\Big]\Big\langle D\d(\gamma_h(s)),\dot{\gamma}(s)+\frac{h}{r}\Big\rangle\mathbf{1}_{\Omega^c}(\gamma_h(s))\,ds\\
	&=\sum_{i\in\mathbb{N}}\int_{s_i}^{t_i}\frac{d}{ds}\Big[d_{\Omega}(\gamma_h(s))\Big]\Big\langle D\d(\gamma_h(s)),\dot{\gamma}(s)+\frac{h}{r}\Big\rangle\,ds.
	\end{align*}
	Integrating by parts, we get
	\begin{align*}
	&\sum_{i\in\mathbb{N}}\int_{s_i}^{t_i}\frac{d}{ds}\Big[d_{\Omega}(\gamma_h(s))\Big]\Big\langle D\d(\gamma_h(s)),\dot{\gamma}(s)+\frac{h}{r}\Big\rangle\,ds=\sum_{i\in\mathbb{N}}\Big[d_{\Omega}(\gamma_h(s))\Big\langle D\d(\gamma_h(s)),\dot{\gamma}(s)+\frac{h}{r}\Big\rangle\Big]\Big|_{s_i}^{t_i}\\
	&-\sum_{i\in\mathbb{N}}\int_{s_i}^{t_i}d_{\Omega}(\gamma_h(s))\frac{d}{ds}\Big[\Big\langle D\d(\gamma_h(s)),\dot{\gamma}(s)+\frac{h}{r}\Big\rangle\Big]\,ds.
	\end{align*}
	Owing to $d_{\Omega}(\gamma_h(s_i))=d_{\Omega}(\gamma_h(t_i))=0$ for $i\in\mathbb{N}$, $d_{\Omega}(\gamma_h(t+r))=d_{\Omega}(\gamma(t+r))=0$ and $d_{\Omega}(\gamma_h(t))=d_{\Omega}(\gamma(t))=0$, one has that
	\begin{equation}
	\sum_{i\in\mathbb{N}}\Big[d_{\Omega}(\gamma_h(s))\Big\langle D\d(\gamma_h(s)),\dot{\gamma}(s)+\frac{h}{r}\Big\rangle\Big]\Big|_{s_i}^{t_i}=0.
	\end{equation}
From now on, we assume that $|h|\leq r$. Then, recalling that $\gamma\in C^{1,1}([t,T];\overline{\Omega})$, one has that 
	$$
	\frac{d}{ds}\Big[\big\langle D\d(\gamma_h(s)),\dot{\gamma}(s)+\frac{h}{r}\big\rangle\Big]\leq C,
	$$ 
	where the constant $C$ does not dependent on $h$ and $r$. Hence, we deduce that
	\begin{equation*}
	\left|\sum_{i\in\mathbb{N}}\int_{s_i}^{t_i}d_{\Omega}(\gamma_h(s))\frac{d}{ds}\Big[\Big\langle D\d(\gamma_h(s)),\dot{\gamma}(s)+\frac{h}{r}\Big\rangle\Big]\,ds\right|\leq C|h|r,
	\end{equation*}
	and so
	\begin{equation}
	\int_t^{t+r}\frac{d}{ds}\Big[d_{\Omega}(\gamma_h(s))\Big]\big\langle D\d(\gamma_h(s)),\dot{\gamma}(s)+\frac{h}{r}\big\rangle\mathbf{1}_{\Omega^c}(\gamma_h(s))\,ds\leq C|h|r.
	\end{equation}
	Moreover, we have that
	\begin{align*}
	\int_t^{t+r}\Big|d_{\Omega}(\gamma_h(s))D^2\d(\gamma_h(s))\Big(\dot{\gamma}(s)-\frac{h}{r}\Big)\Big|^2\,ds&\leq C\int_t^{t+r}\Big|d_{\Omega}(\gamma_h(s))\Big|^2\Big|\dot\gamma(s)-\frac{h}{r}\Big|^2\,ds\\
	&\leq C\left[ r|h|^2+\frac{|h|^4}{r}+|h|^3\right],
	\end{align*}
	and
	\begin{align*}
	\int_t^{t+r} d_{\Omega}(\gamma_h(s))\Big\langle D^2\d(\gamma_h(s))\Big(\dot{\gamma}(s)-\frac{h}{r}\Big),\frac{h}{r}\Big\rangle\,ds\leq C\left(|h|^2+\frac{|h|^3}{r}\right),
	\end{align*}
	for some constant $C\geq 0$ independent on $h$ and $r$.
	Since $\langle D^2\d(x),D\d(x)\rangle=0$ $\forall x\in\mathbb{R}^n$ one has that
	\begin{equation*}
	\int_t^{t+r}d_{\Omega}(\gamma_h(s))\Big\langle D^2\d(\gamma_h(s))\Big(\dot{\gamma}(s)-\frac{h}{r}\Big),D\d(\gamma_h(s))\Big\rangle\big\langle D\d(\gamma_h(s)),\dot{\gamma}(s)-\frac{h}{r}\big\rangle\mathbf{1}_{{\Omega}^c}(\gamma_h(s))\,ds=0.
	\end{equation*}
	Hence,
	\begin{equation}\label{ss}
	\int_t^{t+r} \big|\dot{\widehat{\gamma}}_h(s)-\dot{\gamma}(s)\big|^2\,ds\leq c\left[\frac{|h|^2}{r}+ r|h|^2 +|h|^2 +|h|r+\frac{|h|^4}{r}+|h|^3+\frac{|h|^3}{r}\right].
	\end{equation}
	Moreover, using Young's inequality, (\ref{ss}) and (\ref{s22}), we deduce that
	\begin{align}\label{c2}
	&\int_t^{t+r} \big|\dot{\widehat{\gamma}}_h(s)-\dot{\gamma}(s))\big|\big|\widehat{\gamma}_h(s)-\gamma(s)\big|\,ds\leq
	\frac{1}{2}\int_t^{t+r}\big|\dot{\widehat{\gamma}}_h(s)-\dot{\gamma}(s))\big|^2\,ds + \frac{1}{2}\int_t^{t+r}\big|\widehat{\gamma}_h(s)-\gamma(s)\big|^2\,ds\nonumber\\
	&\leq \frac{1}{2}c \Big(\frac{|h|^2}{r}+ r|h|^2 +|h|^2 +|h|r+\frac{|h|^4}{r}+|h|^3 +\frac{|h|^3}{r}\Big),
	\end{align}
	where $c$ is a constant independent of $h$ and $r$. Moreover, since $$
	\int_t^{t+r}\Lambda(s,\gamma,p)\big\langle D\d(\gamma(s)),\widehat{\gamma}_h(s)-\gamma(s)\big\rangle\,ds\leq r|h|,
	$$
	and using \eqref{ss} and \eqref{c2} we have that 
	\begin{equation}\label{c1}
	u(t,x+h)-u(t,x)-\langle p(t), h \rangle \leq c\Big(\frac{|h|^2}{r}+ r|h|^2+ |h|^2+r |h|+\frac{|h|^4}{r}+|h|^3+\frac{|h|^3}{r}\Big).
	\end{equation}
	Thus, choosing $r=|h|^{\frac{1}{2}}$ in \eqref{c1}, we conclude that \eqref{true} holds. Note that the constraint on the size of $|h|$---namely $|h|\leq \rho_0$, $|h|\leq r$ and $|h|= r^2 \leq (\varepsilon/2)^2$---depends on $\varepsilon$ but not on $(t,x)$. This constraint can be removed by changing the constant $c_\varepsilon$ if necessary. This completes the proof.
\end{proof}
\begin{lemma}\label{semiv2}
For any $\varepsilon> 0$ there exists a constant $c_\varepsilon\geq 1$ such that for all $(t,x)\in[0,T-\varepsilon]\times \overline{\Omega}$ and for all $\gamma\in \minimarc_t[x]$, denoting by $p\in \L(t,T;\R^n)$ a dual arc associated with $\gamma$, one has that 
	\begin{align*}
	u(t+\sigma,x+h)-u(t,x)\leq \langle p(t),h\rangle + \sigma H(t,x,p(t)) +c_\varepsilon(|h|+|\sigma|)^\frac{3}{2}, 
	\end{align*}
	for any $h\in \mathbb{R}^n$ such that $x+h \in \overline{\Omega}$, and for any $\sigma>0$ such that $0 \leq t+\sigma\leq T-\varepsilon$.
\end{lemma}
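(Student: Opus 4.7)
The plan is to bootstrap Lemma~\ref{teoremap} (the pure spatial estimate at a fixed time) to the space--time estimate by traveling along the optimal trajectory $\gamma$ between time $t$ and time $t+\sigma$, then applying the spatial estimate from the new base point. More precisely, I will first use the dynamic programming principle along $\gamma$, which yields
\begin{equation*}
u(t+\sigma,\gamma(t+\sigma)) - u(t,x) = -\int_t^{t+\sigma} f(s,\gamma(s),\dot\gamma(s))\,ds.
\end{equation*}
Because the restriction of $\gamma$ to $[t+\sigma,T]$ is a minimizer for $u(t+\sigma,\gamma(t+\sigma))$, and the restriction of $p$ is an associated dual arc, Lemma~\ref{teoremap} applied at $(t+\sigma,\gamma(t+\sigma))$ with increment $y := x+h-\gamma(t+\sigma)$ gives
\begin{equation*}
u(t+\sigma,x+h) - u(t+\sigma,\gamma(t+\sigma)) \leq \langle p(t+\sigma), x+h-\gamma(t+\sigma)\rangle + c_\varepsilon|x+h-\gamma(t+\sigma)|^{3/2}.
\end{equation*}
Since $\|\dot\gamma\|_\infty \leq L^\star$ by \eqref{lstar}, we have $|\gamma(t+\sigma)-x|\leq L^\star\sigma$, so the error term is bounded by $c'_\varepsilon(|h|+\sigma)^{3/2}$.

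Adding the two identities and writing $x+h-\gamma(t+\sigma) = h - \int_t^{t+\sigma}\dot\gamma(s)\,ds$, I obtain
\begin{equation*}
u(t+\sigma,x+h)-u(t,x) \leq \langle p(t+\sigma),h\rangle - \int_t^{t+\sigma}\bigl[\langle p(t+\sigma),\dot\gamma(s)\rangle + f(s,\gamma(s),\dot\gamma(s))\bigr]\,ds + c'_\varepsilon(|h|+\sigma)^{3/2}.
\end{equation*}
The key algebraic move is to introduce the Hamiltonian through \eqref{cw}: for each $s$, $-\langle p(s),\dot\gamma(s)\rangle - f(s,\gamma(s),\dot\gamma(s)) = H(s,\gamma(s),p(s))$. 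Replacing $p(t+\sigma)$ by $p(s)$ inside the integral costs an error $\int_t^{t+\sigma}\langle p(s)-p(t+\sigma),\dot\gamma(s)\rangle\,ds$ bounded by $C\sigma^2$ by the Lipschitz regularity of $p$ and the bound \eqref{lstar}. Therefore the integral equals $\int_t^{t+\sigma}H(s,\gamma(s),p(s))\,ds + O(\sigma^2)$.

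It remains to replace $\sigma H(t,x,p(t))$ for the integral and $\langle p(t),h\rangle$ for $\langle p(t+\sigma),h\rangle$. The latter costs $|\langle p(t+\sigma)-p(t),h\rangle|\leq C\sigma|h|\leq \tfrac{C}{2}(\sigma^2+|h|^2)$, while the former costs $\int_t^{t+\sigma}|H(s,\gamma(s),p(s))-H(t,x,p(t))|\,ds \leq C\sigma^2$ using the local Lipschitz continuity of $H$ together with $|s-t|\leq\sigma$, $|\gamma(s)-x|\leq L^\star\sigma$, and $|p(s)-p(t)|\leq C\sigma$. Since $\sigma^2+|h|^2\leq (|h|+\sigma)^2 \leq (|h|+\sigma)^{3/2}$ whenever $|h|+\sigma$ is small enough (depending on $\varepsilon$ but not on $(t,x)$), all remainders are absorbed into a term of the form $c_\varepsilon(|h|+\sigma)^{3/2}$, and for large $|h|+\sigma$ the bound is trivially enforced by inflating $c_\varepsilon$ using the global Lipschitz continuity of $u$. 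I expect the only delicate point to be the bookkeeping that ensures every error---the spatial $(|h|+L^\star\sigma)^{3/2}$, the two $O(\sigma^2)$ remainders, and the $O(\sigma|h|)$ cross term---is uniformly controlled by $(|h|+\sigma)^{3/2}$ with a constant depending only on $\varepsilon$, which is precisely why the statement restricts to $t+\sigma\leq T-\varepsilon$.
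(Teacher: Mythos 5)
Your proposal follows essentially the same route as the paper: apply the dynamic programming principle along the optimal $\gamma$ from $t$ to $t+\sigma$, invoke Lemma~\ref{teoremap} at the shifted base point $(t+\sigma,\gamma(t+\sigma))$ with increment $x+h-\gamma(t+\sigma)$, introduce the Hamiltonian through the pointwise conjugacy relation along the extremal, and absorb all lower-order errors---the $O(\sigma|h|)$, $O(\sigma^2)$, and $(|h|+L^\star\sigma)^{3/2}$ terms---into $c_\varepsilon(|h|+\sigma)^{3/2}$ using the Lipschitz regularity of $p$ and the bound $\|\dot\gamma\|_\infty\leq L^\star$. The bookkeeping is correct and matches the paper's estimates, so there is nothing to add.
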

\begin{proof}
Let $\varepsilon > 0$ and let $(t,x)\in [0,T-\varepsilon]\times \overline{\Omega}$. Let $\sigma>0$ be such that $0\leq t\leq t+\sigma\leq T-\varepsilon$ and let $h\in\mathbb{R}^n$ be such that $x+h \in \overline{\Omega}$. Let $\gamma\in \minimarc_t[x]$ and let $p\in \L(t,T;\R^n)$ be a dual arc associated with $\gamma$. By dynamical programming principle one has that
\begin{align*}
u(t+\sigma,x+h)-u(t,x)=u(t+\sigma,x+h)-u(t+\sigma,\gamma(t+\sigma))-\int_t^{t+\sigma} f(s,\gamma(s),\dot{\gamma}(s))\,ds.
\end{align*}
By Lemma \ref{teoremap} there exists a constant $c_\varepsilon\geq 1$ such that 
\begin{equation}\label{s31}
u(t+\sigma,x+h)-u(t+\sigma,\gamma(t+\sigma))\leq \langle p(t+\sigma), x+h -\gamma(t+\sigma)\rangle +c_\varepsilon\big(\big|x+h-\gamma(t+\sigma)\big|\big)^\frac{3}{2}.
\end{equation}
By Theorem \ref{51}, we have that
\begin{equation}\label{s1}
\big|x+h-\gamma(t+\sigma)\big|\leq |h|+\big|x-\gamma(t+\sigma)\big|=|h|+\left|  \int_t^{t+\sigma}\dot{\gamma}(s)\,ds\right |\leq |h|+L^\star|\sigma|.
\end{equation}
Since $\gamma\in C^{1,1}([t,T];\overline{\Omega})$, $p\in \L(t,T;\mathbb{R}^n)$, we deduce that
\begin{align}\label{s2}
&\big\langle p(t+\sigma),x+h-\gamma(t+\sigma)\big\rangle=\langle p(t+\sigma),h\rangle +\langle p(t+\sigma),\gamma(t)-\gamma(t+\sigma)\rangle\nonumber\\
&= \langle p(t+\sigma)-p(t),h\rangle + \langle p(t),h\rangle +\int_{t+\sigma}^t \langle p(t+\sigma),\dot{\gamma}(s)\rangle\,ds\\
&\leq \L(p)|\sigma||h|+ \langle p(t),h\rangle -\int_t^{t+\sigma} \langle p(s),\dot{\gamma}(s)\rangle\,ds+\L(p)|\sigma|^2.\nonumber
\end{align}
Using \eqref{s1} and \eqref{s2} in \eqref{s31}, one has that
\begin{align}\label{r3}
u(t+\sigma,x+h)-u(t,x)&\leq \langle p(t),h\rangle -\int_t^{t+\sigma} \big[ f(s,\gamma(s),\dot{\gamma}(s))+\langle p(s),\dot{\gamma}(s)\rangle\big ]\,ds+\L(p)|\sigma||h|\nonumber\\
&+\L(p)|\sigma|^2+ c_\varepsilon(|h|+|\sigma|)^\frac{3}{2}.
\end{align}
By the definition of $H$ we have that
\begin{equation*}
 -\int_t^{t+\sigma} \big[f(s,\gamma(s),\dot{\gamma}(s))+\langle p(s),\dot{\gamma}(s)\rangle\big]\,ds=\int_t^{t+\sigma}H(s,\gamma(s),p(s))\,ds
\end{equation*}
Since $\gamma \in C^{1,1}([t,T];\overline{\Omega})$ and $p\in \L(t,T;\R^n)$, we get
\begin{eqnarray}\label{fd}
&H(s,\gamma(s),p(s))=H(t,\gamma(t),p(t))+ C( |s-t|+|\gamma(s)-\gamma(t)|+|p(s)-p(t)|)
\nonumber\\
&
\leq C|\sigma|,
\end{eqnarray}
where $C$ is a positive constant independent on $h$ and $\sigma$.\\ 
Using \eqref{fd} in \eqref{r3} we conclude that
\begin{align*}
u(t+\sigma,x+h)-u(t,x)\leq \langle p(t),h\rangle + \sigma H(t,x,p(t)) + c_\varepsilon(|h|+|\sigma|)^\frac{3}{2}.
\end{align*}
This completes the proof.
\end{proof}
\begin{lemma}\label{semiv}
For any $\varepsilon> 0$ there exists a constant $c_\varepsilon\geq 1$ such that for any $(t,x)\in [0,T-\varepsilon]\times \overline{\Omega}$ and for any $\gamma\in \minimarc_t[x]$, denoting by $p\in \L(t,T;\R^n)$ a dual arc associated with $\gamma$, one has that
\begin{equation}\label{ut}
u(t-\sigma,x+h)-u(t,x)\leq \langle p(t),h\rangle -\sigma H(t,x,p(t))+c_\varepsilon(|h|+|\sigma|)^\frac{3}{2},
\end{equation}
for any $h\in \mathbb{R}^n$ such that $x+h \in \overline{\Omega}$, and for any $\sigma>0$ such that $0 \leq t-\sigma\leq T-\varepsilon$.
\end{lemma}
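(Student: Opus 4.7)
The plan is to mimic the approach of Lemma \ref{semiv2}, invoking the spatial semiconcavity estimate of Lemma \ref{teoremap} at time $t$ after pushing a test trajectory from $(t-\sigma,x+h)$ forward to time $t$. Let $\gamma\in\minimarc_t[x]$ and set $v:=\dot\gamma(t)$, so that $|v|\leq L^\star$ by Theorem \ref{51}(iii); moreover, when $x\in\partial\Omega$, admissibility of $\gamma$ forces $\dot\gamma(t)\in T_{\overline{\Omega}}(x)$, hence $\langle v,\nu(x)\rangle\leq 0$.

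I will first construct the test trajectory. Define the straight-line extension
\[
\gamma_0(s):=x+h+(s-(t-\sigma))\,v,\qquad s\in[t-\sigma,t],
\]
so that $\gamma_0(t-\sigma)=x+h$ and $\gamma_0(t)=x+h+\sigma v$. Using $\d(x+h)\leq 0$, $\langle v,\nu(x)\rangle\leq 0$, and the $C^2$-smoothness of $\d$ near $\partial\Omega$, a first-order Taylor expansion of $\d\circ\gamma_0$ will give the key bound $\d(\gamma_0(s))\leq C\sigma(|h|+\sigma)$, and therefore
\[
d_\Omega(\gamma_0(s))\leq C(|h|+\sigma)^2\leq \rho_0
\]
as soon as $|h|+\sigma$ is small enough. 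I will then project onto $\overline{\Omega}$ by
\[
\tilde\gamma(s):=\gamma_0(s)-d_\Omega(\gamma_0(s))\,D\d(\gamma_0(s)),
\]
which is absolutely continuous with values in $\overline{\Omega}$, satisfies $\tilde\gamma(t-\sigma)=x+h$, and obeys $|\tilde\gamma(s)-\gamma_0(s)|\leq C(|h|+\sigma)^2$ for all $s\in[t-\sigma,t]$.

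The dynamic programming principle then yields
\[
u(t-\sigma,x+h)\leq \int_{t-\sigma}^t f(s,\tilde\gamma(s),\dot{\tilde\gamma}(s))\,ds + u(t,\tilde\gamma(t)).
\]
Since $|\tilde\gamma(t)-x|\leq |h|+\sigma L^\star + C(|h|+\sigma)^2 \leq C(|h|+\sigma)$, Lemma \ref{teoremap} will apply and produce
\[
u(t,\tilde\gamma(t))-u(t,x)\leq \langle p(t),h\rangle + \sigma\langle p(t),v\rangle + O\bigl((|h|+\sigma)^2\bigr)+ c_\varepsilon(|h|+\sigma)^{3/2}.
\]
To handle the integral I will Taylor expand $f(s,\tilde\gamma,\dot{\tilde\gamma})$ around $(t,x,v)$, using the Lipschitz dependence (f2) in $s$, assumption (f3) to control $D_xf$, and the identity $D_vf(t,x,v)=-p(t)$ that follows from the maximality condition \eqref{cw}. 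The first-order contribution in $\dot{\tilde\gamma}-v$ integrates to $-\langle p(t),\tilde\gamma(t)-(x+h)-\sigma v\rangle=\langle p(t),d_\Omega(\gamma_0(t))D\d(\gamma_0(t))\rangle=O((|h|+\sigma)^2)$. Combining everything with the identity $-H(t,x,p(t))=\langle p(t),v\rangle+f(t,x,v)$ will finally yield \eqref{ut}.

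The main technical obstacle is to control the second-order correction $\int_{t-\sigma}^t|\dot{\tilde\gamma}(s)-v|^2\,ds$ coming from the $D^2_{vv}f$ term. By \cite[Lemma 3.1]{cc},
\[
\dot{\tilde\gamma}(s)-v = -\langle D\d(\gamma_0(s)),v\rangle D\d(\gamma_0(s))\,\mathbf{1}_{\Omega^c}(\gamma_0(s)) - d_\Omega(\gamma_0(s))\,D^2\d(\gamma_0(s))\,v,
\]
so the leading term $\langle D\d,v\rangle^2\mathbf{1}_{\Omega^c}$ is not pointwise small. As in the proof of Lemma \ref{teoremap}, I will rewrite it as $\langle D\d,v\rangle\frac{d}{ds}[d_\Omega(\gamma_0)]$ and integrate by parts on each connected component of $\{s:\gamma_0(s)\in\Omega^c\}$, exploiting the orthogonality $\langle D^2\d,D\d\rangle=0$ and the vanishing of $d_\Omega(\gamma_0)$ at the endpoints of every such component. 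This reduces the contribution to an integral of $d_\Omega(\gamma_0)\langle D^2\d\,v,v\rangle$, of size $O(\sigma(|h|+\sigma)^2)\leq C(|h|+\sigma)^3$, which is safely absorbed into the target $(|h|+\sigma)^{3/2}$ remainder.
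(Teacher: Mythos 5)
Your construction is genuinely different from the paper's. The paper re-uses the projected trajectory $\widehat\gamma_h$ built in Lemma~\ref{teoremap} (with parameter $r=(\sigma+|h|)^{1/2}$), shifts it in time by $\sigma$, and runs the dynamic programming principle from $t-\sigma$ to $t$; the crucial point is that $\widehat\gamma_h$ rejoins the optimal trajectory $\gamma$ at time $t+r$, so every boundary term in the integration-by-parts computation inherited from Lemma~\ref{teoremap} automatically vanishes. You instead build a fresh straight line $\gamma_0$ with constant velocity $v=\dot\gamma(t)$ emanating from $(t-\sigma,x+h)$ and project it. Conceptually this is a little lighter (no interpolation term of the form $h/r$, no time shift), and the final assembly via $-H(t,x,p(t))=\langle p(t),v\rangle+f(t,x,v)$ is the same. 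What you pay for this is a new boundary-term analysis: your trajectory does \emph{not} land on $\overline\Omega$ at time $t$, so you must estimate both $d_\Omega(\gamma_0(t))$ and the left-over endpoint term $d_\Omega(\gamma_0(t))\langle D\d(\gamma_0(t)),v\rangle$ in the integration by parts, whereas the paper never faces this issue.

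There is a soft spot in that new boundary analysis. Your key bound $d_\Omega(\gamma_0(s))\leq C(|h|+\sigma)^2$ relies on $\langle v,\nu(x)\rangle\leq 0$, which you justify only for $x\in\partial\Omega$ via the contingent cone. When $x\in\Omega$ but close to $\partial\Omega$ (say $\d(x)=-\delta$ with $\delta$ comparable to $|h|+\sigma$), the normal $\nu(x)$ is not defined and the relevant quantity $\langle D\d(x),v\rangle$ need not be $\leq 0$; an admissible $C^{1,1}$ trajectory with $\gamma(t)=x$ only forces $\langle D\d(x),v\rangle\leq C\delta^{1/2}$ (by Taylor-expanding $\d\circ\gamma$ forward and using $\d(\gamma)\leq 0$ on $[t,t+\tau_0]$ for a uniform $\tau_0>0$, available since $t\leq T-\varepsilon$). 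This only gives $\d(\gamma_0(s))\leq C\sigma(|h|+\sigma)^{1/2}$, hence $d_\Omega(\gamma_0(s))\leq C(|h|+\sigma)^{3/2}$, which is strictly weaker than what you claim. Fortunately the rest of your argument tolerates the weaker rate: the first-order remainder $\langle p(t),d_\Omega(\gamma_0(t))D\d(\gamma_0(t))\rangle$ is then $O((|h|+\sigma)^{3/2})$, the integration-by-parts boundary term is bounded by $C(|h|+\sigma)^{3/2}\cdot(|h|+\sigma)^{1/2}=O((|h|+\sigma)^{2})$, and the remaining contributions are of even lower order, so the $c_\varepsilon(|h|+\sigma)^{3/2}$ target is still reached. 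You should also add a sentence addressing the case $|h|+\sigma$ not small (absorbed into $c_\varepsilon$ via the global Lipschitz bound on $u$), exactly as the paper does at the end of Lemma~\ref{teoremap}. With these two corrections the proof is complete and valid.
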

\begin{proof}
Let $\varepsilon > 0$ and let $(t,x)\in [0,T-\varepsilon]\times \overline{\Omega}$. Let $\sigma>0$ be such that $0\leq t-\sigma\leq T-\sigma\leq T-\varepsilon$ and let $h\in\mathbb{R}^n$ be such that $x+h \in \overline{\Omega}$. Let $\gamma\in \minimarc_t[x]$ and let $p\in \L(t,T;\R^n)$ be a dual arc associated with $\gamma$. 
We define $\gamma_h$ and $\widehat{\gamma}_h$ as in the proof of Lemma \ref{teoremap} for $r=(\sigma+|h|)^{\frac{1}{2}}$. By  \eqref{s21} and \eqref{ss} we have, for any $s\in [t,T]$,   
\begin{equation}\label{lazezrd}
|\widehat{\gamma}_h(s)-\gamma(s)|\leq 2 |h|, \ \ \ \  
	\int_t^{t+r} \big|\dot{\widehat{\gamma}}_h(s)-\dot{\gamma}(s)\big|^2\,ds\leq c(\sigma+|h|)^{\frac{3}{2}}.
\end{equation}
We finally set 
	\begin{align*}
	\widehat{\gamma}_{h,\sigma}(s) :=  \begin{cases}
	\widehat{\gamma}_h(\sigma+s) \ \ \ \ \ &s\in[t-\sigma,T-\sigma],\\
	\widehat{\gamma}_h(T) & s\in[T-\sigma,T]
	\end{cases}
	\end{align*}
and note that $\widehat{\gamma}_{h,\sigma}(t-\sigma) = \widehat{\gamma}_h(t)= x+h$. By the dynamic programming principle we obtain
	\begin{align}
	u(t-\sigma, x+h) -u(t,x) & \leq \int_{t-\sigma}^{t} f(s,\widehat{\gamma}_{h,\sigma}(s), \dot{\widehat{\gamma}}_{h,\sigma}(s))ds+ u(t, \widehat{\gamma}_{h,\sigma}(t))-u(t,x).
	\label{ukhebsd1} 
	\end{align}
We start with the estimate of the first term on the right-hand side of \eqref{ukhebsd1}. By  using the two inequalities in \eqref{lazezrd} and the regularity of $f$,  we have
	\begin{align*}
	& \int_{t-\sigma}^t f(s,\widehat{\gamma}_{h,\sigma}(s), \dot{\widehat{\gamma}}_{h,\sigma}(s))ds =
	\int_{t}^{t+\sigma} f(s-\sigma,\widehat{\gamma}_{h}(s), \dot{\widehat{\gamma}}_{h}(s))ds 
	\notag \\
	& \qquad 
	\leq 
	\int_{t}^{t+\sigma} f(s, \gamma(s), \dot{\widehat{\gamma}}_{h}(s))ds + c\sigma(\sigma+ |h|) \notag  \\
	& \qquad
	\leq 
	\int_{t}^{t+\sigma} \Bigl( f(s, \gamma(s), \dot \gamma(s))+ \langle D_vf(s,\gamma(s),\dot \gamma(s)), \dot{\widehat{\gamma}}_{h}(s)-\dot \gamma(s)\rangle  +c |\dot{\widehat{\gamma}}_{h}(s)-\dot \gamma(s)|^2\Bigr) ds  +c\sigma(\sigma+ |h|) \notag\\
	&\qquad 
	\leq 
	\int_{t}^{t+\sigma} \Bigl( f(s, \gamma(s), \dot \gamma(s))+ \langle D_vf(s,\gamma(s),\dot \gamma(s)), \dot{\widehat{\gamma}}_{h}(s)-\dot \gamma(s)\rangle  \Bigr) ds  +c(\sigma+ |h|)^{\frac32}. \notag
	\end{align*} 
Therefore, recalling that $D_vf(s,\gamma(s),\dot \gamma(s))=-p(s)$, $p$ is uniformly Lipschitz continuous, and $ \dot{\widehat{\gamma}}_{h}$ is bounded we obtain  
	\begin{align} 
	& \int_{t-\sigma}^t f(s,\widehat{\gamma}_{h,\sigma}(s), \dot{\widehat{\gamma}}_{h,\sigma}(s))ds \label{ukhebsd2} \\
	&\qquad 
	\leq
	\int_{t}^{t+\sigma} \Bigl( f(s, \gamma(s), \dot \gamma(s))- \langle p(s), \dot{\widehat{\gamma}}_{h}(s)-\dot \gamma(s)\rangle  \Bigr) ds  +c(\sigma+ |h|)^{\frac32}. \notag\\
	& \qquad \leq 
	\int_{t}^{t+\sigma}  \Bigl(f(s, \gamma(s), \dot \gamma(s))+ \langle p(s), \dot \gamma(s)\rangle \Bigr) ds - \langle p(t), \widehat{\gamma}_{h}(t+\sigma)-(x+h)\rangle  +c(\sigma+ |h|)^{\frac32}.	 \notag
	\end{align}
On the other hand, the second term in the right-hand side of \eqref{ukhebsd1} can be estimated by using Lemma \ref{teoremap} and the first inequality in \eqref{lazezrd}:
	\begin{align} 
	 u(t, \widehat{\gamma}_{h,\sigma}(t))-u(t,x) & \leq  \langle p(t), \widehat{\gamma}_{h}(t+\sigma)- x\rangle + c|\widehat{\gamma}_{h}(t+\sigma)- x|^{\frac32} \notag \\
	 & \leq \langle p(t), \widehat{\gamma}_{h}(t+\sigma)- x\rangle + c(\sigma+|h|)^{\frac32}. \label{ukhebsd3}
	 \end{align}
	Combining \eqref{ukhebsd1}, \eqref{ukhebsd2} and \eqref{ukhebsd3}, we obtain that
	\begin{align*}
	& u(t-\sigma, x+h) -u(t,x)   \\
	 & \qquad \leq \int_t^{t+\sigma} (f(s, \gamma(s), \dot \gamma(s))+\langle p(s),  \dot \gamma(s)\rangle ) ds+\langle p(t),h\rangle+c(\sigma+|h|)^{\frac32}.
	\end{align*}
 Then \eqref{cw} and the optimality of $\gamma$ imply that
	\begin{align*}
	 u(t-\sigma, x+h) -u(t,x)   & \leq - \int_t^{t+\sigma} H(s, \gamma(s), p(s)) ds+ \langle p(t), h\rangle +c(\sigma+|h|)^{\frac32}\\
	 &\leq -\sigma H(t, x, p(t))+ \langle p(t), h\rangle +c(\sigma+|h|)^{\frac32}, 
	\end{align*}
where we used again the Lipschitz continuity of $s\to H(s, \gamma(s), p(s))$. 
	This completes the proof.
\end{proof}
\noindent
We observe that Theorem \ref{t1} is a direct consequence of Lemma \ref{semiv2} and Lemma \ref{semiv}.
\section{The Mean Field Game system: from mild to pointwise solutions}
In this section we return to  mean field games with state constraints. Our aim is to give a meaning to system \eqref{mfg}. For this, we first recall the notion of constrained MFG equilibria and mild solutions of the constrained MFG problem, as introduced in \cite{cc}. Then, we investigate further regularity properties of the value function $u$. We conclude by the interpretation of the continuity equation for $m$. 
\subsection{Assumptions}\label{ipotesi}
Let $\mathcal{P}(\overline{\Omega})$  be the set of all Borel probability measures on $\overline\Omega$ endowed with the Kantorovich-Rubinstein distance $d_1$ defined in \eqref{2.7}. Let $U$ be an open subset of $\mathbb{R}^n$ and such that $\overline{\Omega}\subset U$. Assume that $F:U\times\mathcal{P}(\overline{\Omega})\rightarrow \mathbb{R}$ and $G:U\times \mathcal{P}(\overline{\Omega})\rightarrow \mathbb{R}$ satisfy the following hypotheses.
\begin{enumerate}
	\item[(D1)] For all $x\in U$, the functions $m\longmapsto F(x,m)$ and  $m\longmapsto G(x,m)$ are Lipschitz continuous, i.e., there exists $\kappa\geq 0$ such that
	\begin{align}
	|F(x,m_1)-F(x,m_2)|+ |G(x,m_1)-G(x,m_2)| \leq \kappa d_1(m_1,m_2),\label{lf} 
	\end{align}
	for any $m_1$, $m_2 \in\mathcal{P}(\overline{\Omega})$.
	\item[(D2)] For all $m\in \mathcal{P}(\overline{\Omega})$, the functions $x\longmapsto G(x,m)$ and $x\longmapsto F(x,m)$ belong to $C^1_b(U)$. Moreover
	\begin{equation*}
	|D_xF(x,m)|+|D_xG(x,m)|\leq \kappa, \ \ \ \ \forall \ x\in U, \ \forall \ m\in \mathcal{P}(\overline{\Omega}).
	\end{equation*}
\item[(D3)] For all $m\in \mathcal{P}(\overline{\Omega})$, the function $x\longmapsto F(x,m)$ is semiconcave with linear modulus, uniformly with respect to $m$. 
\end{enumerate}
\noindent
Let $L:U\times\mathbb{R}^n\rightarrow \mathbb{R}$ be a function that satisfies the following assumptions.
\begin{enumerate}
	\item[(L0)] $L\in C^1(U\times \mathbb{R}^n)$ and there exists a constant $M\geq 0$ such that
	\begin{equation}\label{bml}
	|L(x,0)|+|D_xL(x,0)|+|D_vL(x,0)|\leq M, \ \ \ \ \forall \ x\in U.
	\end{equation}
	\item[(L1)] $D_vL$ is differentiable on $U\times\mathbb{R}^n$ and there exists a constant $\mu\geq 1$ such that
	\begin{align}
	& \frac{I}{\mu} \leq D^2_{vv}L(x,v)\leq I\mu,\label{lh1}\\
	& ||D_{vx}^2L(x,v)||\leq \mu(1+|v|),\label{c6}
	\end{align}
	for all $(x,v)\in U\times \mathbb{R}^n$.
	\item[(L2)] For all $x\in U$ and for all $v$, $w\in B_R$, there exists a constant $C(R)\geq 0$ such that
	\begin{equation}
	|D_xL(x,v)-D_xL(x,w)|\leq C(R) |v-w|.
	\end{equation}
	\item[(L3)] For any $R>0$ the map $x\longmapsto L(x,v)$ is semiconcave with linear modulus, uniformly with respect to $v\in B_R$.
\end{enumerate}
\begin{remark}
For any given  $m\in \L(0,T;\mathcal{P}(\overline{\Omega}))$, the function $f(t,x,v):=L(x,v)+F(x,m(t))$ satisfies assumptions (f0)-(f4).
\end{remark}
We denote by $H:U\times \mathbb{R}^n\rightarrow \mathbb{R}$ the Hamiltonian
\begin{equation}\label{def.HH}
H(x,p)=\sup_{v\in\mathbb{R}^n}\Big\{-\langle p,v\rangle-L(x,v)\Big\},\ \  \ \ \ \forall\ (x,p)\in U\times\mathbb{R}^n.
\end{equation}
The assumptions on $L$ imply that $H$ satisfies the following conditions.
\begin{enumerate}
	\item[(H0)] 
	$H\in C^1(U\times \mathbb{R}^n)$ and there exists a constant $M'\geq 0$ such that
	\begin{equation}
	|H(x,0)|+|D_xH(x,0)|+|D_pH(x,0)|\leq M', \ \ \ \ \forall x\in U.
	\end{equation}
\item[(H1)] $D_pH$ is differentiable on $U\times\mathbb{R}^n$ and satisfies
	\begin{align}
	&	\frac{I}{\mu}\leq D_{pp}H(x,p)\leq I\mu, \ \ \ \ \ \ \ \ \ \ \ \ \ \ \forall \ (x,p)\in U\times\mathbb{R}^n,\label{h1}\\
	&||D_{px}^2  H(x,p)||\leq C(\mu,M')(1+|p|), \ \ \ \forall \ (x,p)\in U\times \mathbb{R}^n,
	\end{align}
	where $\mu$ is the constant in (L1) and $C(\mu,M')$ depends only on $\mu$ and $M'$.
	\item[(H2)] For all $x\in U$ and for all $p$, $q\in B_R$, there exists a constant $C(R)\geq 0$ such that
	\begin{equation}
	|D_xH(x,p)-D_xH(x,q)|\leq C(R) |p-q|.
	\end{equation}
	\item[(H3)] For any $R>0$ the map $x\longmapsto H(x,p)$ is semiconvex with linear modulus, uniformly with respect to  $p\in B_R$.
\end{enumerate}
\subsection{Constrained MFG equilibria and mild solutions}
For any $t\in [0,T]$, we denote by $e_t:\Gamma\to \overline \Omega$ the evaluation map defined by 
\begin{equation}\label{et}
e_t(\gamma)= \gamma(t), \ \ \ \ \forall \gamma\in\Gamma.
\end{equation}
For any $\eta\in\mathcal{P}(\Gamma)$, we define 
\begin{equation}\label{me}
m^\eta(t)=e_t\sharp\eta \ \ \ \ \forall t\in [0,T].
\end{equation}
\noindent
For any fixed $m_0\in\mathcal{P}(\overline{\Omega})$, we denote by ${\mathcal P}_{m_0}(\Gamma)$ the set of all Borel probability measures $\eta$ on $\Gamma$ such that $e_0\sharp \eta=m_0$.
For all $\eta \in \mathcal{P}_{m_0}(\Gamma)$, we set
\begin{equation*}
J_\eta [\gamma]=\int_0^T \Big[L(\gamma(t),\dot \gamma(t))+ F(\gamma(t),m^\eta(t))\Big]\ dt + G(\gamma(T),m^\eta(T)), \ \ \ \ \  \forall \gamma\in\Gamma.
\end{equation*}
For all $x \in\overline{\Omega}$ and $\eta\in\mathcal{P}_{m_0}(\Gamma)$, we define
\begin{equation*}
\Gamma^\eta[x]=\Big\{ \gamma\in\Gamma_0[x]:J_\eta[\gamma]=\min_{\Gamma[x]} J_\eta\Big\},
\end{equation*}
where $\Gamma_0[x]=\{\gamma\in\Gamma: \gamma(0)=x\}$.
\begin{definition}
	Let $m_0\in\mathcal{P}(\overline{\Omega})$. We say that $\eta\in\mathcal{P}_{m_0}(\Gamma)$ is a contrained MFG equilibrium for $m_0$ if
	\begin{equation*}
	supp(\eta)\subseteq \bigcup_{x\in\overline{\Omega}} \Gamma^\eta[x].
	\end{equation*}
\end{definition}
\noindent
We denote by $\lo$ the set of $\eta\in \mathcal{P}_{m_0}(\Gamma)$ such that $m^\eta(t)=e_t\sharp \eta$ is Lipschitz continuous. Let $\eta\in\lo$ and fix $x\in\overline{\Omega}$. Then we have that 
\begin{equation}\label{l0}
||\dot{\gamma}||_\infty \leq L_0, \ \ \ \forall \gamma\in \Gamma^\eta[x],
\end{equation}
where $L_0=L_0(\mu,M',M,\kappa,||G||_\infty,||DG||_\infty)$ (see \cite[Proposition 4.1]{ccc}).\\
\noindent
We recall the definition of mild solution of the constrained MFG problem given in \cite{cc}.
\begin{definition}\label{def.mildsol}
	We say that $(u,m)\in  C([0,T]\times \overline{\Omega})\times C([0,T];\mathcal{P}(\overline{\Omega}))$ is a mild solution of the constrained MFG problem in $\overline{\Omega}$ if there exists a constrained MFG equilibrium $\eta\in\mathcal{P}_{m_0}(\Gamma)$ such that 
	\begin{enumerate}
		\item [(i)] $ m(t)= e_t\sharp \eta$ for all $t\in[0,T]$;
		\item[(ii)] $u$ is given by
		\begin{equation}\label{v}
		u(t,x)=  \inf_{\gamma\in \Gamma_t[x]} 
		\left\{\int_t^T \left[L(\gamma(s),\dot \gamma(s))+ F(\gamma(s), m(s))\right]\ ds + G(\gamma(T),m(T))\right\},  
		\end{equation} 
		for $(t,x)\in [0,T]\times \overline{\Omega}$.
	\end{enumerate}
\end{definition}
\begin{remark}
Suppose that (L0),(L1), (D1) and (D2) hold true. Then,
\begin{enumerate}
\item[1.]there exists at least one constrained MFG equilibrium;
\item[2.]there exists at least one mild solution $(u,m)$ of the constrained MFG problem in $\overline{\Omega}$ such that
	\begin{enumerate}
		\item[(i)] $u$ is Lipschitz continuous in $[0,T]\times\overline{\Omega}$;
		\item[(ii)] $m\in\L(0,T;\mathcal{P}(\overline{\Omega}))$ and $\L(m)\leq \ L_0$ where $L_0$ is given in \eqref{l0}.
	\end{enumerate}
\end{enumerate}
For the proof see \cite{ccc}.
\end{remark}
\noindent
A direct consequence of Corollary \ref{cor1} is the following result.
\begin{corollario}\label{c41}
Let $\Omega$ be a bounded open subset of $\mathbb{R}^n$ with $C^2$ boundary. Suppose that (L0)-(L3), (D1)-(D3) hold true. Let $(u,m)$ be a mild solution of the constrained MFG problem in $\overline{\Omega}$. Then, $u$ is locally semiconcave with modulus $\omega(r)=Cr^\frac{1}{2}$ in $[0,T)\times \overline{\Omega}$.
\end{corollario}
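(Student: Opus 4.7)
The plan is to reduce directly to Corollary \ref{cor1} applied to a suitable time-dependent calculus-of-variations problem. Given a mild solution $(u,m)$ of the constrained MFG problem, the value function $u$ is defined by \eqref{v}, which coincides with the constrained variational value function \eqref{vf} for the choice of data $f(s,x,v) := L(x,v) + F(x, m(s))$ and $g(x) := G(x, m(T))$. Hence, once we check that $f$ satisfies (f0)--(f4) and $g$ satisfies (g1), the desired semiconcavity estimate follows immediately from Corollary \ref{cor1}.

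For the data verification I would invoke the remark that precedes Corollary \ref{c41}: whenever $m \in \L(0,T;\mathcal{P}(\overline{\Omega}))$, the Lagrangian $f(t,x,v) := L(x,v) + F(x, m(t))$ satisfies (f0)--(f4). Concretely, (f0) follows from (L0), (D1), (D2) and the continuity of $t \mapsto m(t)$; (f1) reduces to (L1) since $F$ is $v$-independent, and in particular $D^2_{vv}f = D^2_{vv}L$, $D^2_{vx}f = D^2_{vx}L$; (f2) follows from (D1) combined with the Lipschitz continuity of $t \mapsto m(t)$ in the Kantorovich--Rubinstein metric, together with the fact that $D_v f = D_v L$ is time-independent (so its Lipschitz constant in $t$ is zero); (f3) is a restatement of (L2); and the semiconcavity (f4) follows from (L3) together with (D3). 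The terminal cost $g(x) := G(x, m(T))$ belongs to $C^1_b(U)$ by (D2), so (g1) holds.

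With these verifications in place, Corollary \ref{cor1} directly yields local fractional semiconcavity of $u$ with modulus $\omega(r) = Cr^{1/2}$ on $(0,T) \times \overline{\Omega}$, which is the statement of Corollary \ref{c41}. The only point requiring attention is the Lipschitz regularity of $t \mapsto m(t)$, which is precisely what is guaranteed for the class of mild solutions constructed in \cite{ccc}, as recorded in the preceding remark. I do not expect any serious obstacle beyond this bookkeeping: once the reduction to the fixed variational problem with $m$-dependent data is made, the whole work is shifted to Section 3.
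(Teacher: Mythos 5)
Your proposal is correct and follows exactly the paper's intended argument: the paper states Corollary \ref{c41} as ``a direct consequence of Corollary \ref{cor1},'' relying on the preceding remark that $f(t,x,v):=L(x,v)+F(x,m(t))$ satisfies (f0)--(f4) when $m\in\L(0,T;\mathcal{P}(\overline{\Omega}))$, and your verification of the hypotheses (f0)--(f4), (g1) from (L0)--(L3), (D1)--(D3) and the Lipschitz regularity of $t\mapsto m(t)$ is precisely the needed bookkeeping.
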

\subsection{The Hamilton-Jacobi-Bellman equation}
Let $\Omega$ be a bounded open subset of $\R^n$ with $C^2$ boundary. Assume that $H$, $F$ and $G$ satisfy the assumptions in Section \ref{ipotesi}. Let $m\in\L(0,T;\mathcal{P}(\overline{\Omega}))$. Consider the following equation
\begin{equation}\label{1hj}
-\partial_t u+ H(x,Du)=F(x,m(t)) \ \ \ \ \text{in} \ (0,T)\times \overline{\Omega}.
\end{equation}
We recall the definition of constrained viscosity solution.
\begin{definition}\label{d1}
	Let $u\in C((0,T)\times\overline{\Omega})$. We say that:
	\begin{enumerate}
		\item[(i)] $u$ is a viscosity supersolution of \eqref{1hj} in $(0,T)\times\overline{\Omega}$ if 
		\begin{equation*}
		-\partial_t\phi(t,x)+H(x,D\phi(t,x))\geq F(x,m(t)),
		\end{equation*} 
		for any $\phi\in C^1(\R^{n+1})$ such that $u-\phi$ has a local minimum, relative to $(0,T)\times\overline{\Omega}$, at $(t,x)\in (0,T)\times\overline{\Omega}$;
		\item[(ii)] $u$ is a viscosity subsolution of \eqref{1hj} in $(0,T)\times\Omega$ if 
		\begin{equation*}
		-\partial_t\phi(t,x)+H(x,D\phi(t,x))\leq F(x,m(t)),
		\end{equation*} 
		for any $\phi\in C^1(\R^{n+1})$ such that $u-\phi$ has a local maximum, relative to $(0,T)\times\Omega$, at $(t,x)\in (0,T)\times\Omega$;
		\item[(iii)] $u$ is constrained viscosity solution of \eqref{1hj} in $(0,T)\times\overline{\Omega}$ if it is a subsolution in $(0,T)\times\Omega$ and a supersolution in $(0,T)\times\overline{\Omega}$.
	\end{enumerate}
\end{definition}
\begin{remark}
	Owing to Proposition \ref{17}, Definition \ref{d1} can be expressed in terms of subdifferential and superdifferential, i.e.,
	\begin{align*}
	&-p_1+H(x,p_2)\leq F(x,m(t)) \ \ \ \ \forall\ (t,x)\in (0,T)\times \Omega , \ \forall \ (p_1,p_2)\in D^+u(t,x),\\
	&-p_1+H(x,p_2)\geq F(x,m(t)) \ \ \ \ \forall\ (t,x)\in (0,T)\times \overline{\Omega} , \ \forall \ (p_1,p_2)\in D^-u(t,x).
	\end{align*}
\end{remark}
\noindent
A direct consequence of the definition of mild solution is the following result.
\begin{proposition}
	Let $H$ and $F$ satisfy hypotheses $(H0)-(H3)$ and $(D1)-(D3)$, respectively.
	Let $(u,m)$ be a mild solution of the constrained MFG problem in $\overline{\Omega}$. Then, $u$ is a constrained viscosity solution of \eqref{1hj} in $(0,T)\times \overline{\Omega}$.
\end{proposition}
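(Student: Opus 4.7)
My plan is to prove this via the standard dynamic programming machinery for constrained optimal control, treating the flow $m$ as a fixed time-dependent datum. Once $m$ is fixed, $u$ given by \eqref{v} is exactly the value function of the constrained minimization problem \eqref{M} with running cost $f(s,x,v)=L(x,v)+F(x,m(s))$ and terminal cost $g(x)=G(x,m(T))$. Hypotheses (L0)--(L3) and (D1)--(D3) together with $m\in \mathrm{Lip}(0,T;\mathcal{P}(\overline{\Omega}))$ imply that $f$ and $g$ satisfy (f0)--(f2) and (g1), so all preliminary results from Section~2 apply to $u$.

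The first step is to establish the dynamic programming principle: for any $(t,x)\in[0,T]\times\overline{\Omega}$ and any $h\in(0,T-t]$,
\begin{equation*}
u(t,x)=\inf_{\gamma\in\Gamma_t[x]}\Bigl\{\int_t^{t+h}\bigl[L(\gamma(s),\dot\gamma(s))+F(\gamma(s),m(s))\bigr]\,ds+u(t+h,\gamma(t+h))\Bigr\}.
\end{equation*}
This is standard and follows by splitting the integral defining $u$ at $t+h$ and then concatenating near-optimal trajectories.

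Next I verify the subsolution inequality inside $(0,T)\times\Omega$. Let $\phi\in C^1(\mathbb{R}^{n+1})$ be such that $u-\phi$ attains a local maximum at $(t,x)\in(0,T)\times\Omega$. Fix any $v\in\mathbb{R}^n$ and let $\gamma(s)=x+(s-t)v$; since $x\in\Omega$, $\gamma$ stays in $\overline{\Omega}$ for $h>0$ small. By the DPP,
\begin{equation*}
\phi(t,x)\leq \int_t^{t+h}\bigl[L(\gamma(s),v)+F(\gamma(s),m(s))\bigr]\,ds+\phi(t+h,\gamma(t+h)).
\end{equation*}
Dividing by $h$ and letting $h\downarrow 0$ gives
\begin{equation*}
-\partial_t\phi(t,x)-\langle D\phi(t,x),v\rangle-L(x,v)\leq F(x,m(t)),
\end{equation*}
and taking the supremum over $v\in\mathbb{R}^n$ yields $-\partial_t\phi(t,x)+H(x,D\phi(t,x))\leq F(x,m(t))$.

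For the supersolution inequality on $(0,T)\times\overline{\Omega}$, let $\phi\in C^1(\mathbb{R}^{n+1})$ be such that $u-\phi$ attains a local minimum at $(t,x)\in(0,T)\times\overline{\Omega}$. Since $\eta$ is an MFG equilibrium and $m(s)=e_s\sharp\eta$, there exists $\gamma\in\Gamma_t[x]$ that is optimal for $u(t,x)$ (for interior $x$ this is standard; for boundary $x$ one uses that the cost is lower semicontinuous and the admissible class is closed, or approximates by near-optimal trajectories). By the DPP equality along optimal curves and the minimum property of $u-\phi$ at $(t,x)$,
\begin{equation*}
0\leq \int_t^{t+h}\Bigl[L(\gamma(s),\dot\gamma(s))+F(\gamma(s),m(s))+\partial_t\phi(s,\gamma(s))+\langle D\phi(s,\gamma(s)),\dot\gamma(s)\rangle\Bigr]ds.
\end{equation*}
By Theorem~\ref{51}(iii), $\dot\gamma$ is uniformly bounded, so by continuity of the integrand at $s=t$ (using that $m$ is narrowly continuous in $s$ and $F(\cdot,\cdot)$ is continuous), dividing by $h$ and sending $h\downarrow 0$ gives
\begin{equation*}
0\leq L(x,\dot\gamma(t))+\langle D\phi(t,x),\dot\gamma(t)\rangle+F(x,m(t))+\partial_t\phi(t,x).
\end{equation*}
Since $-H(x,D\phi(t,x))\leq L(x,v)+\langle D\phi(t,x),v\rangle$ for every $v\in\mathbb{R}^n$, in particular for $v=\dot\gamma(t)$, we conclude $-\partial_t\phi(t,x)+H(x,D\phi(t,x))\geq F(x,m(t))$, which is the supersolution inequality up to the boundary.

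The only subtle point is the existence/near-optimal selection of a trajectory $\gamma\in\Gamma_t[x]$ at boundary points $x\in\partial\Omega$ for the supersolution step, but compactness of $\Gamma_t[x]$ in $C([t,T];\overline{\Omega})$ and lower semicontinuity of the cost (standard since $L$ is coercive and convex in $v$) handle this; alternatively, one can work with $\varepsilon$-optimal trajectories and absorb the error in the limit $h\downarrow 0$. No new ingredients beyond Section~2 are needed.
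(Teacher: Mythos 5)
Your approach is the standard Soner-type dynamic programming argument, which is the natural route here: the paper itself gives no proof of this proposition, stating it as a ``direct consequence of the definition of mild solution'' with a pointer to the constrained-HJB literature (\cite{cdl,So1,So2}), so comparing approaches is moot — you are supplying exactly the argument the paper implicitly invokes, and the subsolution step is correct as written.

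However, there is a sign error in the supersolution step. Since $u-\phi$ attains a local \emph{minimum} at $(t,x)$, combining $u(t+h,\gamma(t+h))-u(t,x)\geq \phi(t+h,\gamma(t+h))-\phi(t,x)$ with the dynamic programming \emph{equality} along the optimal trajectory $\gamma$ gives
\[
0\;\geq\;\int_t^{t+h}\Bigl[L(\gamma(s),\dot\gamma(s))+F(\gamma(s),m(s))+\partial_t\phi(s,\gamma(s))+\langle D\phi(s,\gamma(s)),\dot\gamma(s)\rangle\Bigr]ds,
\]
not $0\leq\cdots$ as you wrote (and likewise the limit should read $0\geq L(x,\dot\gamma(t))+\langle D\phi(t,x),\dot\gamma(t)\rangle+F(x,m(t))+\partial_t\phi(t,x)$). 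This is not cosmetic: with $0\leq\cdots$ the final step does not close, because $-H(x,D\phi(t,x))\leq L(x,v)+\langle D\phi(t,x),v\rangle$ and $-F(x,m(t))-\partial_t\phi(t,x)\leq L+\langle D\phi,\dot\gamma\rangle$ are two lower bounds on the same quantity and give no comparison between $-F-\partial_t\phi$ and $-H$. With the corrected $\geq$ the chain $-\partial_t\phi-F\geq L(x,\dot\gamma(t))+\langle D\phi,\dot\gamma(t)\rangle\geq -H(x,D\phi)$ yields the supersolution inequality as claimed, and the rest of the argument (DPP, interior subsolution, existence/$\varepsilon$-optimal trajectories at boundary points, continuity in $s$ of the integrand using $m\in\L(0,T;\mathcal{P}(\overline{\Omega}))$ and (D1)) is sound. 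One further small remark: the running cost $f(t,x,v)=L(x,v)+F(x,m(t))$ actually satisfies the full set (f0)--(f4) (cf.\ the paper's remark), not merely (f0)--(f2); only (f0)--(f2) and (g1) are needed here, so this does not affect your proof, but it is worth citing the correct set.
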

\begin{remark}
	Given $m\in \L(0,T;\mathcal{P}(\overline{\Omega}))$, it is known that $u$ is the unique constrained viscosity solution of \eqref{1hj} in $\overline{\Omega}$ (see \cite{cdl, So1, So2}).
\end{remark}
\noindent
From now on, we set
\begin{align}
Q_m\hspace*{-1mm}=\hspace*{-1mm}\{(t,x)\hspace*{-1mm}\in\hspace*{-0.5mm} (0,T)\times\Omega: x \hspace*{-0.5mm}\in \hspace*{-0.5mm}supp (m(t))\},\; \partial Q_m\hspace*{-1mm}=\hspace*{-1mm}\{(t,x)\hspace*{-1mm}\in (0,T)\times\partial\Omega: x \hspace*{-0.5mm}\in \hspace*{-0.5mm}supp (m(t))\}. \label{defQm}
\end{align}
We note that $Q_m\cap \partial Q_m=\emptyset$ and that $Q_m\cup\partial Q_m= supp(m)\cap ((0,T)\times \overline{\Omega})$.
\begin{theorem}\label{t3}
	Let $H$ and $F$ satisfy hypotheses $(H0)-(H3)$ and $(D1)-(D3)$, respectively. Let $(u,m)$ be a mild solution of the constrained MFG problem in $\overline{\Omega}$ and let $(t,x)\in Q_m$. Then,
	\begin{equation}\label{hj1}
	-p_1+H(x,p_2)=F(x,m(t)), \ \ \ \ \forall \ (p_1,p_2)\in D^+u(t,x).
	\end{equation}
\end{theorem}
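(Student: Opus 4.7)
The easy direction $-p_1+H(x,p_2)\le F(x,m(t))$ for every $(p_1,p_2)\in D^+u(t,x)$ follows immediately from the previous Proposition (asserting that $u$ is a constrained viscosity solution), the inclusion $Q_m\subset (0,T)\times\Omega$, and the reformulation of the subsolution property via Proposition \ref{17}. The work lies in the reverse inequality.

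As a first step I would produce a distinguished $(\bar p_1,\bar p_2)\in D^+u(t,x)$ that already saturates the equation. Since $x\in\mathrm{supp}(m(t))=\mathrm{supp}(e_t\sharp\eta)$, pick a sequence $\gamma_k\in\mathrm{supp}(\eta)$ with $\gamma_k(t)\to x$. Each $\gamma_k$ is optimal on $[0,T]$ for $J_\eta$, so by the dynamic programming principle its restriction to $[t,T]$ minimizes the variational problem defining $u(t,\gamma_k(t))$ with running cost $L(y,v)+F(y,m(s))$. Let $p_k$ be an associated dual arc; the uniform bound on $\dot\gamma_k$ from Theorem \ref{51} together with $p_k(t)=-D_vL(\gamma_k(t),\dot\gamma_k(t))$ makes $\{p_k(t)\}$ bounded, and I pass to a subsequence with $p_k(t)\to\bar p_2$. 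Corollary \ref{coro.ukeblsrnd} (whose Hamiltonian here equals $H(x,p)-F(x,m(s))$) gives $(H(\gamma_k(t),p_k(t))-F(\gamma_k(t),m(t)),\,p_k(t))\in D^+u(t,\gamma_k(t))$. Combining Corollary \ref{c41} with the upper semi-continuity of the superdifferential for semiconcave functions (the space-time analogue of Proposition \ref{pp2}), the limit $(\bar p_1,\bar p_2):=(H(x,\bar p_2)-F(x,m(t)),\,\bar p_2)$ lies in $D^+u(t,x)$ and obviously satisfies $-\bar p_1+H(x,\bar p_2)=F(x,m(t))$.

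The heart of the argument is to upgrade this equality to every element of $D^+u(t,x)$. Set $v^*:=-D_pH(x,\bar p_2)$; by Remark \ref{rem.dotgamma} and continuity this is $\lim_k\dot\gamma_k(t)$. Using the uniform $C^{1,1}$ regularity of the $\gamma_k$, the optimality identity
$$u(t\pm\sigma,\gamma_k(t\pm\sigma))-u(t,\gamma_k(t))=\mp\int_t^{t\pm\sigma}\bigl[L(\gamma_k,\dot\gamma_k)+F(\gamma_k,m)\bigr]\,ds,$$
and the Lipschitz continuity of $u$, a direct computation (passing $k\to\infty$ with $\sigma$ fixed, then $\sigma\to 0$) produces
$$u(t+\sigma,x+\sigma v^*)-u(t,x)=-\sigma\bigl[L(x,v^*)+F(x,m(t))\bigr]+O(\sigma^2)$$
for $\sigma\to 0$ from both sides. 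Hence the two one-sided directional derivatives of $u$ at $(t,x)$ along $\pm(1,v^*)$ exist and are opposite in sign. By Corollary \ref{c41} and Lemma \ref{ax5} (applied in space-time), each directional derivative coincides with $\min_{p\in D^+u(t,x)}\langle p,\theta\rangle$, so this simultaneous two-sided attainment forces the affine map $(p_1,p_2)\mapsto p_1+\langle p_2,v^*\rangle$ to be \emph{constant} on $D^+u(t,x)$, with common value $-L(x,v^*)-F(x,m(t))$.

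To conclude, fix any $(p_1,p_2)\in D^+u(t,x)$ and substitute $p_1=-L(x,v^*)-F-\langle p_2,v^*\rangle$ into the subsolution inequality: this yields $H(x,p_2)\le -\langle p_2,v^*\rangle-L(x,v^*)$, while by the definition of $H$ one has $H(x,p_2)=\sup_v\{-\langle p_2,v\rangle-L(x,v)\}\ge -\langle p_2,v^*\rangle-L(x,v^*)$; hence equality holds and $v^*$ realizes the supremum defining $H(x,p_2)$. Strict convexity of $L$ (the Legendre dual of assumption (H1)) makes this realizer unique, so $v^*=-D_pH(x,p_2)$, and since $D_pH(x,\cdot)$ is injective we get $p_2=\bar p_2$, and automatically $p_1=\bar p_1$. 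Therefore $D^+u(t,x)=\{(\bar p_1,\bar p_2)\}$ and the claimed equality holds throughout $D^+u(t,x)$. The main technical obstacle is the two-scale limit in the directional derivative computation: uniform $C^{1,1}$ bounds on $\{\gamma_k\}$ (from Theorem \ref{51}) and an Arzelà-Ascoli extraction of a limit trajectory $\gamma$ with $\gamma(t)=x$, $\dot\gamma(t)=v^*$ are needed to control $|\gamma_k(t+\sigma)-(x+\sigma v^*)|$ quadratically in $\sigma$ so that the $O(\sigma^2)$ error survives uniformly in $k$.
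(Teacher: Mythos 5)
Your proposal is correct, but it takes a genuinely different and considerably longer route than the paper. The paper's proof is a direct, few-line computation: given any $(p_1,p_2)\in D^+u(t,x)$, it picks an optimal trajectory $\gamma$ through $(t,x)$ (guaranteed since $(t,x)\in Q_m$), tests the superdifferential inequality against the single point $(t-r,\gamma(t-r))$, substitutes the dynamic programming identity $u(t-r,\gamma(t-r))-u(t,x)=\int_{t-r}^t[L+F]\,ds$, divides by $r$, sends $r\to 0^+$ using the regularity of $L$, $F$, $\gamma$, $m$, and concludes $-p_1-\langle p_2,\dot\gamma(t)\rangle-L(x,\dot\gamma(t))\geq F(x,m(t))$, after which the definition of $H$ as a supremum finishes the job. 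No auxiliary object needs to be built. Your argument, by contrast, first manufactures a distinguished saturating element $(\bar p_1,\bar p_2)$ via dual arcs, Corollary \ref{coro.ukeblsrnd}, and semicontinuity of $D^+u$; then establishes two-sided directional derivatives at $(t,x)$ along $\pm(1,v^*)$ using the representation $\partial^+_\theta u=\min_{p\in D^+u}\langle p,\theta\rangle$ from Lemma \ref{ax5}; deduces from the two-sided matching that the affine functional $(p_1,p_2)\mapsto p_1+\langle p_2,v^*\rangle$ is constant on $D^+u(t,x)$; and only then combines this with the subsolution inequality and the definition of $H$. What your approach buys is an automatic proof that $D^+u(t,x)$ is a singleton, i.e., differentiability of $u$ on $Q_m$, which the paper proves separately in Proposition \ref{t6} via the strict convexity of $H$. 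Two small observations: (i) the preliminary construction of $(\bar p_1,\bar p_2)$ via Corollary \ref{coro.ukeblsrnd} is not logically needed for your endgame --- the object that does all the work is the $C^1$-limit trajectory $\gamma$ with $\gamma(t)=x$, $\dot\gamma(t)=v^*$, and the directional-derivative identity; (ii) you could shortcut the whole middle section by simply testing the superdifferential inequality against $(t-r,\gamma(t-r))$ as the paper does, which would collapse your argument to theirs.
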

\begin{proof}
	Let $(u,m)$ be a mild solution of the constrained MFG problem in $\overline{\Omega}$. 
	Since $u$ is a constrained viscosity solution of \eqref{1hj} in $\overline{\Omega}$, we know that
	\begin{align*}
	-p_1 + H(x,p_2)\leq F(x,m(t)) \ \ \ \forall \ (t,x) \in (0,T)\times \Omega,\ \ \forall\ (p_1,p_2)\in D^+u(t,x).
	\end{align*}
	So, it suffices to prove that the converse inequality also holds. Let us take $(t,x)\in Q_m$ and $(p_1,p_2)\in D^+u(t,x)$. Since $(t,x)\in Q_m$, then there exists  an optimal trajectory $\gamma:[0,T]\rightarrow \overline{\Omega}$ such that $\gamma(t)=x$. 
	Let $r\in \R$ be small enough and such that $0\leq t-r \leq t$. Since $(p_1,p_2)\in D^+u(t,x)$ one has that
	\begin{equation*}
	u(t-r,\gamma(t-r))-u(t,x)\leq -p_1r -\langle p_2, x-\gamma(t-r)\rangle +o(r).
	\end{equation*}
	Since
	\begin{equation*}
	x-\gamma(t-r)=\int_{t-r}^t\dot{\gamma}(s)\,ds,
	\end{equation*}
	we get
	\begin{equation}\label{ps}
	\langle p_2, x-\gamma(t-r)\rangle= \int_{t-r}^t \langle p_2,\dot{\gamma}(s)\rangle \,ds.
	\end{equation}
	By the dynamic programming principle and \eqref{ps} one has that 
	\begin{align*}
	\int_{t-r}^t \Big[L(\g(s),\dg(s))+F(\g(s),m(s))\Big]\,ds&= u(t-r,\gamma(t-r))-u(t,x)\\
	&\leq-\int_{t-r}^t \langle p_2,\dg(s)\rangle\,ds -p_1 r  +o(r). 
	\end{align*}
	By our assumptions on $L$ and $F$ and by Theorem \ref{51}, one has that
	\begin{align}\label{lfg}
	&L(\g(s),\dg(s))=L(x,\dg(t)) +o(1),\nonumber\\
	&F(\g(s),m(s))=F(x,m(t)) +o(1),\\
	&\langle p_2,\dg(s)\rangle = \langle p_2,\dg(t)\rangle +o(1),\nonumber
	\end{align}
	for all $s\in [t-r,t]$.
	Hence,
	\begin{equation*}
	-p_1 -\langle p_2,\dot{\g}(t)\rangle-L(x,\dot{\gamma}(t))\geq F(x,m(t)),
	\end{equation*}
	and so by the definition of $H$ we conclude that
	\begin{equation*}
	-p_1+H(x,p_2)=-p_1 +\sup_{v\in\R^n}\{-\langle p,v\rangle-L(x,v)\}\geq-p_1 -\langle p_2,\dot{\g}(t)\rangle-L(x,\dot{\gamma}(t)) \geq F(x,m(t)).
	\end{equation*}
	This completes the proof.
\end{proof}
\begin{proposition}\label{t6}
	Let $H$ and $F$ satisfy the hypotheses $(H0)-(H3)$ and $(D1)-(D3)$, respectively. Let $(u,m)$ be a mild solution of the constrained MFG problem in $\overline{\Omega}$ and let $(t,x)\in Q_m$. Then $u$ is differentiable at $(t,x)$.
\end{proposition}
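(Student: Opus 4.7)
Since $u$ is locally semiconcave on $(0,T)\times\overline{\Omega}$ with modulus $\omega(r)=Cr^{1/2}$ by Corollary \ref{c41}, and since $(t,x)\in Q_m$ satisfies $x\in\Omega$, differentiability at $(t,x)$ is equivalent to $D^+u(t,x)$ being a singleton. By Theorem \ref{t3}, every $(q_1,q_2)\in D^+u(t,x)$ satisfies $q_1=H(x,q_2)-F(x,m(t))$, so it is enough to show that the spatial component $q_2$ is uniquely determined. Because $(t,x)\in Q_m$, there exist an MFG equilibrium $\eta$ with $m(s)=e_s\sharp\eta$ and $\gamma\in\mathrm{supp}(\eta)$ with $\gamma(t)=x$ that is optimal for the value function \eqref{v} with fixed flow $m$. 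Let $p\in\mathrm{Lip}(0,T;\R^n)$ be a dual arc associated with $\gamma$. By Remark \ref{rem.dotgamma}, $\dot\gamma(t)=-D_pH(x,p(t))$, and Corollary \ref{coro.ukeblsrnd}, applied to the time-dependent Hamiltonian $(s,y,q)\mapsto H(y,q)-F(y,m(s))$ of the MFG control problem, yields $(H(x,p(t))-F(x,m(t)),\,p(t))\in D^+u(t,x)$.

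Fix an arbitrary $(q_1,q_2)\in D^+u(t,x)$. Since $x\in\Omega$ and $\gamma\in C^{1,1}$, for $\sigma>0$ small enough the segment joining $(t,x)$ to $(t-\sigma,\gamma(t-\sigma))$ lies in $(0,T)\times\Omega$, so Proposition \ref{pp1} combined with the fractional modulus gives
\[
 u(t-\sigma,\gamma(t-\sigma))-u(t,x)+q_1\sigma-\langle q_2,\gamma(t-\sigma)-x\rangle\leq C\sigma^{3/2},
\]
where we used $|\gamma(t-\sigma)-x|\leq L^\star\sigma$ from Theorem \ref{51}(iii). The dynamic programming principle applied to the global minimiser $\gamma$, together with $\gamma(t-\sigma)-x=-\int_{t-\sigma}^t\dot\gamma(s)\,ds$, rewrites this as
\[
 \frac{1}{\sigma}\int_{t-\sigma}^t\!\!\bigl[L(\gamma(s),\dot\gamma(s))+F(\gamma(s),m(s))+\langle q_2,\dot\gamma(s)\rangle\bigr]\,ds+q_1\leq C\sigma^{1/2}.
\]
Letting $\sigma\to 0^+$ and using the continuity of $L$, $F$, $\gamma$ and $\dot\gamma$ at $t$, we obtain
\[
 L(x,\dot\gamma(t))+F(x,m(t))+\langle q_2,\dot\gamma(t)\rangle+q_1\leq 0.
\]

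Substituting $q_1=H(x,q_2)-F(x,m(t))$ from Theorem \ref{t3} rearranges the previous inequality to $H(x,q_2)\leq-\langle q_2,\dot\gamma(t)\rangle-L(x,\dot\gamma(t))$. The reverse inequality is automatic from the definition \eqref{def.HH} of $H$, so $\dot\gamma(t)$ is a maximiser in $H(x,q_2)$; the uniform convexity of $L$ in $v$ (hypothesis (L1)) forces this maximiser to be unique and equal to $-D_pH(x,q_2)$, hence $\dot\gamma(t)=-D_pH(x,q_2)$. Combined with $\dot\gamma(t)=-D_pH(x,p(t))$ and the injectivity of $q\mapsto D_pH(x,q)$ coming from $D_{pp}H\geq I/\mu$ in (H1), this forces $q_2=p(t)$. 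Thus $D^+u(t,x)=\{(H(x,p(t))-F(x,m(t)),\,p(t))\}$ is a singleton, and $u$ is differentiable at $(t,x)$. The only delicate point is that the semiconcavity estimate of Proposition \ref{pp1} requires the chosen segment to remain in $\overline{\Omega}$; this is automatic because $x$ is an interior point, but would genuinely fail at boundary points—exactly the regime where the refined tangential/normal decomposition described in the introduction becomes unavoidable.
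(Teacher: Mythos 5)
Your proof is correct, but it takes a genuinely different—and noticeably longer—route than the paper. The paper's argument is essentially three lines: by Theorem \ref{t3}, every $(p_1,p_2)\in D^+u(t,x)$ satisfies $-p_1+H(x,p_2)=F(x,m(t))$; since $D^+u(t,x)$ is convex and $H(x,\cdot)$ is strictly convex, an affine equality holding on a convex set of arguments of a strictly convex function forces that set to be a singleton (take a midpoint of two elements and compare); then a singleton superdifferential of a semiconcave function implies differentiability by \cite[Proposition~3.3.4]{cs}. No optimal trajectory, no dual arc, no dynamic programming. You instead identify the unique element of $D^+u(t,x)$ explicitly as $\bigl(H(x,p(t))-F(x,m(t)),\,p(t)\bigr)$, where $p$ is the dual arc of an optimal trajectory through $(t,x)$: you redo the DPP estimate already carried out inside the proof of Theorem \ref{t3}, combine it with the equality $q_1=H(x,q_2)-F(x,m(t))$ to conclude that $\dot\gamma(t)$ is the (unique, by (L1)) maximizer in the definition of $H(x,q_2)$, and then invert $D_pH(x,\cdot)$ via (H1). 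This is valid, and has the mild virtue of being constructive—it tells you what $Du(t,x)$ is, not only that it exists—but it repeats work that Theorem \ref{t3} has already done, and it uses more machinery (Remark \ref{rem.dotgamma}, Corollary \ref{coro.ukeblsrnd}, injectivity of $D_pH$) than is strictly needed. You could shorten your argument considerably by replacing the whole DPP step with the paper's convexity trick: once $-p_1+H(x,p_2)=F(x,m(t))$ is known on all of $D^+u(t,x)$, convexity of the superdifferential plus strict convexity of $H$ already closes the proof.
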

\begin{proof}
	By Theorem \ref{t3} one has that
	\begin{equation*}
	-p_1 + H(x,p_2)=F(x,m(t)) \ \ \forall \ (t,x)\in Q_m, \ \forall \ (p_1,p_2)\in D^+u(t,x).
	\end{equation*}
	Since $H(x, \cdot)$ is strictly convex and $D^+u(t,x)$ is a convex set, the above equality implies that $D^+u(t,x)$ is a singleton. Then, owing to Corollary \ref{c41} and  \cite[Proposition 3.3.4]{cs}, $u$ is differentiable at $(t,x)$.
\end{proof}
\noindent
Let $x\in\partial \Omega$. We denote by $\ho:\partial\Omega\times\mathbb{R}^n\rightarrow \R$ the tangential Hamiltonian
\begin{equation}\label{defHtau}
\ho(x,p)=\sup_{\tiny\begin{array}{c}
	v\in\R^n\\
	\langle v,\nu(x)\rangle=0
	\end{array}}
\{-\langle p, v\rangle -L(x,v)\},
\end{equation}
where $\nu(x)$ is the outward unit normal  to $\partial\Omega$ in $x$.
\begin{theorem}\label{t4}
	Let $H$ and $F$ satisfy hypotheses $(H0)$-$(H3)$ and $(D1)$-$(D3)$, respectively. Let $(u,m)$ be a mild solution of the constrained MFG problem in $\overline{\Omega}$ and let $(t,x)\in \partial Q_m$. Then,
	\begin{equation}
	-p_1+\ho(x,p_2)=F(x,m(t)), \ \ \ \forall\ (p_1,p_2)\in D^+u(t,x).
	\end{equation}
\end{theorem}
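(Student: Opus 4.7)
The proof follows a two-sided strategy in the spirit of Theorem \ref{t3}, but with the tangency constraint forced by the boundary playing a central role. The plan is to prove the equality by establishing separately $-p_1+\ho(x,p_2)\ge F(x,m(t))$ (using an optimal trajectory through $(t,x)$) and the reverse inequality (using a carefully chosen admissible test arc tangent to $\partial\Omega$).

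\textbf{Step 1 (the $\ge$ inequality).} Since $(t,x)\in\partial Q_m$, there exists $\g$ in $\mathrm{supp}(\eta)$ with $\g(t)=x$, hence $\g\in\Gamma^\eta[\g(0)]$ and $\g\in C^{1,1}([0,T];\overline\Omega)$ by Theorem \ref{51}. Because $\g(s)\in\overline\Omega$ for all $s$ in a two-sided neighbourhood of $t$ while $\d(x)=0$, the $C^1$ function $s\mapsto \d(\g(s))$ attains a maximum at $s=t$, so $\langle \nu(x),\dg(t)\rangle=0$, i.e. $\dg(t)$ is tangent to $\partial\Omega$. Now, exactly as in the proof of Theorem \ref{t3}, the dynamic programming principle along $\g$ for $r>0$ small gives
\begin{equation*}
u(t-r,\g(t-r))-u(t,x)=\int_{t-r}^t\bigl[L(\g(s),\dg(s))+F(\g(s),m(s))\bigr]\,ds,
\end{equation*}
while $(p_1,p_2)\in D^+u(t,x)$ and $\langle p_2,\g(t-r)-x\rangle=-\int_{t-r}^t\langle p_2,\dg(s)\rangle\,ds$ yield the reverse $-p_1 r-\int_{t-r}^t\langle p_2,\dg(s)\rangle\,ds+o(r)$. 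Dividing by $r$ and letting $r\to 0^+$ leads to
\begin{equation*}
-p_1-\langle p_2,\dg(t)\rangle-L(x,\dg(t))\ge F(x,m(t)),
\end{equation*}
and since $\dg(t)$ is tangent to $\partial\Omega$, the left-hand side is bounded above by $-p_1+\ho(x,p_2)$.

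\textbf{Step 2 (the $\le$ inequality).} Fix any $v\in\R^n$ with $\langle v,\nu(x)\rangle=0$. Choose $\alpha>0$ large enough (depending on $|v|$ and $\|D^2\d\|_\infty$ near $x$) and define the test arc
\begin{equation*}
\zeta(s)=x+(s-t)v-\alpha(s-t)^2\nu(x),\qquad s\in[t,t+r].
\end{equation*}
A second-order Taylor expansion of $\d$ at $x$, together with $\d(x)=0$, $D\d(x)=\nu(x)$ and $\langle v,\nu(x)\rangle=0$, shows $\d(\zeta(s))\le(-\alpha+C(1+|v|^2))(s-t)^2$, hence $\zeta(s)\in\overline\Omega$ for $r$ small. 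Using $\zeta$ as a competitor in the dynamic programming inequality for $u(t,x)$ and combining with the estimate furnished by $(p_1,p_2)\in D^+u(t,x)$ gives
\begin{equation*}
-\int_t^{t+r}\bigl[L(\zeta(s),\dot\zeta(s))+F(\zeta(s),m(s))\bigr]\,ds\le p_1 r+\langle p_2,\zeta(t+r)-x\rangle+o(r).
\end{equation*}
The quadratic correction contributes only an $O(r^2)$ term both to $\zeta(t+r)-x$ and to the integrals, so dividing by $r$ and passing to the limit yields $-L(x,v)-F(x,m(t))\le p_1+\langle p_2,v\rangle$. Taking the supremum over all tangent $v$ gives $-p_1+\ho(x,p_2)\le F(x,m(t))$, closing the equality.

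\textbf{Main obstacle.} The delicate point is Step 2: one must exhibit an admissible curve starting at the boundary point $x$ with prescribed tangent velocity $v$, and ensure that the second-order inward push needed to keep it inside $\overline\Omega$ does not pollute the first-order expansion. The $C^2$ regularity of $\partial\Omega$ is precisely what makes the quadratic correction $-\alpha(s-t)^2\nu(x)$ sufficient, and the fact that its contribution to both sides of the dynamic programming inequality is $O(r^2)$ is what allows us to recover the tangential Hamiltonian in the limit.
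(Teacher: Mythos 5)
Your argument is correct and follows the same two-sided strategy as the paper: use an optimal trajectory through $(t,x)$ for the $\ge$ inequality, and an admissible test arc with prescribed tangential velocity for the $\le$ inequality. The only material difference is in the construction of the test arc for the $\le$ direction. You build the explicit parabola $\zeta(s)=x+(s-t)v-\alpha(s-t)^2\nu(x)$ and verify admissibility by a second-order Taylor expansion of $\d$, whereas the paper (Lemma \ref{l1}) takes the straight line $s\mapsto x+(s-t)v$ and projects it onto $\overline\Omega$ via $\widehat\gamma(s)=\gamma(s)-d_\Omega(\gamma(s))D\d(\gamma(s))$, then verifies $\dot{\widehat\gamma}(t)=v$ using the differentiability of the composed distance (a result from \cite{cc}). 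Your construction is more elementary and self-contained --- it avoids invoking the projection lemma and its associated chain-rule formula for $d_\Omega\circ\gamma$ --- at the small cost of having to choose the inward-push parameter $\alpha$ as a function of $|v|$ and $\|D^2\d\|$; the paper's projection approach is parameter-free but leans on machinery already developed elsewhere. In both cases the perturbation contributes only $O(r^2)$ to the dynamic-programming expansion, so the tangential Hamiltonian $\ho$ emerges in the limit. The observation in your Step~1 that $s\mapsto\d(\gamma(s))$ has an interior maximum at $s=t$ (forcing $\langle\dg(t),\nu(x)\rangle=0$) is also precisely what the paper asserts, just spelled out more explicitly.
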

\noindent
The technical lemma is needed for the proof of Theorem~\ref{t4}.
\begin{lemma}\label{l1}
	Let $(t,x)\in (0,T)\times \partial \Omega$ and let $\nu(x)$ be the outward unit normal  to $\partial \Omega$ in $x$. Let $v\in \R^n$ be such that $\langle v,\nu(x)\rangle=0$. Then, there exists $\widehat{\gamma}\in \Gamma_t[x]$ such that $\dot{\widehat\gamma}(t)=v$.
\end{lemma}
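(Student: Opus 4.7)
The plan is to construct $\widehat\gamma$ explicitly by projecting onto $\overline\Omega$ the straight line through $x$ with direction $v$. Because $v$ is tangent to $\partial\Omega$ at $x$, this line separates from $\overline\Omega$ only at second order in $s-t$, so I expect the projection to preserve the initial velocity. The construction reuses precisely the tubular-neighborhood computation that was employed in the proof of Lemma~\ref{teoremap}.

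Concretely, I would set $\gamma_0(s):=x+(s-t)v$ for $s\in[t,T]$. Since $b_\Omega\in C^2(\Sigma_{\rho_0})$ and $\langle Db_\Omega(x),v\rangle=\langle\nu(x),v\rangle=0$, a Taylor expansion of $b_\Omega$ at $x$ along $\gamma_0$ gives $|b_\Omega(\gamma_0(s))|=O((s-t)^2)$; hence there is $\delta\in(0,T-t]$ such that $\gamma_0([t,t+\delta])\subset\Sigma_{\rho_0}$. Next define
\begin{equation*}
\widehat\gamma(s):=\gamma_0(s)-d_\Omega(\gamma_0(s))\,Db_\Omega(\gamma_0(s)),\qquad s\in[t,t+\delta],
\end{equation*}
which coincides with the closest-point projection of $\gamma_0(s)$ onto $\overline\Omega$, is therefore in $\overline\Omega$, is Lipschitz in $s$, and satisfies $\widehat\gamma(t)=x$. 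The same computation that underlies \eqref{d2} (see also \cite[Lemma 3.1]{cc}) then yields, for a.e.\ $s\in[t,t+\delta]$,
\begin{equation*}
\dot{\widehat\gamma}(s)=v-\langle Db_\Omega(\gamma_0(s)),v\rangle Db_\Omega(\gamma_0(s))\mathbf{1}_{\Omega^c}(\gamma_0(s))-d_\Omega(\gamma_0(s))\,D^2b_\Omega(\gamma_0(s))\,v.
\end{equation*}
Letting $s\downarrow t$, the middle term vanishes because $Db_\Omega(x)=\nu(x)$ and $\langle\nu(x),v\rangle=0$, while the last term vanishes because $d_\Omega(x)=0$; thus the right-hand derivative of $\widehat\gamma$ at $t$ is exactly $v$. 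To produce an element of $\Gamma_t[x]$, I would extend $\widehat\gamma$ to $[0,T]$ by setting $\widehat\gamma(s)=x$ on $[0,t]$ and $\widehat\gamma(s)=\widehat\gamma(t+\delta)$ on $[t+\delta,T]$, which gives an absolutely continuous curve with values in $\overline\Omega$.

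The only delicate step is verifying that both correction terms in the derivative formula for the projection vanish at $s=t$; this is precisely where the tangency hypothesis $\langle v,\nu(x)\rangle=0$ is used, and without it the initial velocity of $\widehat\gamma$ would acquire a spurious normal component. The remaining ingredients---existence of the tubular neighborhood, the projection formula, and the extension to $[0,T]$---are standard consequences of \eqref{dn} and the $C^2$ regularity of $\partial\Omega$.
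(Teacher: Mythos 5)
Your proposal is correct and follows essentially the same construction as the paper: project the straight line $s\mapsto x+(s-t)v$ onto $\overline\Omega$ via the nearest-point map and verify that the tangency condition $\langle v,\nu(x)\rangle=0$ kills the normal correction at $s=t$. The only (cosmetic) difference is that you compute the a.e.\ derivative of the projected curve and let $s\downarrow t$, whereas the paper works directly with the difference quotient and the integral identity $d_\Omega(\gamma(s))-d_\Omega(\gamma(t))=\int_t^s\langle Db_\Omega(\gamma(r)),\dot\gamma(r)\rangle\mathbf{1}_{\Omega^c}(\gamma(r))\,dr$ from \cite[Lemma 3.1]{cc}; both hinge on the same fact that $\langle Db_\Omega(\gamma_0(r)),v\rangle\to\langle\nu(x),v\rangle=0$ as $r\to t$, and your explicit constant extension of $\widehat\gamma$ to $[0,T]$ tidies up a point the paper leaves implicit.
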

\begin{proof}
	Let $(t,x)\in (0,T)\times \partial\Omega$ and let $\nu(x)$ be the outward unit normal vector to $\partial \Omega$ in $x$. Let $v\in \R^n$ be such that $\langle v,\nu(x)\rangle=0$. Let $R>0$ be small enough and let $\gamma$ be the trajectory defined by
	\begin{equation*}
	\gamma(s)=x+(s-t)v,
	\end{equation*}
	for all $s$ such that $|s-t|<R$. We denote by $\widehat{\gamma}$ the projection of $\gamma$ on $\overline{\Omega}$, i.e.,
	\begin{equation*}
	\widehat{\gamma}(s)=\gamma(s)-d_\Omega(\gamma(s))D\d(\gamma(s)),
	\end{equation*}
	for all $s$ such that $|s-t|<R$. By construction, we have that $\widehat{\gamma} \in \Gamma_t[x]$. We only have to prove that $\dot{\widehat{\gamma}}(t)=v$.
	Hence, recalling that $d_\Omega(\gamma(t))=0$ one has that
	\begin{align*}
	\frac{\widehat{\gamma}(s)-x}{s-t}= v-\frac{d_\Omega(\gamma(s))D\d(\gamma(s))}{s-t}=v-\left(\frac{d_\Omega(\gamma(s))-d_\Omega(\gamma(t))}{s-t}\right)D\d(\gamma(s)).
	\end{align*}
	By \cite[Lemma 3.1]{cc}, and by the definition of $\gamma$ we have that
	\begin{align*}
	&\left|\frac{d_\Omega(\gamma(s))-d_\Omega(\gamma(t))}{s-t}\right|= \left|\fint_t^s \langle D\d(\gamma(r)),\dot{\gamma}(r)\rangle\mathbf{1}_{\Omega^c}(\gamma(r))\,dr\right|\leq \fint_t^s |\langle D\d(\gamma(r)),\dot{\gamma}(r)\rangle|\,dr.
	\end{align*}
	Since $ r\longmapsto\langle  D\d(\gamma(r)),\dot \gamma(r)\rangle$ is continuous and vanishes at $r=0$,
	one has that
	$$
	\fint_t^s |\langle D\d(\gamma(r)),\dot \gamma(r)\rangle|\,dr\rightarrow 0.
	$$
	Hence,
	\begin{equation*}
	\left|\frac{d_\Omega(\gamma(s))-d_\Omega(\gamma(t))}{s-t}\right|\rightarrow 0,
	\end{equation*}
	and so $\dot{\widehat{\gamma}}(t)=v$. This completes the proof.
\end{proof}
\begin{proof}[Proof of  Theorem \ref{t4}]
	Let $(u,m)$ be a mild solution of the constrained MFG problem in $\overline{\Omega}$.
	Let us take $(t,x)\in \partial Q_m$ and $(p_1,p_2)\in D^+u(t,x)$. Let $\nu(x)$ be the outward unit normal  to $\partial \Omega$ in $x$. Let $v\in \R^n$ be such that $\langle v,\nu(x)\rangle=0$. Let $r>0$ be small enough and such that $0<t<t+r<T$. By Lemma \ref{l1} there exists $\gamma\in \Gamma_t[x]$ such that $\dot{\gamma}(t)=v$. Since $(p_1,p_2)\in D^+u(t,x)$ one has that
	\begin{equation}\label{a0}
	u(t+r,\gamma(t+r))-u(t,x)\leq \langle p_2, \gamma(t+r)-x\rangle + r p_1 + o(r).
	\end{equation}
	The dynamic programming principle ensures that
	\begin{equation}\label{a1}
	u(t+r,\gamma(t+r))-u(t,x)\geq -\int_t^{t+r} \Big[L(\gamma(s),\dot{\gamma}(s))+F(\gamma(s),m(s))\Big]\,ds.
	\end{equation}
	Moreover,
	\begin{equation}\label{a2}
	\langle p_2, \gamma(t+r)-x\rangle=\int_t^{t+r} \langle p_2,\dot{\gamma}(s)\rangle\,ds.
	\end{equation}
	Using \eqref{a1} and \eqref{a2} in \eqref{a0}, we deduce that
	\begin{align*}
	-\int_t^{t+r} \Big[L(\gamma(s),\dot{\gamma}(s))+F(\gamma(s),m(s))+\langle p_2,\dot{\gamma}(s)\rangle\Big]\,ds-r p_1\leq o(r).
	\end{align*}
	By our assumptions on $L$ and $F$ and by Theorem \ref{51}, one has that
	\begin{align}\label{lfg1}
	&L(\g(s),\dg(s))=L(x,\dg(t)) +o(1),\nonumber\\
	&F(\g(s),m(s))=F(x,m(t)) +o(1),\\
	&\langle p_2,\dg(s)\rangle = \langle p_2,\dg(t)\rangle +o(1),\nonumber
	\end{align}
	for all $s\in [t,t+r]$.
	Using \eqref{lfg1}, dividing by $r$, and passing to the limit for $r \rightarrow 0$ we obtain 
	\begin{equation}\label{a3}
	-p_1 -\langle p_2,v\rangle-L(x,v)-F(x,m(t))\leq 0.
	\end{equation}
	By the arbitrariness of $v$ and  the definition of $\ho$, \eqref{a3} implies that 
	\begin{equation*}
	-p_1+ \ho(x,p_2)\leq F(x,m(t)).
	\end{equation*}
	Now, we prove that the converse inequality also holds. Let $\gamma:[0,T]\rightarrow \overline{\Omega}$ be an optimal trajectory  such that $\gamma(t)=x$. Since $\gamma(t)\in \partial{\Omega}$, and $\gamma(s)\in \overline{\Omega}$ for all $s\in [0,T]$ one has that $\langle\dot{\gamma}(t),\nu(x)\rangle=0$. Let $r>0$ be small enough and such that $0<t-r\leq t$. Since $(p_1,p_2)\in D^+u(t,x)$, and by the dynamic programming principle one has that
	\begin{align*}
	&\int_{t-r}^t \Big[L(\gamma(s),\dot{\gamma}(s))+F(\gamma(s),m(s))\Big]\,ds= u(t-r,\gamma(t-r))-u(t,\gamma(t))\\
	&\leq -\langle p_2,\gamma(t)-\gamma(t-r)\rangle -r p_1 + o(r).
	\end{align*}
	Hence, we obtain 
	\begin{align*}
	\int_{t-r}^t \Big[L(\gamma(s),\dot{\gamma}(s))+F(\gamma(s),m(s))+\langle p_2,\dot{\gamma}(s)\rangle\Big]\,ds +r p_1\leq o(r).
	\end{align*}
	Arguing as above we deduce that
	\begin{align*}
	-p_1 -[\langle p_2,\dot{\gamma}(t) \rangle +L(x,\dot{\gamma}(t))]\geq F(x,m(t)).
	\end{align*}
	Since $\langle\dot{\gamma}(t),\nu(x)\rangle=0$,  by the definition of $\ho$ we conclude that
	\begin{align*}
	-p_1+\ho(x,p_2)&=-p_1 + \sup_{\tiny\begin{array}{c}
		v\in\R^n\\
		\langle v,\nu(x)\rangle=0
		\end{array}}\{-\langle p_2,v \rangle -L(x,v)\}\\
	&\geq-p_1-\langle p_2,\dot{\gamma}(t) \rangle -L(x,\dot{\gamma}(t))\geq F(x,m(t)).
	\end{align*}
	This completes the proof.
\end{proof}
\noindent

\noindent
\begin{remark}\label{r2}
	Let $(t,x) \in \partial Q_m$. By the definition of $\ho$ for all $p\in D^+_x u(t,x)$ one has that
	\begin{equation*}
	\ho(x,p)=\ho(x,\pt),
	\end{equation*}
	where $\pt$ is the tangential component of $p$.
\end{remark}
\noindent
In the next result, we give a full description of $D^+u(t,x)$ at $(t,x)\in \partial Q_m$.
\begin{proposition}\label{rmax}
	Let $(u,m)$ be a mild solution of the constrained MFG problem in $\overline{\Omega}$ and let $(t,x)\in \partial Q_m$. The following holds true.
	\begin{enumerate}
		\item[(a)] The partial derivative of $u$ with respect to $t$, denoted by $\partial_t u(t,x)$, does exist and
		\begin{equation*}
		D^+u(t,x)=\{\partial_t u(t,x)\}\times D^+_xu(t,x).
		\end{equation*}
		\item[(b)] All $p\in D_x^+u(t,x)$ have the same tangential component, which will be denoted by $\pttx$, that is,
		\begin{equation}\label{pt1}
		\Big\{p^\tau \in \R^n: p\in D^+_xu(t,x)\Big\}=\big\{\pttx\big\}.
		\end{equation}
		\item[(c)] For all $\theta\in \mathbb{R}^n$ such that $|\theta|=1$ and $\langle\theta,\nu(x)\rangle=0$ one has that
		\begin{equation}\label{dt}
		\partial_\theta^+ u(t,x)=\langle \pttx,\theta \rangle.
		\end{equation}
		Moreover,
		\begin{equation}\label{dn1}
		- \partial_{-\nu}^+ u(t,x)=\lambda_+(t,x):= \max\{\lambda_p(t,x): p\in D^+_xu(t,x)\},
		\end{equation}
		where
		\begin{equation*}
		\c(t,x)=\max\{\lambda \in \R : \pttx + \lambda \nu(x) \in D_x^+u(t,x)\}, \ \ \forall p\in D_x^+u(t,x).
		\end{equation*}
		\item[(d)] $D^+_xu(t,x)=\{p\in \R^n: p=\pttx+\lambda\nu(x), \  \lambda\in(-\infty, \cp(t,x)]\}$.
	\end{enumerate}
\end{proposition}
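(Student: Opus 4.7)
My plan is to use Theorem~\ref{t4} together with the strict convexity of the tangential Hamiltonian $\ho(x,\cdot)$ along the tangent hyperplane to $\partial\Omega$ at $x$, a property inherited from the uniform convexity of $L(x,\cdot)$ in hypothesis (L1) via Legendre duality (recalling also from Remark~\ref{r2} that $\ho(x,p)=\ho(x,p^\tau)$). I would prove (b) together with the tangential part of (c) first, then (a), then (d) and the normal part of (c), and finally address the reverse inclusion $\{\partial_t u(t,x)\}\times D^+_xu(t,x)\subseteq D^+u(t,x)$ of (a).

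For uniqueness on the spacetime superdifferential, I take $(p_1^a,p_2^a),(p_1^b,p_2^b)\in D^+u(t,x)$. Since $D^+u(t,x)$ is convex (using the semiconcavity of $u$ from Corollary~\ref{c41}), the midpoint lies in $D^+u(t,x)$; writing the identity of Theorem~\ref{t4} for all three elements and invoking the strict convexity of $\ho(x,\cdot)$ in the tangential variable forces $(p_2^a)^\tau=(p_2^b)^\tau$ and, a posteriori, $p_1^a=p_1^b$. I denote the common values by $\pttx$ and $p_1^*$. To transfer this uniqueness from $D^+u(t,x)$ to $D^+_xu(t,x)$, I fix a unit vector $\theta$ with $\langle\theta,\nu(x)\rangle=0$: since $t\in(0,T)$, both $(0,\pm\theta)\in T_{[0,T]\times\overline\Omega}(t,x)$ and both $\pm\theta\in T_{\overline\Omega}(x)$, so Lemma~\ref{ax5} applied in the spacetime setting and in the spatial slice gives
$$
\min_{p\in D^+_xu(t,x)}\langle p,\theta\rangle=\partial^{+}_\theta u(t,\cdot)(x)=\partial^{+}_{(0,\theta)}u(t,x)=\min_{(q_1,q_2)\in D^+u(t,x)}\langle q_2,\theta\rangle=\langle\pttx,\theta\rangle,
$$
and likewise with $-\theta$ replacing $\theta$. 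Hence $\langle p-\pttx,\theta\rangle=0$ for every tangential $\theta$ and every $p\in D^+_xu(t,x)$, which means $p-\pttx$ is parallel to $\nu(x)$. This proves (b) and formula \eqref{dt} in (c).

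Next, I apply Lemma~\ref{ax5} to $u$ in the directions $(\pm 1,0)\in T_{[0,T]\times\overline\Omega}(t,x)$: since every element of $D^+u(t,x)$ has time component $p_1^*$, one gets $\partial^{+}_{(1,0)}u(t,x)=p_1^*=-\partial^{+}_{(-1,0)}u(t,x)$, so $\partial_t u(t,x)$ exists and equals $p_1^*$. This establishes the inclusion $D^+u(t,x)\subseteq\{\partial_t u(t,x)\}\times D^+_xu(t,x)$. The normal formula \eqref{dn1} in (c) is exactly Proposition~\ref{prop2.3} applied to the semiconcave function $u(t,\cdot)$ on $\overline\Omega$. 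For (d), (b) gives $D^+_xu(t,x)\subseteq\pttx+\mathbb R\nu(x)$; closedness of the Fr\'echet superdifferential ensures the supremum $\lambda_+(t,x)$ is attained, and Proposition~\ref{p5} then yields $\pttx+\lambda\nu(x)\in D^+_xu(t,x)$ for every $\lambda\le\lambda_+(t,x)$.

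The remaining task---the reverse inclusion $\{\partial_t u(t,x)\}\times D^+_xu(t,x)\subseteq D^+u(t,x)$ of (a)---is where I expect the main obstacle. The spacetime analogue of Proposition~\ref{p5}, proved by the same curvature estimate $\langle\nu(x),y-x\rangle\le C|y-x|^2$ valid for $y\in\overline\Omega$ close to $x\in\partial\Omega$, shows that $(q_1,q_2+\mu\nu(x))\in D^+u(t,x)$ whenever $(q_1,q_2)\in D^+u(t,x)$ and $\mu\le 0$; starting from any reference element $(\partial_t u(t,x),\pttx+\lambda_0\nu(x))\in D^+u(t,x)$ this covers all $\lambda\le\lambda_0$. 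The hard part will be pushing $\lambda_0$ up to $\lambda_+(t,x)$: since the superdifferential at interior points is bounded by the Lipschitz constant of $u$, the required approximating sequence cannot be drawn from interior points and must be built along $\partial\Omega$ near $x$, for instance by exploiting reachable gradients obtained from optimal trajectories (Corollary~\ref{coro.ukeblsrnd}) at nearby boundary points whose dual arcs at time $t$ have normal components tending to the extremal $\lambda_+(t,x)$; the upper semicontinuity of the superdifferential (Proposition~\ref{pp2}) then delivers the missing elements.
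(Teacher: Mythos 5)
Your plan follows the same core route as the paper: uniqueness of $p_1$ and of the tangential component of $p_2$ over $D^+u(t,x)$ comes from Theorem~\ref{t4} together with the strict convexity of $\ho(x,\cdot)$ on $\nu(x)^\perp$ and the convexity of the superdifferential, and then Lemma~\ref{ax5}, Proposition~\ref{prop2.3}, and Proposition~\ref{p5} handle (c) and (d). You are in fact more careful than the paper's written proof in one respect: you explicitly bridge $D^+u(t,x)$ (the spacetime superdifferential, to which Theorem~\ref{t4} applies) and $D^+_x u(t,x)$ (the spatial one appearing in the statement) via Lemma~\ref{ax5} in tangential directions and in $(\pm 1,0)$, and you explicitly construct $\partial_t u(t,x)$ from matching one-sided time derivatives. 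The paper elides these steps, passing directly from $p_1=q_1$, $p_2^\tau=q_2^\tau$ to ``(a) and (b) hold.''

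Where you fall short is the reverse inclusion $\{\partial_t u(t,x)\}\times D^+_x u(t,x)\subseteq D^+ u(t,x)$ in (a). You correctly note that once some $(\partial_t u(t,x),\pttx+\lambda_0\nu(x))$ lies in $D^+u(t,x)$, the spacetime version of Proposition~\ref{p5} covers all $\lambda\le\lambda_0$, and that the issue is reaching $\lambda_0=\cp(t,x)$. But the route you sketch---reachable gradients from dual arcs at nearby boundary points via Corollary~\ref{coro.ukeblsrnd} and Proposition~\ref{pp2}---is both more work than needed and somewhat circular, since it anticipates the convergence statement of Theorem~\ref{uc1}(iii), which itself uses the present proposition. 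The elementary closure is to apply Lemma~\ref{ax5} once more to $u$ on $(0,T)\times\overline{\Omega}$, now in the direction $(0,-\nu(x))$ (admissible since $(0,\nu(x))$ is the spacetime outward normal at $(t,x)$). The resulting one-sided derivative coincides with the spatial one $\partial^+_{-\nu}u(t,\cdot)(x)$, because the latter is the limit over the subfamily $\theta'=(0,\theta'')$ of the former and both limits exist. Hence
\begin{equation*}
\max_{(q_1,q_2)\in D^+u(t,x)}\langle q_2,\nu(x)\rangle \;=\; -\partial^{+}_{-\nu}u(t,\cdot)(x)\;=\;\cp(t,x).
\end{equation*}
Since $D^+u(t,x)$ is closed and bounded (by Lipschitz continuity of $u$), the maximum is attained; combined with the already-proved uniqueness of $p_1$ and $p_2^\tau$, this exhibits $\bigl(\partial_t u(t,x),\,\pttx+\cp(t,x)\nu(x)\bigr)\in D^+u(t,x)$. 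The spacetime Proposition~\ref{p5} then delivers $\{\partial_t u(t,x)\}\times\{\pttx+\lambda\nu(x):\lambda\le\cp(t,x)\}$, which equals $\{\partial_t u(t,x)\}\times D^+_x u(t,x)$ by (d). No optimal trajectories are needed.
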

\begin{proof}
	Let $(u,m)$ be a mild solution of the constrained MFG problem in $\overline{\Omega}$. Let $(t,x)\in \partial Q_m$ and let $\nu(x)$ be the outward unit normal  to $\partial \Omega$ in $x$. Recall that, by Theorem \ref{t4} and  Remark \ref{r2},
	\begin{equation}\label{e2}
	-p_1+\ho(x,\ptd)=F(x,m(t)), \ \ \ \forall (p_1,p_2)\in D^+u(t,x).
	\end{equation}
Let us prove $(a)$ and $(b)$ together, arguing by contradiction. Let $p=(p_1,p_2)$, $q=(q_1,q_2) \in D^+ u(t,x)$ be such that $p^\tau_2\neq q^\tau_2$. Let $\lambda\in[0,1]$. Since $D^+u(t,x)$ is a convex set, we have that $$
	\pl=(p_{1,\lambda},p_{2,\lambda})=(\lambda p_1+(1-\lambda)q_1,\lambda p_2+(1-\lambda)q_2)\in D^+u(t,x).$$ 
	Moreover,  observe that
	\begin{align*}
	\lambda (\ptd+\pnd)+(1-\lambda)(\qtd+\qnd)= [\lambda \ptd+(1-\lambda)\qtd]+ [\lambda \pnd+(1-\lambda)\qnd]= \plt+\pln.
	\end{align*}
	Since $\pl\in D^+u(t,x)$, \eqref{e2} holds true and
	\begin{align*}
	\ho(x,\plt)&=p_{1,\lambda}+F(x,m(t))=\lambda p_1+(1-\lambda)q_1+F(x,m(t))\\
	&= \lambda[p_1+F(x,m(t))]+ (1-\lambda)[q_1+F(x,m(t))].
	\end{align*}
	Since $\ho$ is strictly convex on the orthogonal complement, $(\nu(x))^\perp$, of $\nu(x)$,  recalling that $p$ and $q$ satisfy \eqref{e2}  we have that
	\begin{equation*}
	\lambda \ho(x,\ptd)+(1-\lambda)\ho(x,\qtd)>\ho(x,\plt)=\lambda \ho(x,\ptd)+(1-\lambda)\ho(x,\qtd).
	\end{equation*}
	So, we conclude that $p_1=q_1$ and $\ptd=\qtd$. Thus, $(a)$ and $(b)$ hold true.
In order to prove $(c)$, let $\theta\in \R^n$ be such that $|\theta|=1$ and $\langle \theta, \nu(x)\rangle=0$. By the local semiconcavity of $u$ in $(0,T)\times \overline{\Omega}$,  Lemma \ref{ax5}, and  $(b)$ we deduce that
	\begin{equation*}
	\partial_\theta^+ u(t,x)=\min_{p\in D^+_xu(t,x)}\langle p, \theta \rangle =\langle \pttx, \theta \rangle,
	\end{equation*}
	which proves \eqref{dt}. Appealing to Proposition \ref{prop2.3},  the local semiconcavity of $u$ implies that
	\begin{align*}
	-\partial_{-\nu}^+ u(t,x)=\max \{\c(t,x) : p\in D^+_xu(t,x)\}=:\cp(t,x),
	\end{align*}
	where
	\begin{equation*}
	\c(t,x)=\max \{\lambda\in \R:  \pttx+ \lambda \nu(x) \in D^+_xu(t,x)\}.
	\end{equation*}
Finally, Proposition \ref{p5} and $(c)$ yield $(d)$. This completes the proof.
\end{proof}
\begin{theorem}\label{uc1}
	Let $(u,m)$ be a mild solution of the constrained MFG problem in $\overline{\Omega}$. Then the following holds true.
	\begin{enumerate}
		\item[(i)] For any $(t,x)\in (0,T)\times \Omega$ one has that
		\begin{equation}
		\limsup_{\tiny\begin{array}{c}
			(s,y)\in (0,T)\times\Omega\\
			(s,y)\rightarrow (t,x)
			\end{array}}
		D^+u(s,y) \subset D^+u(t,x).
		\end{equation}
		In particular, for all $(t,x)\in Q_m$,  
		\begin{equation}
		\limsup_{\tiny\begin{array}{c}
			(s,y)\in (0,T)\times\Omega\\
			(s,y)\rightarrow (t,x)
			\end{array}} D^+u(s,y)= \Big\{\Big(\partial_tu(t,x), Du(t,x)\Big)\Big\}.
		\end{equation}
		\item[(ii)]  Let $(t,x)\in \partial Q_m$. Then,
		\begin{equation}\label{ds}
		\lim_{\tiny\begin{array}{c}
			(s,y)\in (0,T)\times \Omega,\\
			\mbox{\rm $u$ differentiable at $(s,y)$},\\
			(s,y)\rightarrow (t,x)
			\end{array}} (\partial_tu(s,y), Du(s,y))=\Big(\partial_tu(t,x),\ \pttx+\cp(t,x)\nu(x)\Big),
		\end{equation}
where $\pttx$ and $\cp(t,x)$ are given in \eqref{pt1} and \eqref{dn1}, respectively.
		Moreover, one has that
		\begin{equation}\label{kjhzfdnc}
		-\partial_t u(t,x)+ H(x, \pttx + \lambda_+(t,x) \nu(x))=F(x,m(t)).  
		\end{equation}
	\item[(iii)] Let $(t,x)\in \partial Q_m$. Then,
		\begin{equation}\label{dsbis}
		\lim_{\tiny\begin{array}{c}
			(s,y)\in \partial Q_m,\\
			(s,y)\rightarrow (t,x)
			\end{array}} (\partial_tu(s,y), D^\tau_xu(s,y)+\cp(s,y)\nu(y))=\Big(\partial_tu(t,x),\ \pttx+\cp(t,x)\nu(x)\Big),
		\end{equation}
where $\pttx$ and $\cp(t,x)$ are given in \eqref{pt1} and \eqref{dn1}, respectively.		
	\end{enumerate}
\end{theorem}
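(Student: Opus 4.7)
The plan is to derive all three parts from the local semiconcavity of $u$ established in Corollary~\ref{c41}, the description of the superdifferential on $\partial Q_m$ given by Proposition~\ref{rmax}, the tangential Hamilton-Jacobi identity of Theorem~\ref{t4}, and the strict convexity of $H(x,\cdot)$ coming from (H1).

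For part~(i), since $u$ is locally semiconcave on $(0,T)\times\overline{\Omega}$, Proposition~\ref{pp2}, applied to the space-time variable, yields the upper semicontinuity of the set-valued map $D^+u$. When $(t,x)\in Q_m$, Proposition~\ref{t6} asserts the differentiability of $u$, hence $D^+u(t,x)=\{(\partial_tu(t,x),Du(t,x))\}$ and the inclusion collapses to the claimed equality.

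For part~(ii), I would take a sequence $(s_n,y_n)\in(0,T)\times\Omega$ of points of differentiability of $u$ converging to $(t,x)\in\partial Q_m$; such points exist densely because $u$ is locally Lipschitz and semiconcave. Extracting a subsequence, assume $(\partial_tu(s_n,y_n),Du(s_n,y_n))\to(p_1,p_2)$; by~(i) this limit lies in $D^+u(t,x)$, and Proposition~\ref{rmax}(a)(d) then imposes $p_1=\partial_tu(t,x)$ together with $p_2=\pttx+\lambda\nu(x)$ for some $\lambda\leq\lambda_+(t,x)$. Since $u$ is a constrained viscosity solution of \eqref{1hj} and differentiable at each interior point $(s_n,y_n)$, the Hamilton-Jacobi equation holds there pointwise; passing to the limit gives
\[
-\partial_tu(t,x)+H(x,\pttx+\lambda\nu(x))=F(x,m(t)).
\]
Combined with the tangential equation $-\partial_tu(t,x)+H^\tau(x,\pttx)=F(x,m(t))$ (Theorem~\ref{t4} and Remark~\ref{r2}), this forces $H(x,\pttx+\lambda\nu(x))=H^\tau(x,\pttx)$. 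By the defining property of $\lambda_+(t,x)$, the vector $-D_pH(x,\pttx+\lambda_+(t,x)\nu(x))$ is tangential to $\partial\Omega$ at $x$, hence it also realizes the supremum defining $H^\tau(x,\pttx)$, so $H(x,\pttx+\lambda_+(t,x)\nu(x))=H^\tau(x,\pttx)$ as well. Both $\lambda$ and $\lambda_+(t,x)$ therefore minimize the strictly convex map $\mu\mapsto H(x,\pttx+\mu\nu(x))$, so uniqueness of the minimizer gives $\lambda=\lambda_+(t,x)$. Every accumulation point is thus $(\partial_tu(t,x),\pttx+\lambda_+(t,x)\nu(x))$, which proves the full convergence in \eqref{ds}, and \eqref{kjhzfdnc} is just the displayed identity evaluated at this specific $\lambda$.

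Part~(iii) follows from essentially the same argument, now run along $\partial Q_m$: for each $(s_n,y_n)\in\partial Q_m$ one has $D^\tau_xu(s_n,y_n)+\lambda_+(s_n,y_n)\nu(y_n)\in D^+_xu(s_n,y_n)$ by Proposition~\ref{rmax}(d), and \eqref{kjhzfdnc}, already proved at every point of $\partial Q_m$ in part~(ii), yields $H(y_n,D^\tau_xu(s_n,y_n)+\lambda_+(s_n,y_n)\nu(y_n))=H^\tau(y_n,D^\tau_xu(s_n,y_n))$ at each $n$. The $C^2$-regularity of $\partial\Omega$ gives continuity of $\nu$, and upper semicontinuity of $D^+u$ from~(i) together with continuity of $H$, $H^\tau$, $F$ allow me to pass to the limit; strict convexity of $H$ in the normal direction again identifies the limit $\lambda$ as $\lambda_+(t,x)$, giving \eqref{dsbis}. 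The main obstacle in~(ii) and~(iii) is isolating $\lambda$ as \emph{exactly} $\lambda_+(t,x)$ rather than merely $\lambda\leq\lambda_+(t,x)$: this relies crucially on the strict convexity of $H(x,\cdot)$ restricted to the line $\pttx+\mathbb{R}\nu(x)$, which is where the rigidity of the boundary behavior truly manifests.
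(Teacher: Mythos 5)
Your proposal correctly handles part (i) and follows the paper's overall structure for parts (ii) and (iii) — extract a convergent subsequence, identify the limit in $D^+u(t,x)$ via Proposition~\ref{rmax}, pass to the limit in the Hamilton–Jacobi equation, and pin down the normal component using strict convexity of $H$ restricted to the line $\pttx+\R\nu(x)$. However, there is a genuine logical gap at the crucial step where you show that \emph{some} $\lambda$ with $H(x,\pttx+\lambda\nu(x))=H^\tau(x,\pttx)$ must equal $\lambda_+(t,x)$.

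You invoke ``the defining property of $\lambda_+(t,x)$'' to assert that $-D_pH(x,\pttx+\lambda_+(t,x)\nu(x))$ is tangential to $\partial\Omega$, which then gives $H(x,\pttx+\lambda_+(t,x)\nu(x))=H^\tau(x,\pttx)$ and lets you conclude via strict convexity. But in the paper $\lambda_+(t,x)$ is \emph{not} defined by a tangency condition: it is defined in Proposition~\ref{rmax} through the superdifferential and directional derivative, namely $\lambda_+(t,x)=-\partial^+_{-\nu}u(t,x)=\max\{\lambda\in\R:\pttx+\lambda\nu(x)\in D^+_xu(t,x)\}$. The tangency characterization you appeal to is the content of Remark~\ref{rem.jeazl}, which is stated as a consequence of the proof of Corollary~\ref{coro.hajebzfd}, and that corollary in turn uses Theorem~\ref{uc1} (specifically \eqref{kjhzfdnc}). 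So your argument implicitly assumes what you are trying to prove. A priori, from Proposition~\ref{rmax} alone one only knows that every accumulation point has normal component $\leq\lambda_+(t,x)$; nothing forces the maximum $\lambda_+$ itself to realize equality $H=H^\tau$, which is precisely the nontrivial content of \eqref{kjhzfdnc}.

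The paper closes this gap with a separate, purely analytic step that you are missing: it first establishes \eqref{kjhzfdnc} by choosing a \emph{particular} sequence $(t_k,x_k)$ of interior differentiability points with $(t_k-t,x_k-x)/|(t_k-t,x_k-x)|\to(0,-\nu(x))$. Writing the fractional semiconcavity estimate along this sequence and dividing by $|(t_k-t,x_k-x)|$ yields $-\partial^+_{-\nu}u(t,x)-\langle\pttx+\tilde\lambda\nu(x),\nu(x)\rangle\leq 0$, i.e.\ $\lambda_+(t,x)\leq\tilde\lambda$, which combined with $\tilde\lambda\leq\lambda_+(t,x)$ forces $\tilde\lambda=\lambda_+(t,x)$. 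This is what links the directional-derivative definition of $\lambda_+$ to the boundary behavior of $Du$ and proves \eqref{kjhzfdnc}. Only after that does the strict convexity argument (your step, and the paper's Step 3) apply to show that \emph{every} accumulation point equals $\lambda_+(t,x)$, giving \eqref{ds}, and similarly for \eqref{dsbis}. You would need to supply this normal-direction sequence argument (or an equivalent one) to make your proof non-circular.
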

\begin{proof}
Let $(u,m)$ be a mild solution of the constrained MFG problem in $\overline{\Omega}$. By Corollary \ref{c41},  Proposition \ref{t6}, and  \cite[Proposition 3.3.4]{cs} we deduce that $(i)$ holds true. Hence, we only need to analyze   $(ii)$ and $(iii)$.\\
\underline{Step 1.} \\
Let $(t,x)\in \partial Q_m$. Let $u$ be differentiable at $(s_k,y_k)\in (0,T)\times \Omega$ with $(s_k,y_k)\to (t,x)$. Since $u$ is locally semiconcave,
the bounded sequence $(\partial_tu(s_k,y_k), Du(s_k,y_k))$  has  a subsequence (labelled in the same way) which  converges to $(p_1,p_2)\in D^+u(t,x)$. Then Proposition \ref{rmax} implies that $p_1=\partial_tu(t,x)$ and that there exists $\overline{\lambda} \in (-\infty, \lambda_+(t,x)]$ such that $p_2=\pttx +\overline{\lambda} \nu(x)$. To prove \eqref{ds}, it only remains  to show that $\overline{\lambda} = \lambda_+(t,x)$. This will be achieved in Step~3. 
Since $u$ is a viscosity solution of the Hamilton-Jacobi equation and is differentiable at $(s_k,y_k)$, we have that
\begin{equation}\label{k13}
-\partial_t u(s_k,y_k)+ H(y_k, Du(s_k,y_k))=F(y_k,m(s_k)).
\end{equation}
Passing to the limit in \eqref{k13} we obtain 
\begin{equation}\label{kjhzfdnc0}
-\partial_t u(t,x)+ H(x, \pttx +\overline{\lambda} \nu(x))=F(x,m(t)).  
\end{equation}
\underline{Step 2.} \\
The next step consists in proving that \eqref{kjhzfdnc} holds by choosing a particular sequence of points. Let $(t_k,x_k)\in (0,T)\times \Omega$ be a sequence such that:
\begin{enumerate}
	\item[1.] $(t_k,x_k)\xrightarrow{k\rightarrow \infty}(t,x)$;
	\item[2.] $u$ is differentiable in $(t_k,x_k)$;
	\item[3.] $\lim_{k\rightarrow +\infty}\frac{(t_k-t,x_k-x)}{|(t_k-t,x_k-x)|}=(0,-\nu(x)).$
\end{enumerate}
\noindent
Arguing as above, we know that any cluster point of $(Du(t_k,x_k))$ is of the form $\pttx+\tilde\lambda \nu(x)$, with $\tilde \lambda\leq \lambda_+(t,x)$, and satisfies
\begin{equation}\label{kshbfdgn}
-\partial_t u(t,x)+ H(x, \pttx +\tilde \lambda \nu(x))=F(x,m(t)).  
\end{equation}
On the other hand, by the local semiconcavity of $u$ (Theorem \ref{t1}), we also have that
$$
u(t,x)-u(t_k,x_k)-\partial_tu(t_k,x_k)(t-t_k)-\langle Du(t_k,x_k), (x-x_k)\rangle \leq C( |t-t_k|+|x-x_k|)^{3/2}. 
$$
Therefore,
$$
u(t,x)-u(t,x_k)-\partial_tu(t_k,x_k)(t-t_k)-\langle Du(t_k,x_k), (x-x_k)\rangle \leq C(( |t-t_k|+|x-x_k|)^{3/2}+|t_k-t|). 
$$
Dividing this inequality by $|(t_k-t,x_k-x)|$ and passing to the limit, we obtain 
$$
-\partial_{-\nu}^+u(t,x)- \langle \pttx+ \tilde \lambda \nu(x), \nu(x)\rangle \leq 0.
$$
By \eqref{dn1} we have that
$$
\lambda_+(t,x)= -\partial_{-\nu}^+u(t,x) \leq \tilde \lambda. 
$$
This proves that $ \tilde \lambda= \lambda_+(t,x)$, whereas \eqref{kjhzfdnc}  follows from \eqref{kshbfdgn}.\\
\underline{Step 3.}\\
We finally show that the limit point $\overline{\lambda}$, defined in Step~1, equals $\lambda_+(t,x)$. Indeed, arguing by contradiction, let us assume that $\overline{\lambda} <\lambda_+(t,x)$. Then, by \eqref{kjhzfdnc0},  \eqref{kjhzfdnc}, and the  strict convexity of $H$, we have that, for any $\lambda \in (\overline{\lambda}, \lambda_+(t,x))$, 
\begin{align*}
F(x,m(t))>&  -\partial_t u(t,x)+ H(x, \pttx + \lambda \nu(x)) \geq 
-\partial_t u(t,x)+ H^\tau(x, \pttx + \lambda \nu(x)) \\
& =
-\partial_t u(t,x)+ H^\tau (x, \pttx).
\end{align*}
By Theorem \ref{t4}, we deduce that 
$$
-\partial_t u(t,x)+ H^\tau (x, \pttx)=F(x,m(t)), 
$$
which leads to a contradiction. Therefore, we have that  $\overline{\lambda} =\lambda_+(t,x)$,  which in turn implies \eqref{ds}.\\
\underline{Step 4.} \\
The proof of point $(iii)$ runs exactly along the same lines as for point $(ii)$: if $(s_k,y_k)$ belongs to $\partial Q_m$ and converges to $(t,x)$, then the bounded sequence $(\partial_tu(s_k,y_k), D^\tau_x u(s_k,y_k)+\lambda_+(s_k,y_k)\nu(y_k))$ converges (up to a subsequence) to some $(p_1,p_2)\in  D^+u(t,x)$. As in Step~1, we have that $p_1=\partial_tu(t,x)$ while $p_2=\pttx+\overline{\lambda} \nu(x))$ for some $\overline{ \lambda} \leq \lambda_+(t,x)$ and 
$$
-\partial_tu(t,x) +H(x, \pttx+\overline{\lambda} \nu(x))=F(x,m(t)). 
$$
Then,  as in Step~3, we conclude that $\overline{\lambda}= \lambda_+(t,x)$.


\end{proof}
\noindent
A direct consequence of the results of this section is the following theorem.
\begin{theorem}\label{thm.iaezd}
	Let $H$, $F$ and $G$ satisfy hypotheses $(H0)-(H3)$ and $(D1)-(D3)$, respectively.
	Then, $u$ is a constrained viscosity solution of
	\begin{align*}
	\begin{cases}
	-\partial_t u+ H(x,Du)=F(x,m(t)) \ \ \ \ \text{in} \ (0,T)\times \overline{\Omega}\\
	u(x,T)=G(x,m(T)) \ \ \ \text{in} \ \overline{\Omega}.
	\end{cases}
	\end{align*}
	Moreover, $u$ is differentiable at any $(t,x)\in Q_m$ with
	\begin{equation*}
	-\partial_t u+ H(x,Du)=F(x,m(t)) \ \ \ \ \text{in} \ Q_m,
	\end{equation*}
	while, on $\partial Q_m$, the time-derivative $\partial_t u$ exists and satisfies the equation
		\begin{equation*}
	-\partial_t u+ H^\tau(x,\pttx)=F(x,m(t)) \ \ \ \ \text{in} \ \partial Q_m.
	\end{equation*}
\end{theorem}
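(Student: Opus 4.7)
The plan is to assemble the statement from the results already established earlier in Section~4, since every component of the theorem has essentially been proved in the lemmas and propositions preceding it. First I would verify the terminal condition: by Definition~\ref{def.mildsol}, $u(T,x) = \inf_{\gamma \in \Gamma_T[x]} \{0 + G(\gamma(T),m(T))\} = G(x,m(T))$, because for $t=T$ the admissible curves in $\Gamma_T[x]$ reduce to the constant arc and the cost integral vanishes. The constrained viscosity solution property in $(0,T)\times \overline{\Omega}$ for the HJB equation driven by $m$ is precisely the content of the Proposition stated in Subsection~4.3 just after Definition~\ref{d1}, so I would simply invoke it.

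Next I would deduce the pointwise equation on $Q_m$. By Proposition~\ref{t6}, $u$ is differentiable at every $(t,x) \in Q_m$, so $D^+u(t,x)$ is the singleton $\{(\partial_t u(t,x), Du(t,x))\}$. Inserting this singleton into the identity from Theorem~\ref{t3} immediately gives
\begin{equation*}
-\partial_t u(t,x) + H(x, Du(t,x)) = F(x, m(t)), \qquad (t,x) \in Q_m.
\end{equation*}

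For the tangential equation on $\partial Q_m$, the relevant ingredients are Theorem~\ref{t4} together with Proposition~\ref{rmax} and Remark~\ref{r2}. Proposition~\ref{rmax}(a) asserts that $D^+u(t,x) = \{\partial_t u(t,x)\} \times D^+_x u(t,x)$, which in particular provides the existence of the classical partial derivative $\partial_t u(t,x)$ and its uniqueness. Proposition~\ref{rmax}(b) identifies the common tangential component $\pttx$ of every $p \in D^+_x u(t,x)$, and Remark~\ref{r2} reduces $H^\tau(x,p)$ to $H^\tau(x, \pttx)$. Substituting these facts into the equality supplied by Theorem~\ref{t4} gives
\begin{equation*}
-\partial_t u(t,x) + H^\tau(x, \pttx) = F(x, m(t)), \qquad (t,x) \in \partial Q_m.
\end{equation*}

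No step presents a genuine obstacle at this stage, since the substantive analysis has already been carried out: the fractional semiconcavity of $u$ (Corollaries~\ref{cor1} and \ref{c41}), the strict convexity argument that collapses $D^+u$ in the interior (Proposition~\ref{t6}), and the boundary identification of the tangential superdifferential (Proposition~\ref{rmax} and Theorem~\ref{uc1}). The only care required is notational bookkeeping, namely checking that the distinguished time derivative $\partial_t u$ extracted on $\partial Q_m$ indeed coincides with the classical one-sided limit given by Proposition~\ref{rmax}(a), and that the symbol $H^\tau$ in the final claim matches the definition~\eqref{defHtau} used in Theorem~\ref{t4}.
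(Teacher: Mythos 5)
Your proposal is correct and is essentially the same as the paper's (which in fact provides no explicit proof, simply noting that the theorem is ``a direct consequence of the results of this section''). You correctly assemble the terminal condition from the value-function formula, the constrained viscosity property from the unnumbered Proposition following Definition~\ref{d1}, the interior equation from Proposition~\ref{t6} together with Theorem~\ref{t3}, and the boundary equation from Theorem~\ref{t4}, Proposition~\ref{rmax}, and Remark~\ref{r2}.
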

\begin{corollario}\label{coro.hajebzfd} 
Let $H$, $F$ and $G$ satisfy hypotheses $(H0)-(H3)$ and $(D1)-(D3)$, respectively. Let $\eta\in {\mathcal P}_{m_0}(\Gamma)$ be a constrained MFG equilibrium and $(u,m)$ be the associated mild solution of the constrained MFG problem in $\overline{\Omega}$.  If $(t,x)\in Q_m\cup \partial Q_m$, then there exists $y\in \overline{\Omega}$ and an optimal trajectory $\gamma\in \Gamma^\eta[y]$ such that $\gamma(t)=x$. Moreover, if $p$ is the dual arc associated with $\gamma$, then 
\begin{equation}\label{dotgamma}
\dot \gamma(t)= -D_pH(x,p(t))\; {\rm where}\; p(t)=
\left\{\begin{array}{ll} Du(t,x) & {\rm if }\; (t,x)\in Q_m,\\
\pttx+ \lambda_+(t,x)\nu(x) & {\rm if }\; (t,x)\in \partial Q_m.
 \end{array}\right.
\end{equation}
\end{corollario}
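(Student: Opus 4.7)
The plan is to split the proof into three parts: (1) existence of the trajectory $\gamma$ passing through $x$ at time $t$, (2) the interior case $(t,x) \in Q_m$, and (3) the boundary case $(t,x) \in \partial Q_m$. Throughout, I will apply Corollary \ref{coro.ukeblsrnd} to the running cost $f(s,y,v):= L(y,v) + F(y,m(s))$, for which the associated Hamiltonian is $H_f(s,y,p) = H(y,p) - F(y,m(s))$; this gives $D_p H_f = D_p H$ and yields, along any optimal arc $\gamma$ with dual arc $p$, the inclusion
\begin{equation*}
\bigl(H(x,p(t))-F(x,m(t)),\, p(t)\bigr) \in D^+u(t,x).
\end{equation*}

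For the existence of $\gamma$: since $x\in \mathrm{supp}(m(t))$ and $m(t)=e_t\sharp \eta$, every neighborhood of $x$ in $\overline{\Omega}$ has positive pullback $\eta$-measure, so it meets $\mathrm{supp}(\eta) \subseteq \bigcup_{y}\Gamma^\eta[y]$. Choose $\gamma_n \in \mathrm{supp}(\eta)$ with $\gamma_n(t)\to x$. The uniform bound \eqref{l0} makes $\{\gamma_n\}$ equi-Lipschitz on $[0,T]$, so Arzelà--Ascoli extracts a uniform limit $\gamma$. Closedness of $\mathrm{supp}(\eta)$ gives $\gamma \in \mathrm{supp}(\eta) \subseteq \bigcup_y \Gamma^\eta[y]$, and continuity of $e_t$ yields $\gamma(t)=x$; we set $y=\gamma(0)$. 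For the interior case, the above inclusion yields $p(t)\in D^+_xu(t,x)$. By Proposition \ref{t6}, $u$ is differentiable at $(t,x)\in Q_m$, hence $D^+_xu(t,x)=\{Du(t,x)\}$ and $p(t)=Du(t,x)$. Formula \eqref{dotgamma} then follows from Remark \ref{rem.dotgamma}.

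For the boundary case, Proposition \ref{rmax}(d) writes $p(t) = \pttx + \lambda \nu(x)$ with $\lambda\leq \lambda_+(t,x)$. Combining the identity $H(x,p(t))-F(x,m(t))= \partial_t u(t,x)$ (from Corollary \ref{coro.ukeblsrnd} and Proposition \ref{rmax}(a)) with the boundary Hamilton--Jacobi equation of Theorem \ref{thm.iaezd}, namely $-\partial_t u(t,x) + H(x,\pttx + \lambda_+(t,x)\nu(x))=F(x,m(t))$, gives
\begin{equation*}
H(x,\pttx+\lambda\nu(x)) = H(x,\pttx+\lambda_+(t,x)\nu(x)).
\end{equation*}
To conclude $\lambda=\lambda_+(t,x)$ I reproduce Step 3 of the proof of Theorem \ref{uc1}: if $\lambda<\lambda_+(t,x)$, strict convexity of $H(x,\cdot)$ (from (H1)) forces $H(x,\pttx+\mu\nu(x))<H(x,p(t))$ for every $\mu\in(\lambda,\lambda_+(t,x))$, so $-\partial_t u(t,x) + H(x,\pttx+\mu\nu(x)) < F(x,m(t))$. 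Using $H\geq \ho$ together with Remark \ref{r2} ($\ho(x,\pttx+\mu\nu(x))=\ho(x,\pttx)$) yields $-\partial_t u(t,x) + \ho(x,\pttx) < F(x,m(t))$, contradicting Theorem \ref{t4}. Hence $\lambda=\lambda_+(t,x)$, and Remark \ref{rem.dotgamma} delivers $\dot\gamma(t) = -D_pH(x,\pttx+\lambda_+(t,x)\nu(x))$.

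The main obstacle is the last identification $\lambda=\lambda_+(t,x)$ in the boundary case. The scalar equation $H(x,\pttx+\lambda\nu(x))=H(x,\pttx+\lambda_+(t,x)\nu(x))$ does not determine $\lambda$ from strict convexity alone, since a strictly convex function of one variable can attain equal values at two distinct arguments. The closing argument therefore rests on coupling strict convexity with both the inequality $H\geq \ho$ and the boundary tangential Hamilton--Jacobi identity of Theorem \ref{t4}; this three-ingredient contradiction is exactly the mechanism already exploited in Step 3 of the proof of Theorem \ref{uc1}, and it is what makes the corollary a consequence of the material developed earlier rather than requiring a new estimate.
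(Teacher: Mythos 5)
Your proof is correct, and it agrees with the paper in its overall layout (existence of $\gamma$, interior case via Proposition~\ref{t6}, boundary case via Proposition~\ref{rmax}), but the mechanism you use to identify $\lambda$ with $\lambda_+(t,x)$ on $\partial Q_m$ is genuinely different from the paper's. You derive the scalar identity $-\partial_t u(t,x)+H(x,p(t))=F(x,m(t))$ directly from the superdifferential inclusion of Corollary~\ref{coro.ukeblsrnd} and the product structure in Proposition~\ref{rmax}(a), compare it with \eqref{kjhzfdnc} to get $H(x,\pttx+\lambda\nu(x))=H(x,\pttx+\lambda_+(t,x)\nu(x))$, and then rerun the strict-convexity-plus-$H\geq H^\tau$-plus-Theorem~\ref{t4} contradiction of Step~3 in Theorem~\ref{uc1}. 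The paper instead exploits a geometric fact you never invoke: because $\gamma\in C^{1,1}$ remains in $\overline{\Omega}$ and $\gamma(t)=x\in\partial\Omega$, the velocity $\dot\gamma(t)=-D_pH(x,p(t))$ must be tangent to $\partial\Omega$, so $\langle D_pH(x,\pttx+\overline{\lambda}\nu(x)),\nu(x)\rangle=0$, which exhibits $\overline{\lambda}$ as the critical (hence unique minimum) point of the strictly convex map $\lambda\mapsto H(x,\pttx+\lambda\nu(x))$; then, separately, the envelope/argmax argument together with $H=H^\tau$ at $\lambda_+$ shows $\lambda_+(t,x)$ is also the minimizer. Your route is more economical because it reuses the already-proved contradiction wholesale, whereas the paper's route produces, as a by-product, the geometric characterization of $\lambda_+$ as the unique $\lambda$ for which $-D_pH(x,\pttx+\lambda\nu(x))$ is tangent to $\Omega$ at $x$ (this is exactly Remark~\ref{rem.jeazl}, which the paper records immediately after the proof). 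Both arguments are valid and both rest on the same collection of earlier lemmas.
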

\begin{proof} The existence of $\gamma$ is an easy consequence of the definition of $m$ and  the uniform Lipschitz continuity of optimal trajectories. Let us now check that \eqref{dotgamma} holds. In view of Remark \ref{rem.dotgamma}, we have 
$$
\dot \gamma(t)= -D_pH(x,p(t)) ,
$$
where, by Corollary \ref{coro.ukeblsrnd}, $(H(x,p(t))-F(x,m(t)), p(t))$ belongs to $D^+u(t,x)$.  Then Proposition \ref{rmax} implies that $p(t)= Du(t,x)$ if $(t,x)\in Q_m$, while $p(t)=\pttx+\overline{\lambda} \nu(x)$ for some $\overline{\lambda} \leq \lambda_+(t,x)$ if $(t,x)\in \partial Q_m$. 

It remains to check that, in this second case, $\overline{\lambda}= \lambda_+(t,x)$. As $\gamma$ is of class $C^{1,1}([t,T],\overline{\Omega})$ and remains in $\overline \Omega$ with $\gamma(t)=x\in \partial \Omega$, we have that $\langle \dot \gamma(t), \nu(x)\rangle=0$. In particular
$$
\frac{d}{d\lambda} \ H(x, \pttx+\lambda \nu(x))\Big |_{\lambda=\overline{\lambda}}= \langle D_pH(x, \pttx+\overline{\lambda} \nu(x)), \nu(x)\rangle=
-\langle \dot \gamma(t), \nu(x)\rangle = 0.
$$
This proves that  the strictly convex map $\lambda \mapsto H(x, \pttx(t,x)+\lambda \nu(x))$ has a (unique) minimum at $\lambda = \overline{\lambda}$.  On the other hand, by Theorem \ref{uc1} and Theorem \ref{thm.iaezd} we have that
\begin{align*}
F(x,m(t))+\partial_tu(t,x) & = H(x, \pttx+\lambda_+(t,x)\nu(x))= H^\tau (x, \pttx)\\
& = H^\tau (x, \pttx+\lambda_+(t,x)\nu(x)).
\end{align*}
So, if $\hat v\in \R^n$, with $\langle \hat v, \nu(x)\rangle=0$, is a maximum point for the envelope formula in \eqref{defHtau} which represents $H^\tau (x, \pttx+\lambda_+(t,x)\nu(x))$, then $\hat v$ is also a maximizer  of \eqref{def.HH}, which gives  $H(x, \pttx+\lambda_+(t,x)\nu(x))$. By the uniform convexity of  $H$, this fact yields
$$
\hat v = -D_pH(x, \pttx+\lambda_+(t,x)\nu(x)). 
$$
So,
$$
0= \langle \hat v, \nu(x)\rangle  =- \langle D_pH(x, \pttx+\lambda_+(t,x)\nu(x)), \nu(x)\rangle,
$$
which proves that $\lambda_+(t,x)$ also  minimizes the strictly convex map $\lambda \mapsto H(x, \pttx+\lambda \nu(t,x))$. This shows that $\overline{\lambda}= \lambda_+(t,x)$ thus completing the proof.
\end{proof}
\begin{remark}\label{rem.jeazl} From the above proof it follows that, for $(t,x)\in \partial Q_m$,  $\lambda_+(t,x)$ can be characterized as the unique $\lambda \in \R$ such that the vector $- D_pH(x,\pttx+\lambda \nu(x))$ is tangent to $\Omega$ at $x$, i.e., such that 
	$$
	\langle -D_pH(x,\pttx+\lambda \nu(x)), \nu(x)\rangle =0.
	$$ 
\end{remark}
\subsection{The continuity equation}
The main result of this section is the following theorem.
\begin{theorem}\label{t46bis}
	Let $\Omega$ be a bounded open subset of $\R^n$ with $C^2$ boundary. Let $H$ and $F$ satisfy hypotheses $(H0)-(H3)$ and $(D1)-(D3)$, respectively. Let $m_0\in \mathcal{P}(\overline{\Omega})$ and let $(u,m)$ be a mild solution of the constrained MFG problem in $\overline{\Omega}$. Then, there exists  a bounded  continuous map $\v:(0,T)\times \overline{\Omega}\rightarrow \R^n$ such that $m$ is a solution in the sense of distribution of the continuity equation
	\begin{align}\label{eccbis}
	\begin{cases}
	\partial_t m+ div(\v \ m)=0, \ \ &\mbox{in} \ (0,T)\times \overline{\Omega},\\
	m(0,x)=m_0(x), \ \ \ &\mbox{in} \ \overline{\Omega}.
	\end{cases}
	\end{align}
that is, for all $\phi\in C^1_c((0,T)\times\overline{\Omega})$ one has that
\begin{equation*}
	0=\int_0^T\int_{\overline{\Omega}} \Big[\partial_t\phi(t,x)+\langle D\phi(t,x),\v(t,x)\rangle\Big]m(t,dx)\,dt.
	\end{equation*}
	Moreover, $\v$ is given on $supp(m)$ by
	\begin{align}\label{campobis}
	\v(t,x)=
	\begin{cases}
	-D_pH\big(x,Du(t,x)\big)\ \ \ \  \ &\mbox{if} \ (t,x)\in Q_m,\\
	-D_pH\big(x,\pttx+\cp(t,x)\nu(x)\big) &\mbox{if} \ (t,x)\in \partial Q_m,
	\end{cases}
	\end{align} 
	where $Q_m$ and $\partial Q_m$ are defined in \eqref{defQm}, whereas $\pttx$ and $\cp(t,x)$ are given in \eqref{pt1} and \eqref{dn1}, respectively.
\end{theorem}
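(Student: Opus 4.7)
The plan is to construct the vector field $V$ on $\mathrm{supp}(m)$ using the formula \eqref{campobis}, verify that it is well defined and continuous there, extend it continuously to all of $(0,T)\times\overline{\Omega}$, and then derive the continuity equation by transporting the problem to the Lagrangian side via the push-forward relation $m(t)=e_t\sharp \eta$ provided by Definition \ref{def.mildsol}.

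\textbf{Step 1 (Well-posedness and continuity of $V$ on $\mathrm{supp}(m)$).} At each $(t,x)\in Q_m$, Proposition \ref{t6} gives that $u$ is differentiable, so the first branch of \eqref{campobis} unambiguously defines a continuous map on $Q_m$ by continuity of $D_pH$ and $Du$ (the latter following from Theorem \ref{uc1}(i) and Corollary \ref{c41}). At $(t,x)\in \partial Q_m$, the tangential component $D^\tau_xu(t,x)$ is single-valued by Proposition \ref{rmax}(b) and $\lambda_+(t,x)$ is defined by \eqref{dn1}, so the second branch of \eqref{campobis} is well defined. Theorem \ref{uc1}(iii) gives continuity of $(\partial_t u, D^\tau_x u+\lambda_+\nu)$ along sequences in $\partial Q_m$, while Theorem \ref{uc1}(ii) ensures compatibility along sequences in $Q_m$ converging to a boundary point. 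Together these show that $V$ is continuous on $\mathrm{supp}(m)=Q_m\cup \partial Q_m$. Boundedness follows from the uniform Lipschitz estimate \eqref{l0} for optimal trajectories combined with Corollary \ref{coro.hajebzfd}, which identifies $V(t,x)$ as the velocity $\dot\gamma(t)$ of any optimal trajectory passing through $(t,x)$.

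\textbf{Step 2 (Extension to $(0,T)\times\overline{\Omega}$).} Since $\mathrm{supp}(m)$ is a closed subset of $(0,T)\times\overline{\Omega}$ and $V$ is a bounded continuous $\R^n$-valued map on it, the Tietze--Dugundji extension theorem (componentwise) provides a bounded continuous extension, still denoted $V$, to all of $(0,T)\times\overline{\Omega}$. The distributional continuity equation only charges the support of $m$, so the choice of extension will be irrelevant.

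\textbf{Step 3 (Verifying the continuity equation).} Fix a test function $\phi\in C^1_c((0,T)\times\overline{\Omega})$. Using $m(t)=e_t\sharp\eta$ and Fubini,
\begin{align*}
\int_0^T\!\!\int_{\overline{\Omega}}\bigl[\partial_t\phi(t,x)+\langle D\phi(t,x),V(t,x)\rangle\bigr]\,m(t,dx)\,dt
= \int_\Gamma\!\int_0^T\!\bigl[\partial_t\phi(t,\gamma(t))+\langle D\phi(t,\gamma(t)),V(t,\gamma(t))\rangle\bigr]dt\,d\eta(\gamma).
\end{align*}
Because $\eta$ is a constrained MFG equilibrium, $\eta$-a.e.\ $\gamma$ is optimal for the associated problem, hence by Corollary \ref{coro.hajebzfd} one has $\dot\gamma(t)=V(t,\gamma(t))$ for every $t\in[0,T]$. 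Then the chain rule (valid since $\gamma\in C^{1,1}$ and $\phi\in C^1$) gives $\frac{d}{dt}\phi(t,\gamma(t))=\partial_t\phi(t,\gamma(t))+\langle D\phi(t,\gamma(t)),\dot\gamma(t)\rangle$, so the inner integral equals $\phi(T,\gamma(T))-\phi(0,\gamma(0))$, which vanishes by the compact support of $\phi$. This proves the distributional continuity equation on $(0,T)\times\overline{\Omega}$. The initial condition $m(0)=m_0$ holds by definition of $\mathcal P_{m_0}(\Gamma)$ since $e_0\sharp\eta=m_0$.

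\textbf{Main obstacle.} The delicate point is Step~1: proving that the formula \eqref{campobis} yields a genuinely continuous vector field across the ``interface'' between $Q_m$ and $\partial Q_m$. This is exactly where the fractional semiconcavity of Corollary \ref{cor1} is indispensable, since it underwrites the rigidity results of Theorem \ref{uc1}---in particular \eqref{ds} and \eqref{dsbis}---which force any limit of $Du(s_k,y_k)$ at a boundary point $(t,x)\in\partial Q_m$ to be precisely $D^\tau_xu(t,x)+\lambda_+(t,x)\nu(x)$, rather than some smaller element of the superdifferential. Without this selection principle, $V$ would only be upper semicontinuous as a set-valued map, and distributional uniqueness of the flow would fail.
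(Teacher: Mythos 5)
Your proposal is correct and follows essentially the same approach as the paper: define $V$ on $\mathrm{supp}(m)\cap((0,T)\times\overline\Omega)=Q_m\cup\partial Q_m$ by \eqref{campobis}, use Theorem \ref{uc1} to get continuity of $V$ there, extend by Tietze, and verify the distributional equation by pulling back to the Lagrangian formulation via $m(t)=e_t\sharp\eta$ together with the identity $\dot\gamma(t)=V(t,\gamma(t))$ from Corollary \ref{coro.hajebzfd}. The paper writes the final step as a time-derivative computation of $t\mapsto\int\phi(t,\cdot)\,dm(t)$ followed by integration over $[0,T]$, while you invoke Fubini directly; these are the same argument. Your ``main obstacle'' paragraph correctly identifies why the fractional semiconcavity and the rigidity of Theorem \ref{uc1} are the substance of the theorem.
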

\begin{proof} Let us define $\v$ on $supp(m)$ by \eqref{campobis}. By Theorem \ref{uc1} $\v$ is continuous on the set $Q_m\cup\partial Q_m$. Since $Q_m\cup\partial Q_m$ is relatively closed in $(0,T)\times \overline{\Omega}$, using the Tietze extension theorem (\cite[Theorem 5.1]{d1}) we can extend $\v$ continuously to $(0,T)\times \overline{\Omega}$. It remains to check that \eqref{eccbis} holds. Let $\eta$ be a constrained MFG equilibrium associated with $(u,m)$. Then, by the definition of $Q_m$ and $\partial Q_m$, recalling Corollary \ref{coro.hajebzfd} we have that $(t,\gamma(t))\in Q_m\cup \partial Q_m$ and $\dot\gamma(t)= \v(t,\gamma(t))$
for any $t\in (0,T)$ and $\eta-$a.e. $\gamma\in \Gamma$. So, for any $\phi\in C^1_c((0,T)\times\overline{\Omega})$, one has that
\begin{align*}
\frac{d}{dt} \int_{\overline{\Omega}} \phi(t,x)m(t,dx) & = \frac{d}{dt} \int_{\Gamma} \phi(t,\gamma(t)))\eta(d\gamma)
 = \int_{\Gamma} (\partial_t\phi(t,\gamma(t))+ \langle D\phi(t,\gamma(t)), \dot \gamma(t)\rangle) \eta(d\gamma) \\
& = \int_{\Gamma} (\partial_t\phi(t,\gamma(t))+ \langle D\phi(t,\gamma(t)), \v(t, \gamma(t))\rangle) \eta(d\gamma) \\
& = \int_{\overline{\Omega}}  (\partial_t\phi(t,x)+ \langle D\phi(t,x), \v(t, x)\rangle m(t, dx).
\end{align*}
The conclusion follows by integrating the above identity over $[0,T]$.
\end{proof}
\section{Appendix: proof of Lemma \ref{ax5}}
\subsection{Proof of Proposition \ref{17}}
The proof of Proposition \ref{17} relies on the following  technical lemma.
\begin{lemma}
	Let $w:(0,+\infty)\rightarrow [0,+\infty)$ be an upper semicontinuous function such that $\lim_{r\rightarrow 0} w(r)=0$. Then there exists a continuous nondecreasing function $w_1:[0,+\infty)\rightarrow [0,+\infty)$ such that
	\begin{enumerate}
		\item[(i)] $w_1(r)\rightarrow 0$ as $r\rightarrow 0$,
		\item[(ii)] $w(r)\leq w_1(r)$ for any $r\geq 0$,
		\item[(iii)] the function $\xi(r):=r w_1(r)$ is in $C^1([0,+\infty))$ and satisfies $\dot{\xi}(0)=0$.
	\end{enumerate}
\end{lemma}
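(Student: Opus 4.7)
The plan is to construct $w_1$ in three successive steps: first replace $w$ by its nondecreasing envelope, then smooth it by an averaging operation that preserves monotonicity and domination, and finally iterate the smoothing to gain the extra regularity needed for $\xi(r)=rw_1(r)$ to be $C^1$ up to $0$. Roughly speaking, one application of sliding averages makes the function continuous, and a second application makes the product with $r$ continuously differentiable.

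First I would define
\[
\tilde w(r):=\sup_{0<s\leq r} w(s),\qquad r>0.
\]
Since $w$ is upper semicontinuous on $(0,+\infty)$ and $w(r)\to 0$ as $r\to 0^+$, the function $w$ is locally bounded, so $\tilde w$ is finite. By construction $\tilde w$ is nondecreasing, $\tilde w\geq w$, and $\tilde w(r)\to 0$ as $r\to 0^+$. Next I would set
\[
v(r):=\frac{1}{r}\int_r^{2r}\tilde w(s)\,ds=\int_1^2\tilde w(rt)\,dt,\qquad r>0,
\]
and $v(0):=0$. The change of variables $s=rt$ shows that $v$ is nondecreasing (since $\tilde w$ is), while continuity on $(0,+\infty)$ follows from dominated convergence together with the local boundedness of $\tilde w$; continuity at $0$ follows from $v(r)\leq \tilde w(2r)\to 0$. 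Finally, since $\tilde w(s)\geq \tilde w(r)$ on $[r,2r]$, one has $v\geq \tilde w\geq w$.

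Now I would iterate the same construction by setting
\[
w_1(r):=\frac{1}{r}\int_r^{2r}v(s)\,ds,\qquad r>0,\qquad w_1(0):=0.
\]
Exactly as before, $w_1$ is continuous, nondecreasing, and satisfies $w_1\geq v\geq w$, which gives (i) and (ii). The key point is that
\[
\xi(r)=r\,w_1(r)=\int_r^{2r}v(s)\,ds
\]
is now continuously differentiable on $(0,+\infty)$ because $v$ is continuous, and explicitly
\[
\dot\xi(r)=2v(2r)-v(r).
\]
Since $v(r)\to 0$ as $r\to 0^+$, the right-hand side tends to $0$, and a direct computation gives $\xi(r)/r=w_1(r)\to 0$, so the derivative at $0$ exists and equals $0$; combined with the continuity of $\dot\xi$ on $(0,+\infty)$, this shows $\xi\in C^1([0,+\infty))$ with $\dot\xi(0)=0$, i.e.\ (iii).

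The only real obstacle is condition (iii): a single averaging step would leave $\xi'=2\tilde w(2r)-\tilde w(r)$, which need not be continuous since $\tilde w$ is merely nondecreasing. The double-averaging trick is precisely what upgrades $v$ to a continuous function and hence $\xi$ to a $C^1$ function; checking that $\dot\xi(0)=0$ is then automatic from $w_1(r)\to 0$.
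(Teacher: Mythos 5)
Your construction is exactly the one in the paper: take the nondecreasing envelope $\sup_{0<s\le r}w(s)$, then apply the sliding average $\frac{1}{r}\int_r^{2r}$ twice, so that $\xi(r)=\int_r^{2r}v(s)\,ds$ has the continuous derivative $\dot\xi(r)=2v(2r)-v(r)\to 0$. The argument is correct and matches the paper's proof step for step.
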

\begin{proof}
	Let us first set
	\begin{equation*}
	\overline{w}(r)=\max_{\rho\in(0,r]} w(\rho).
	\end{equation*}
	Then $\overline{w}$ is nondecreasing, not smaller than $w$, and tends to $0$ as $r \rightarrow 0$. Next, we define for $r>0$
	\begin{equation*}
	w_0(r)=\frac{1}{r}\int_r^{2r}\overline{w}(\rho)\,d\rho, \ \ \ \ \ w_1(r)=\frac{1}{r} \int_r^{2r} w_0(\rho)\,d\rho,
	\end{equation*}
	and so we set $w_1(0)=0$. We first observe that, since $\overline{w}$ is nondecreasing, the same holds for $w_0$ and $w_1$. Then we have that $w(r)\leq w(r_0)\leq \overline{w}(2r)$, and so $w_0(r)\rightarrow 0$ as $r \rightarrow 0$. Arguing in the same way with $w_1$ we deduce that properties $(i)$ and $(ii)$ hold. To prove $(iii)$, let us set $\xi(r)=rw_1(r)$. Then $\xi\in C^1((0+\infty))$ with derivate $\dot{\xi}(r)=2w_0(2r)-w_0(r)$. Thus $\dot{\xi}(r)\rightarrow 0$ as $r\rightarrow 0$ and so $\xi$ in $C^1$ in the closed half-line $[0,+\infty)$.
\end{proof}
\begin{proof}[Proof of Proposition \ref{17}]
	The implications $(b)\Longrightarrow (c)$ and $(c)\Longrightarrow (a)$ are obvious; so it is enough to prove that $(a)$ implies $(b)$. Given $p\in D^+u(x)$, let us define, for $r>0$,
	\begin{equation}
	w(r)=\max_{\tiny\begin{array}{c}
		y\in \overline{\Omega}\\
		y: |y-x|\leq r
		\end{array}}
	\Big[\frac{u(y)-u(x)-\langle p,y-x\rangle}{|y-x|}\Big]_+,
	\end{equation}
	where $[\cdot]_+$ denotes the positive part. The function $w$ is continuous and tends to $0$ as $r \rightarrow 0$, by the definition of $D^+u$. Let $w_1$ be the function given by the previous lemma. Then, setting 
	$$
	\phi(y)=u(x)+\langle p,y-x\rangle +|y-x|w_1(|y-x|),
	$$
	we have that $\phi\in C^1(\R^n)$ and touches $u$ from above at $x$.
\end{proof}

The idea of the proof is based on \cite[Theorem 4.5]{7}.
Let $x\in\partial \Omega$ and let $\nu(x)$ be the outward unit normal  to $\partial\Omega$ in $x$. Let $\theta\in \R^n$ be such that $\langle \theta,\nu(x)\rangle\leq0$.
Let us set 
\begin{equation*}
M(\theta,x)=\min_{ p\in D^+u(x)}\langle p,\theta\rangle.
\end{equation*}
It suffices to prove that
\begin{align}\label{ax}
\limsup_{\tiny\begin{array}{c}
	h\rightarrow 0^+\\
	\theta'\rightarrow \theta\\
	x+h\theta'\in \overline{\Omega}
	\end{array}} \frac{u(x+h\theta')-u(x)}{h}\leq M(\theta,x)\leq \liminf_{\tiny\begin{array}{c}
	h\rightarrow 0^+\\
	\theta'\rightarrow \theta\\
	x+h\theta'\in \overline{\Omega}
	\end{array}}  \frac{u(x+h\theta')-u(x)}{h}.
\end{align}
The first inequality in \eqref{ax} is straightforward. Indeed, for any $p\in D^+u(x)$,
\begin{equation*}
\limsup_{\tiny\begin{array}{c}
	h\rightarrow 0^+\\
	\theta'\rightarrow \theta\\
	x+h\theta'\in \overline{\Omega}
	\end{array}} \frac{u(x+h\theta')-u(x) - \langle p,h\theta'\rangle}{h}\leq 0.
\end{equation*}
So,
\begin{equation*}
\limsup_{\tiny\begin{array}{c}
	h\rightarrow 0^+\\
	\theta'\rightarrow \theta\\
	x+h\theta'\in \overline{\Omega}
	\end{array}}\frac{u(x+h\theta)-u(x)}{h}\leq \langle p,\theta\rangle, \ \ \ \forall \ p\in D^+u(x).
\end{equation*}
In order to prove the last inequality in \eqref{ax}, pick  sequences $h_k\rightarrow 0$ and $\theta_k\rightarrow \theta$ such that $x+h_k\theta_k\in \overline{\Omega}$ and
\begin{equation}\label{ax1}
\lim_{k\rightarrow \infty}\frac{u(x+h_k\theta_k)-u(x)}{h_k}=\liminf_{\tiny\begin{array}{c}
	h\rightarrow 0^+\\
	\theta'\rightarrow \theta\\
	x+h\theta'\in \overline{\Omega}
	\end{array}}\frac{u(x+h\theta')-u(x)}{h}.
\end{equation}
Let us define
\begin{equation*}
\mathcal{Q}(x,\theta_k)=\Big\{x'\in \Omega : \langle x'-x,\theta_k\rangle>0, |\langle x'-x,\theta_k\rangle\theta_k-(x'-x)|\leq |x'-x|^2\Big\}.
\end{equation*}
We observe that the interior of $\mathcal{Q}(x,\theta_k)$ is nonempty.
Since $u$ is Lipschitz there exists a sequence $x_k$ such that
\begin{enumerate}
	\item[(i)] $x_k\in \mathcal{Q}(x,\theta_k)$, $x_k\rightarrow x$ as $k\rightarrow \infty$;
	\item[(ii)] $u$ is differentiable at $x_k$ and there exists $\overline{p}\in D^+u(x)$ such that $D u(x_k)\rightarrow \overline{p}$ as $k\rightarrow \infty$;
	\item[(iii)] $|s_k-h_k|\leq h_k^2$, where $s_k=\langle x_k-x,\theta_k\rangle$.  
\end{enumerate}
By the Lipschitz continuity of $u$, we note that $(iii)$ yields
\begin{align*}
&\Big| \frac{u(x+h_k\theta_k)-u(x)}{h_k}-\frac{u(x+s_k\theta_k)-u(x)}{s_k}\Big|\leq \frac{|u(x+h_k\theta_k)-u(x+s_k\theta_k)|}{h_k}\\
&+\left|\frac{1}{h_k}-\frac{1}{s_k}\right|\big[|u(x+s_k\theta_k)-u(x)|\big]\leq 2\L(u) h_k.
\end{align*}
So, by \eqref{ax1} we have that
\begin{equation}\label{ax4}
\lim_{k\rightarrow \infty} \frac{u(x+s_k\theta_k)-u(x)}{s_k}=\liminf_{\tiny\begin{array}{c}
	h\rightarrow 0^+\\
	\theta'\rightarrow \theta\\
	x+h\theta'\in \overline{\Omega}
	\end{array}}\frac{u(x+h\theta')-u(x)}{h}.
\end{equation}
Moreover,
\begin{align*}
&u(x+s_k\theta_k)-u(x)=[u(x+s_k\theta_k)-u(x_k)]+ [u(x_k)-u(x)-\langle D u(x_k),x_k-x\rangle]\\
&+\langle D u(x_k),x_k-x-s_k\theta_k\rangle+ \langle s_k D u(x_k),\theta_k\rangle.
\end{align*}
Since $u$ is locally Lipschitz and $x_k\in \mathcal{Q}(x,\theta_k)$, one has that
\begin{align*}
&\big|u(x+s_k\theta_k)-u(x_k)\big|+ \big|\langle D u(x_k),x_k-x-s_k\theta_k\rangle\big|\leq 2\L(u)\big|x_k-x-s_k\theta_k\big|\\
&	\leq 2\L(u) |x_k-x|^2.
\end{align*}
Since $u$ is semiconcave we deduce that
\begin{equation*}
u(x_k)-u(x)-\langle D u(x_k),x_k-x\rangle\geq -C|x_k-x|\omega(|x_k-x|),
\end{equation*}
for some constant $C>0$.
Therefore
\begin{equation*}
\frac{u(x+s_k\theta_k)-u(x)}{s_k}\geq \langle D u(x_k),\theta_k \rangle - \frac{2\L(u)|x_k-x|^2+C|x_k-x|\omega(|x_k-x|)}{s_k}.
\end{equation*}
By the definition of $\mathcal{Q}(x,\theta_k)$ one has that $s_k|\theta_k|\geq |x_k-x|-|x_k-x|^2$, so that, as $x_k\rightarrow x$, $|x_k-x|\leq 2s_k$ for $k$ large enough. Recalling (ii), \eqref{ax4}, and the fact that $\theta_k\rightarrow \theta$, we conclude that
\begin{equation}
\liminf_{\tiny\begin{array}{c}
	h\rightarrow 0^+\\
	\theta'\rightarrow \theta\\
	x+h\theta'\in \overline{\Omega}
	\end{array}}\frac{u(x+h\theta')-u(x)}{h}\geq \langle\overline{p}, \theta\rangle\geq M(\theta,x).
\end{equation}
This completes the proof.


\begin{thebibliography}{abc99xyz} 
	\bibitem{abll} Achdou, Y., Buera, F. J., Lasry, J.-M., Lions, P.-L., and Moll, B. {\it Partial differential equation models in macroeconomics}, Philosophical Transactions of the Royal Society A, 372 (2028):20130397, 2014.
\bibitem{ahll} Achdou, Y., Han, J., Lasry, J.-M., Lions, P.-L., and Moll, B., {\it Heterogeneous agent models in continuous time}, Preprint, 2014.
	
	\bibitem{5}
	Ambrosio, L., Gigli, N., Savare, G., \textsl{Gradient flows in metric spaces and in the space of probability measures. Second edition,} Lectures in Mathematics
	ETH Z\"urich. Birkh\"auser Verlag, Basel, 2008.
	\bibitem{bb}
	Benamou, J. D., Brenier, Y., \textsl{A computational fluid mechanics solution to the Monge-Kantorovich mass transfer problem}, Numer. Math., 84, 375-393, 2000.
	
	\bibitem{bc}
	Benamou, J. D., Carlier, G., \textsl{Augmented Lagrangian Methods for Trasport Optimization, Mean Field Games and Degenerate Elliptic Equations}, J. Opt. Theor. Appl., 167, No. 1, 1-26, 2015.
	\bibitem{bcs}
	Benamou, J. D., Carlier, G., Santambrogio, F., \textsl{Variational Mean Field Games}, In: Bellomo N., Degond P., Tadmor E. (eds) Active Particles, Vol 1, Modeling and Simulation in Science, Engineering and Technology. Birkh\"auser, 141-171, 2017.
	\bibitem{b}
	Brenier, Y., \textsl{Minimal geodesics on groups of volume-preserving maps and generalized solutions of the Euler equations}, Comm. Pure Appl. Math., 52, No. 4, 411-452, 1999.
	\bibitem{cc}
	Cannarsa, P., Capuani, R., \textsl{Existence and uniqueness for Mean Field Games with state constraints}, http://arxiv.org/abs/1711.01063, 2017.
	\bibitem{ccc} Cannarsa, P., Capuani, R., and Cardaliaguet, P., \textsl{$\bf{C^{1,1}}$--smoothness of constrained solutions in the calculus of variations with application to mean field games.} arXiv preprint arXiv:1806.08966, 2018.
	\bibitem{cs}
		Cannarsa, P. and Sinestrari, C. ~
		\textsl{Semiconcave functions, Hamilton-Jacobi Equations and optimal control ,}
	  	Progress in Nonlinear Differential Equations and their Applications, 58. Birkh\"auser Boston Inc.,Boston, MA, 2004.
	  	\bibitem{7}
	  	Cannarsa, P. and Soner, H. M.,
	  	\textsl{On the singularities of the viscosity solutions to Hamilton-Jacobi-Bellman equation,} Indiana Univ. Math. JI 36, 501, 1987. 
		\bibitem{cdl} Capuzzo-Dolcetta, I., and Lions, P. L. {\it Hamilton-Jacobi equations with state constraints.} Transactions of the American mathematical society, 318(2), 643-683, 1990.	
		\bibitem{cm} Cardaliaguet, P., and Marchi, C. {\it Regularity of the eikonal equation with Neumann boundary conditions in the plane: application to fronts with nonlocal terms} SIAM journal on control and optimization, 45(3), 1017-1038, 2006.
	\bibitem{pc}
	  	Cardaliaguet, P., \textsl{Weak solutions for first order mean field games with local coupling. In Analysis and geometry in
	  		control theory and its applications}, Vol 11 of Springer INdAM Ser., pages 111-158, Springer, Cham, 2015.
	  	\bibitem{cms}
	  	Cardaliaguet, P., Mészáros, A. R., Santambrogio, F., \textsl{First order mean field games with density constraints: pressure equals price}, SIAM J. Control Optim., 54(5):2672-2709, 2016.
	\bibitem{cfsw} Carlini, E., Festa, A., Silva, F. J., and Wolfram, M.-T., {\it A semi-lagrangian scheme for a modified version of the Hughes' model for pedestrian flow}, Dynamic Games and Applications, 7(4):683-705, 2017.
		\bibitem{mar}
	  	Cavagnari G., Marigonda A., Piccoli B., \textsl{Superposition Principle for Differential Inclusions}, In: Lirkov I., Margenov S. (eds) Large-Scale Scientific Computing, LSSC 
	  	2017, Lecture Notes in Computer Science, Vol 10665. Springer, Cham, pp. 201-209, 2018.
	  \bibitem{c}
	  Cesari, L., \textsl{Optimization-Theory and Applications. Problems with Ordinary Differential Equations}, Applications of Mathematics, Vol 17, Springer-Verlag, New York, 1983.
	\bibitem{cpt} Cristiani, E., Priuli, F. S., and Tosin, A., {\it Modeling rationality to control self-organization of crowds: an environmental approach}, SIAM Journal on Applied Mathematics, 75(2):605-629, 2015.
	\bibitem{d1}
	Dugundji, J., \textsl{Topology}, Allyn and Bacon, Inc. Boston, 1966.	
	\bibitem{Gu}  Guerand, J. {\it Flux-limited solutions and state constraints for quasi-convex Hamilton Jacobi equations in multidimensional domains, } Nonlinear Analysis, 162, 162-177, 2017.
	
	\bibitem{h1}
	Huang, M., Caines, P.E., and Malham\'{e}, R.P., \textsl{Large-population cost-coupled LQG problems with nonuniform agents: Individual-mass behavior and decentralized $\epsilon$-Nash equilibria},
	Automatic Control, IEEE Transactions on 52, no. 9, 1560-1571, 2007.
	\bibitem{h2}
	Huang, M., Malham\'{e}, R.P., Caines, P.E., \textsl{Large population stochastic dynamic games: closed-loop McKean-Vlasov systems and the Nash certainly equivalence principle,}
	Communication in information and systems Vol 6, no. 3, pp. 221-252, 2006.
	\bibitem{ik} Ishii, H., and Koike, S., {\it A new formulation of state constraint problems for first-order PDEs,} SIAM Journal on Control and Optimization, 34(2), 554-571, 1996.
\bibitem{IM} Imbert, C., and Monneau, R., {\it Flux-limited solutions for quasi-convex Hamilton-Jacobi equations on networks,} Annales scientifiques de l?ENS, to appear, 2016.
\bibitem{8}
	Lasry, J.-M., Lions, P.-L., \textsl{Jeux \`{a} champ moyen. I. Le cas stationnaire}, C. R. Math. Acad. Sci. Paris 343, no. 9, 619-625, 2006.
	%
	\bibitem{9}
	Lasry, J.-M., Lions, P.-L., \textsl{Jeux \`{a} champ moyen. II. Horizon fini et contr\^{o}le optimal}, C. R. Math. Acad. Sci. Paris 343, no. 9, 679-684, 2006.
	\bibitem{10}
	Lasry, J.-M., Lions, P.-L., \textsl{Mean field games}, Jpn. J. Math. 2, no.1, 229-260, 2007.
		\bibitem{So1} Soner, H. M., {\it Optimal control with state-space constraint I}, SIAM Journal on Control and Optimization,
24(3):552-561, 1986.
	\bibitem{So2} Soner, H. M. {\it Optimal control with state-space constraint. II}. SIAM journal on control and optimization, 24(6), 1110-1122, 1986.
	
\end{thebibliography}
\end{document}